\documentclass[11pt,reqno]{amsart}
\usepackage[a4paper,left=2.5cm,right=2.5cm,top=3cm,bottom=2cm]{geometry}
\usepackage{amsmath,amssymb,amsthm,mathtools}
\usepackage{enumitem}
\usepackage{mathrsfs}
\usepackage{tikz-cd}
\usepackage{tikz}
\usepackage{longtable}
\usepackage{makecell}
\usepackage{array,tabularx}
\usepackage{graphicx}
\usepackage{pdflscape}
\usepackage{float}
\usepackage{subcaption}
\usepackage{diagbox}
\usetikzlibrary{decorations.pathreplacing,calc}
\usepackage[colorlinks=true, pdfstartview=FitV, linkcolor=blue,%
citecolor=blue, urlcolor=blue]{hyperref}
\newcolumntype{Y}{>{\centering\arraybackslash}X}
\numberwithin{table}{section}
\numberwithin{equation}{section}
\newtheorem{theorem}{Theorem}[section]
\newtheorem{proposition}[theorem]{Proposition}
\newtheorem{lemma}[theorem]{Lemma}
\newtheorem{corollary}[theorem]{Corollary}
\theoremstyle{definition}
\newtheorem{definition}[theorem]{Definition}
\newtheorem{example}[theorem]{Example}
\newtheorem{remark}[theorem]{Remark}

\newcommand{\Z}{\mathbb Z}
\newcommand{\Q}{\mathbb Q}
\newcommand{\wt}{\operatorname{wt}}

\newcommand{\ol}{\overline}

\newcommand{\cl}{\mathrm{cl}}
\newcommand{\ch}{\mathrm{ch}}
\newcommand{\Hom}{\mathrm{Hom}}
\newcommand{\veps}{\varepsilon}
\newcommand{\vphi}{\varphi}
\newcommand{\et}[1]{\widetilde e_{#1}}
\newcommand{\ft}[1]{\widetilde f_{#1}}

\newcommand{\ep}{\varepsilon}

\newcommand{\qfrak}{\mathfrak q}

\allowdisplaybreaks
\title[Closed formulas for energy functions on tensor squares of perfect crystals]{Closed formulas for energy functions on tensor\\ squares of higher-level perfect crystals in\\ classical affine types}
\author{Shaolong Han}
\address{Beijing International Center for Mathematical Research, Peking University, Beijing 100871, P.R. China}
\email{hanshaolong@bicmr.pku.edu.cn}
\subjclass[2020]{17B37; 05E10, 17B67, 05A19}
\keywords{perfect crystals, energy functions, character formulas, Rogers--Ramanujan-type identities}
\begin{document}

\begin{abstract}
	For every \(l\geq1\), we construct explicit closed-form coordinate formulas for the local energy functions on the tensor products \(B_l\otimes B_l\) of level-\(l\) perfect crystals in classical affine types. 
	A single finite-level max-linear formula covers all seven types: its two branches coincide in type \(A_n^{(1)}\), yielding a cyclic maximum of partial sums, whereas in the remaining six types each branch is the maximum of finitely many explicit piecewise-linear expressions in barred coordinates, with type-dependent boundary data.
	We verify the defining local-energy recursion directly on
	the finite crystals and derive equivalent recursive forms, allowing the energy to be evaluated without applying the combinatorial \(R\)-matrix.
	Substitution into the KMN path character formula gives explicit positive coordinate path sums for the characters of all level-\(l\) integrable highest weight modules. After principal specialization, the path exponent can be rewritten as a weighted sum of a position-independent adjacent-pair statistic; comparison with the specialized Weyl--Kac character formula yields a uniform family of level-\(l\) Rogers--Ramanujan-type identities equating these sums with explicit infinite products. For a representative low-rank case at level two in each family, we display the complete adjacent-pair degree matrix and list the resulting identities.
\end{abstract}

\maketitle

\tableofcontents
\section*{Introduction}

Perfect crystals provide finite combinatorial models for integrable highest weight representations of quantum affine algebras. For a perfect crystal of level \(l\) and a dominant weight of the same level, the Kyoto path realization identifies the corresponding highest weight crystal with the set of semi-infinite paths whose tails agree with a prescribed ground-state path \cite{KMN1,KMN2}. The classical weight of a path is determined by the weights of its constituent factors, whereas its affine degree is governed by the local energies associated with adjacent pairs.
Local energy is therefore the finite datum connecting perfect crystals, affine path degrees, and character formulas, and its explicit evaluation is essential for making the positive path-sum character formula computationally effective. 
Perfect crystals for the classical affine types were applied to the study of Demazure modules in \cite{KMOTU98}.

\vskip 2mm

Energy functions arose in the study of solvable lattice models, where the corner transfer matrix method expressed local state probabilities in terms of one-dimensional configuration sums whose normalized limits were identified with affine Lie algebra string functions \cite{DJKMO89}.
In type \(A_{n-1}^{(1)}\), Nakayashiki and Yamada expressed the Lascoux--Sch\"utzenberger charge in terms of sums of local energies, yielding energy-sum formulas for Kostka polynomials and a limit formula for the \(\widehat{\mathfrak{sl}}_n/\mathfrak{sl}_n\) branching functions \cite{NY97}. Lenart and Schilling identified global energy with the negative of the corresponding charge statistic on tensor products of single-column Kirillov--Reshetikhin crystals with weakly decreasing heights in types \(A_{n-1}^{(1)}\) and \(C_n^{(1)}\) \cite{LS13}. Energy functions also play central roles in one-dimensional sums and fermionic formulas \cite{HKOTT02}, virtual-crystal constructions \cite{OSS03}, the \(X=M\) framework \cite{Sch07}, parabolic Lusztig \(q\)-analogues \cite{LOS12}, and the realization of affine Demazure gradings by global energy \cite{ST12}. 
These developments demonstrate the reach of energy functions, but generally take local energy as input or study global statistics built from it. Our problem is to determine the local energy itself directly in finite-level coordinates.

\vskip 2mm

The crystals considered here belong to the broader Kirillov--Reshetikhin crystal framework. 
The existence of Kirillov--Reshetikhin crystals for all nonexceptional affine types was established in \cite{OS08}, and explicit combinatorial realizations were constructed in \cite{FOS09}. 
For a different uniform family of level-one perfect crystals, Benkart, Frenkel, Kang, and Lee gave a construction valid for all affine Lie algebras and determined the corresponding energies from the classical connected components of the tensor square \cite{BFKL06}. 
At finite level, energy can also be computed together with the combinatorial \(R\)-matrix. 
Pairing and piecewise-linear rules are available for symmetric-tensor crystals of type \(A_n^{(1)}\) \cite{HHIKTT01}; insertion and reverse-bumping algorithms for \(B_l\otimes B_k\) were developed for types \(C_n^{(1)}\) and \(A_{2n-1}^{(2)}\) in \cite{HKOT00}, while corresponding column-insertion constructions for types \(B_n^{(1)}\), \(D_n^{(1)}\), \(A_{2n}^{(2)}\), and \(D_{n+1}^{(2)}\) were given in \cite{HKOT02}.
Related constructions for rectangular Kirillov--Reshetikhin crystals appear in \cite{OS10}. 
Although these methods make the energy algorithmically computable and, in several cases, apply to inhomogeneous tensor products \(B_l\otimes B_k\) or more general rectangular Kirillov--Reshetikhin crystals, they do not provide a closed formula for the energy alone that is both expressed in the original finite-level coordinates of the two input factors and uniform across the seven nonexceptional affine families. 
Deriving such a formula for the repeated evaluation of local energy along crystal paths is therefore a distinct problem at finite level.

\vskip 2mm

The closest precursor to our max-linear formulas is the work of Kang,
Kashiwara, and Misra \cite{KKM94}. 
For the seven nonexceptional affine types, they constructed the limit crystals \(B_\infty\) associated with coherent families of perfect crystals and described the corresponding energy functions on \(B_\infty\otimes B_\infty\) as maxima of linear functions. 
A finite crystal \(B_l\), however, is related to \(B_\infty\) through shifted embeddings involving one-element crystals, rather than through a canonical unshifted inclusion. 
Consequently, although the limit-crystal formulas provide natural candidates for finite-level energy, their validity on the original tensor product \(B_l\otimes B_l\) does not follow from a formal restriction: the coordinate shifts, finite coordinate constraints, type-dependent boundary data, and additive normalization must
be taken into account.

\vskip 2mm

We therefore address the finite-level problem of expressing the local energy on the homogeneous tensor products \(B_l\otimes B_l\) directly in the original crystal coordinates, for every \(l\geq1\).
To the best of our knowledge, no such formula has previously been stated and intrinsically verified with the finite boundary data made explicit and all seven families organized within a single max-linear form.

\vskip 2mm

A local energy function is unique up to an additive constant. For every \(l\geq1\), our main theorem fixes a concrete representative on
\(B_l\otimes B_l\) in each of the affine types $A_n^{(1)}$, $A_{2n}^{(2)}$, $D_{n+1}^{(2)}$, $C_n^{(1)}$, $A_{2n-1}^{(2)}$,
$D_n^{(1)}$ and $B_n^{(1)}$.
All seven families are governed by a single finite-level max-linear
formula. 
In type \(A_n^{(1)}\), the two branches of the unified formula coincide, so the energy reduces to a cyclic maximum of partial coordinate sums.
For the remaining six families, the same formula takes a two-branch form, with each branch involving the maximum of a finite collection of explicit linear candidates in the standard barred coordinates. The type dependence is confined to the index sets, coordinate constraints, boundary candidates, root-length scaling factors, and terminal-node behavior; see Theorem~\ref{thm:closed-form-energy-functions}.

\vskip 2mm

We also derive equivalent recursive evaluation formulas involving only addition, subtraction, positive parts, and absolute values; see
Theorem~\ref{thm:unified-max-free-recursive-closed-formulas}. 
These formulas evaluate the energy directly from the two input factors, without first computing the image of the input tensor under the combinatorial \(R\)-matrix. 
The main technical difficulty is to verify the defining local-energy recursion uniformly on the finite crystals. 
At the internal nodes, a dominance argument shows that each linear candidate that can change under a Kashiwara operator is controlled by one that remains unchanged. 
The affine and terminal nodes require separate boundary arguments accounting for the finite coordinate constraints, long- and short-root scaling factors, the parity condition in type \(C_n^{(1)}\), the binary coordinate in types \(B_n^{(1)}\) and \(D_{n+1}^{(2)}\), and the two terminal branches in type \(D_n^{(1)}\). 
Resolving these boundary phenomena within a common framework is the principal technical part of the proof.

\vskip 2mm

The closed energy functions have two applications. First, they give a
coordinate form of the KMN path character formula for every dominant weight of level \(l\). Subtracting the corresponding ground-state local energy from each adjacent contribution removes the additive ambiguity of the energy function. 
Substituting the closed energy functions into the KMN path character formula then expresses the character as an explicit positive sum over coordinate paths that eventually agree with the ground-state path. 
Both the classical weight exponents and the affine degree are given directly by the finite-crystal coordinates. The new point is that
the affine grading in the KMN formula is made explicit as a computable function of the coordinates of adjacent crystal elements; see Proposition~\ref{prop:closed-ch-formula}.

\vskip 2mm

After applying principal specialization to the KMN character formula, a telescoping argument rewrites the resulting path exponent as a weighted sum of an adjacent-pair statistic that depends only on the ordered pair and not on its position in the path.
Consequently, the complete exponent is determined by a finite adjacent-pair matrix. This position-independent local description makes the principally specialized finite-level path sum computationally effective; see Propositions~\ref{prop:adjacent-pair-degree-form}
and~\ref{prop:adjacent-energy-independent-of-k}.

\vskip 2mm

The character-comparison mechanism underlying the second application has direct crystal-theoretic precedents. 
Primc combined the KMN character formula with the principally specialized Weyl--Kac character formula to obtain a colored-partition identity for the basic \(A_2^{(1)}\)-module, with difference conditions determined by the energy of a perfect crystal \cite{Primc99}.
Certain colored-partition bases for basic representations of the
untwisted classical affine Lie algebras were constructed in
\cite{Primc00}, where the difference conditions arising from the
vertex-operator construction were shown to coincide with the energy
functions of the corresponding perfect crystals.
Both works concern basic representations, whereas our finite-level formulas make the path-sum side explicit for every dominant weight of positive level in all classical affine types.

\vskip 2mm
By comparing the principally specialized coordinate path formula with the corresponding Weyl--Kac character formula, we obtain a uniform family of level-\(l\) Rogers--Ramanujan-type identities for all classical affine types; see Theorem~\ref{thm:principal-specialized-path-product-identity} and
Corollary~\ref{cor:level-l-specialization}. 
Here ``Rogers--Ramanujan-type'' refers to an identity equating a positive path sum, graded by an explicit nonnegative integral statistic determined by adjacent pairs, with an explicit infinite product.
The essential new input is the finite-level path-sum side and its grading by position-independent local adjacent-pair contributions. For a representative low-rank case at level two in each family, we compute the complete adjacent-pair degree matrix and list the resulting identities.
Reformulating these path sums as colored-partition generating functions remains a separate problem.

\vskip 2mm
The degree matrices provide natural input for such partition-theoretic reformulations.
Dousse and Konan showed how perfect-crystal paths can be encoded by multi-grounded colored partitions, with suitably normalized local-energy data determining the adjacent-color difference conditions \cite{DK22}. 
Dousse, Hardiman, and Konan subsequently treated \(A_1^{(1)}\) perfect crystals at arbitrary level and derived a family of higher-level partition identities \cite{DHK}.
At level one, the required energy data are often recorded in a small finite table; for \(A_1^{(1)}\), the arbitrary-level case remains tractable because of its rank-one structure.
Our formulas provide analogous finite-level data in higher rank and may support extensions of grounded and multi-grounded
partition constructions to the families considered here.

\vskip 2mm

A further potential application concerns Young wall realizations of highest weight crystals.
Young walls model affine crystals as configurations of colored blocks, with Kashiwara operators acting through local modifications
of columns.
Level-one and higher-level constructions were developed in \cite{KangYW} and \cite{KL06YW,KL09YW}, respectively.
As the level of the dominant weight increases, the descriptions of admissible column configurations, ground-state walls, and compatibility conditions between adjacent columns become increasingly intricate.
In models related to perfect-crystal paths, local energy is a natural candidate for encoding the affine interaction between adjacent columns.
Our closed energy formulas may allow level-dependent local-energy matrices to be replaced by uniform coordinate rules and may help describe compatibility conditions between adjacent columns at higher levels.
Developing this connection, as well as treating unequal-level tensor products and more general rectangular Kirillov--Reshetikhin crystals, is left for future work.

\vskip 2mm

The paper is organized as follows.
Section~\ref{sec:perfect-crystals-quantum-affine-algebras} recalls perfect crystals, tensor products, and local energy functions;
Section~\ref{sec:perfect-crystals-classical-affine} reviews the coordinate realizations of the seven families;
Section~\ref{sec:closed-form-energy-functions} proves the closed and
recursive energy formulas and gives low-rank energy examples;
Section~\ref{sec:effective-character-algorithm} derives the coordinate path character formulas and adjacent-pair statistics; and
Section~\ref{sec:specialized-rogers-ramanujan-type-identities} establishes the Rogers--Ramanujan-type identities obtained by principal specialization, expresses their product sides explicitly as infinite products, and works out a representative low-rank example at level two for each of the seven families.

\section{Perfect crystals of quantum affine algebras}
\label{sec:perfect-crystals-quantum-affine-algebras}
Throughout this paper, let \(I=\{0,1,\dots,n\}\) be the index set of an affine Cartan datum \((A,P,\Pi,P^\vee,\Pi^\vee)\), where
\begin{enumerate}
	\item \(A=(a_{ij})_{i,j\in I}\) is a symmetrizable affine Cartan matrix,
	\item \(P\) is a free abelian group of rank \(n+2\),
	\item \(\Pi=\{\alpha_i\in P\mid i\in I\}\) is a \(\Q\)-linearly independent subset,
	\item \(P^\vee=\Hom_{\Z}(P,\Z)\),
	\item \(\Pi^\vee=\{h_i\in P^\vee\mid i\in I\}\),
\end{enumerate}
subject to the compatibility condition \(\langle h_i,\alpha_j\rangle=a_{ij}\) for \(i,j\in I\).

Set \(\mathfrak h=\Q\otimes_{\Z}P^\vee\). Choose \(d\in P^\vee\) so that \(\alpha_i(d)=\delta_{i0}\) for \(i\in I\). For each \(i\in I\), the fundamental weight \(\Lambda_i\in \mathfrak h^\ast\) is characterized by \(\Lambda_i(h_j)=\delta_{ij}\) and \(\Lambda_i(d)=0\) for \(i,j\in I\).

There exists a nondegenerate symmetric bilinear form \((\ ,\ )\) on
\(\mathfrak h^\ast\), normalized by
\[
r_i:=\frac{(\alpha_i,\alpha_i)}{2}\in\Z_{>0},
\qquad
\langle h_i,\lambda\rangle
=
\frac{2(\alpha_i,\lambda)}{(\alpha_i,\alpha_i)}
\quad (i\in I,\ \lambda\in P).
\]
We also set $Q_+:=\sum_{i=0}^{n}\Z_{\ge0}\alpha_i$.

We refer to \cite{Kac90, HK02} for the general background on affine Cartan data.

Let \(q\) be an indeterminate. 
For \(i\in I\), define
\[
q_i=q^{r_i},
\qquad
[m]_i=\frac{q_i^m-q_i^{-m}}{q_i-q_i^{-1}},
\qquad
[m]_i!=[m]_i[m-1]_i\cdots[1]_i,
\qquad
[0]_i!=1.
\]

\begin{definition}
	The \textit{quantum affine algebra} \(U_q(\mathfrak g)\) associated with
	\((A,P,\Pi,P^\vee,\Pi^\vee)\) is the \(\Q(q)\)-algebra with \(1\) generated by $e_i,\ f_i\ (i\in I),\ q^h\ (h\in P^\vee)$,
	subject to the relations
	\begin{equation*}
		\begin{aligned}
			& q^0=1,\qquad q^h q^{h'}=q^{h+h'} \qquad (h,h'\in P^\vee),\\
			& q^h e_j q^{-h}=q^{\langle h,\alpha_j\rangle}e_j,\\
			& q^h f_j q^{-h}=q^{-\langle h,\alpha_j\rangle}f_j
			\qquad (h\in P^\vee,\ j\in I),\\
			& e_i f_j-f_j e_i=\delta_{ij}\frac{K_i-K_i^{-1}}{q_i-q_i^{-1}},\\
			& \sum_{k=0}^{1-a_{ij}}(-1)^k e_i^{(1-a_{ij}-k)} e_j e_i^{(k)}=0
			\qquad (i\neq j),\\
			& \sum_{k=0}^{1-a_{ij}}(-1)^k f_i^{(1-a_{ij}-k)} f_j f_i^{(k)}=0
			\qquad (i\neq j),
		\end{aligned}
	\end{equation*}
where $K_i=q^{r_i h_i}$, $e_i^{(k)}=\frac{e_i^k}{[k]_i!}$ and $f_i^{(k)}=\frac{f_i^k}{[k]_i!}$.
\end{definition}

The subalgebra of \(U_q(\mathfrak g)\) generated by
\(e_i,\ f_i,\ K_i^{\pm1}\ (i\in I)\) is denoted by \(U_q'(\mathfrak g)\).
It is the derived quantum affine algebra, that is, the quantum affine
algebra without the degree operator \(q^d\).

Let \(c=c_0h_0+\cdots+c_n h_n\) be the canonical central element, and let \(\delta=d_0\alpha_0+\cdots+d_n\alpha_n\) be the null root, where the coefficients \(c_i,d_i\) are as in \cite{Kac90}. Then \(P=\bigoplus_{i\in I}\Z\Lambda_i\oplus \Z\bigl(\frac1{d_0}\delta\bigr)\) and \(P^\vee=\bigoplus_{i\in I}\Z h_i\oplus \Z d\). Note that \(d_0=1\) for all affine types except \(A_{2n}^{(2)}\), for which one has \(d_0=2\).

Let \(\cl:P\longrightarrow P/\Z\bigl(\frac1{d_0}\delta\bigr)\) be the natural projection, and write \(P_{\mathrm{cl}}:=\cl(P)=\bigoplus_{i\in I}\Z\,\cl(\Lambda_i)\).
When working in \(P_{\mathrm{cl}}\), we write \(\Lambda_i\) and \(\alpha_i\)
also for their images \(\cl(\Lambda_i)\) and \(\cl(\alpha_i)\), respectively,
unless confusion may arise.
 We also set
\[
P^+=\{\lambda\in P\mid \langle h_i,\lambda\rangle\ge 0 \text{ for all } i\in I\},
\qquad
P_{\mathrm{cl}}^+=\cl(P^+).
\]
For \(\lambda\in P\), its \textit{level} is defined by \(\langle c,\lambda\rangle\).  Since \(\langle c,\delta\rangle=0\), the pairing
\(\lambda\mapsto\langle c,\lambda\rangle\) descends to
\(P_{\mathrm{cl}}\). Thus the level of an element of
\(P_{\mathrm{cl}}\) is well defined.
For \(l\in\Z_{\geq0}\), set
\[
(P_{\mathrm{cl}}^+)_l
:=
\{\lambda\in P_{\mathrm{cl}}^+\mid
\langle c,\lambda\rangle=l\}.
\]

\begin{definition}
	A \textit{\(U_q(\mathfrak g)\)-crystal} (resp. a \textit{\(U_q'(\mathfrak g)\)-crystal}) is a set \(B\) equipped with maps
	\[
	\widetilde e_i,\widetilde f_i:B\to B\cup\{0\},
	\qquad
	\varepsilon_i,\varphi_i:B\to \Z\cup\{-\infty\},
	\]
	together with a weight map $\wt:B\to P$ $\bigl(\text{resp. } \wt:B\to P_{\mathrm{cl}}\bigr)$,
	such that the following conditions hold for all \(b\in B\) and \(i\in I\):
	\begin{enumerate}
		\item \(\varphi_i(b)=\varepsilon_i(b)+\langle h_i,\wt(b)\rangle\);
		\item if \(\widetilde e_i b\in B\), then \(\wt(\widetilde e_i b)=\wt(b)+\alpha_i\);
		\item if \(\widetilde f_i b\in B\), then \(\wt(\widetilde f_i b)=\wt(b)-\alpha_i\);
		\item if \(\widetilde e_i b\in B\), then \(\varepsilon_i(\widetilde e_i b)=\varepsilon_i(b)-1\) and \(\varphi_i(\widetilde e_i b)=\varphi_i(b)+1\);
		\item if \(\widetilde f_i b\in B\), then \(\varepsilon_i(\widetilde f_i b)=\varepsilon_i(b)+1\) and \(\varphi_i(\widetilde f_i b)=\varphi_i(b)-1\);
		\item for \(b,b'\in B\), one has \(\widetilde f_i b=b'\) if and only if \(b=\widetilde e_i b'\);
		\item if \(\varphi_i(b)=-\infty\), then \(\widetilde e_i b=\widetilde f_i b=0\).
	\end{enumerate}
\end{definition}

For a \(U_q'(\mathfrak g)\)-crystal \(B\) and \(b\in B\), \(\mu\in P_{\mathrm{cl}}\), set $B_\mu=\{b\in B\mid \wt(b)=\mu\}$. Define
\begin{equation}\label{eq:varepsilon-varphi}
\varepsilon(b)=\sum_{i\in I}\varepsilon_i(b)\,\cl(\Lambda_i),
\qquad
\varphi(b)=\sum_{i\in I}\varphi_i(b)\,\cl(\Lambda_i).
\end{equation}

When no confusion can arise, we simply write \(\Lambda_i\) for \(\cl(\Lambda_i)\), and similarly suppress the distinction between \(\wt\) and its classical version.

If \(B_1\) and \(B_2\) are crystals, then their tensor product \(B_1\otimes B_2=B_1\times B_2\) is endowed with the crystal structure
\begin{equation}\label{eq:tensor-rule}
	\begin{aligned}
		\wt(b_1\otimes b_2)
		&=\wt(b_1)+\wt(b_2),\\
		\varepsilon_i(b_1\otimes b_2)
		&=\max\bigl(\varepsilon_i(b_1),\,
		\varepsilon_i(b_2)-\langle h_i,\wt(b_1)\rangle\bigr),\\
		\varphi_i(b_1\otimes b_2)
		&=\max\bigl(\varphi_i(b_2),\,
		\varphi_i(b_1)+\langle h_i,\wt(b_2)\rangle\bigr),\\
		\widetilde e_i(b_1\otimes b_2)
		&=
		\begin{cases}
			\widetilde e_i b_1\otimes b_2,& \text{if }\varphi_i(b_1)\ge \varepsilon_i(b_2),\\
			b_1\otimes \widetilde e_i b_2,& \text{if }\varphi_i(b_1)< \varepsilon_i(b_2),
		\end{cases}\\
		\widetilde f_i(b_1\otimes b_2)
		&=
		\begin{cases}
			\widetilde f_i b_1\otimes b_2,& \text{if }\varphi_i(b_1)> \varepsilon_i(b_2),\\
			b_1\otimes \widetilde f_i b_2,& \text{if }\varphi_i(b_1)\le \varepsilon_i(b_2).
		\end{cases}
	\end{aligned}
\end{equation}

\begin{definition}\label{def:perfect-crystal}
	Let \(l\) be a positive integer. A finite \(U_q'(\mathfrak g)\)-crystal \(B\) is called a \textit{perfect crystal of level \(l\)} if the following conditions are satisfied:
	\begin{enumerate}
		\item \(B\) is the crystal of a finite-dimensional \(U_q'(\mathfrak g)\)-module;
		\item \(B\otimes B\) is connected;
		\item there exists \(\lambda\in P_{\mathrm{cl}}\) such that $\wt(B)\subset \lambda-\sum_{i\neq 0}\Z_{\ge 0}\alpha_i,
		\ \#B_\lambda=1;$
		\item for every \(b\in B\), one has \(\langle c,\varepsilon(b)\rangle\ge l\);
		\item for each \(\lambda\in(P_{\mathrm{cl}}^+)_l\), there exist unique elements \(b^\lambda,\ b_\lambda\in B\) such that \(\varepsilon(b^\lambda)=\lambda\) and \(\varphi(b_\lambda)=\lambda\).
	\end{enumerate}
\end{definition}

The elements \(b^\lambda\) and \(b_\lambda\) are called the \textit{minimal elements} of \(B\).

\vskip 2mm

\begin{definition}\label{def:energy}
	Let \(B\) be a perfect crystal. An \emph{energy function} on \(B\otimes B\) is a map
	\(H : B \otimes B \rightarrow \Z\) such that, for every \(i \in I\) and
	\(b_1, b_2 \in B\) with \(\widetilde{f}_i(b_1 \otimes b_2) \neq 0\), the following holds:
	\begin{equation}\label{eq:energy}
	\begin{aligned}
		H\bigl(\widetilde{f}_i(b_1 \otimes b_2)\bigr) =
		\begin{cases}
			H(b_1 \otimes b_2), & \text{if } i \neq 0, \\[2pt]
			H(b_1 \otimes b_2) - 1, & \text{if } i = 0 \text{ and } \varphi_0(b_1) > \varepsilon_0(b_2), \\[2pt]
			H(b_1 \otimes b_2) + 1, & \text{if } i = 0 \text{ and } \varphi_0(b_1) \leq \varepsilon_0(b_2).
		\end{cases}
	\end{aligned}
	\end{equation}
\end{definition}

\begin{lemma}
\label{lem:energy-uniqueness-general}
	Let \(B\) be a perfect crystal, and let
	\(H,H':B\otimes B\to \Z\) be two energy functions. Then there exists a constant
	\(C\in\Z\) such that $H'=H+C$
	on \(B\otimes B\).
\end{lemma}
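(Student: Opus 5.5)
We consider the difference \(D:=H'-H:B\otimes B\to\Z\) and show that it is constant. Subtracting the two instances of \eqref{eq:energy}, the right-hand sides differ from \(H(b_1\otimes b_2)\), resp.\ \(H'(b_1\otimes b_2)\), by the same quantity. That quantity is \(0\), \(-1\) or \(+1\), and which one applies depends only on \(i\) and on the comparison of \(\varphi_0(b_1)\) with \(\varepsilon_0(b_2)\). It does not depend on the function. Hence for every \(i\in I\) and every \(b=b_1\otimes b_2\) with \(\widetilde f_i b\neq 0\) we get
\[
D(\widetilde f_i b)=D(b).
\]
Thus \(D\) is invariant along every \(i\)-arrow of the crystal graph of \(B\otimes B\), for all \(i\in I\) including \(i=0\).

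Next we pass from invariance along arrows to invariance on connected components. By axiom (6) of a crystal, \(\widetilde e_i b'=b\) holds exactly when \(\widetilde f_i b=b'\). So \(D\) is also invariant under every \(\widetilde e_i\) that does not vanish. Consequently \(D\) is constant along any path in the underlying undirected graph with edges \(b\to\widetilde f_i b\). In other words, \(D\) is constant on each connected component of \(B\otimes B\).

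Finally, condition (2) of Definition~\ref{def:perfect-crystal} says that \(B\otimes B\) is connected. Therefore \(D\) takes a single value \(C\in\Z\), and \(H'=H+C\). One point needs care: connectedness must be understood for the full \(I\)-crystal graph with the affine operators \(\widetilde e_0,\widetilde f_0\) included. That is exactly the notion in the definition of perfectness, and it is why the \(i=0\) case of the recursion matters. I do not expect any real obstacle here; the argument rests entirely on connectedness being part of the perfectness hypothesis.
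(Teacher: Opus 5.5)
Your proof is correct and follows the same route as the paper: the difference $H'-H$ is invariant along every $\widetilde f_i$-arrow, including $i=0$, because both functions change by the same amount, and the connectedness of $B\otimes B$ required by perfectness then forces the difference to be constant. Your explicit appeal to crystal axiom (6), which lets you pass to the undirected crystal graph, fills in a step the paper leaves implicit.
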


\begin{proof}
	Set \(d:=H'-H\). We show that \(d\) is constant on \(B\otimes B\).
	
	Let \(x=b_1\otimes b_2\in B\otimes B\), and suppose that
	\(\widetilde f_i x\ne0\). If \(i\ne0\), then we have $H(\widetilde f_i x)=H(x)$ and $H'(\widetilde f_i x)=H'(x)$,
	and hence \(d(\widetilde f_i x)=d(x)\).
	
	If \(i=0\), then by Definition~\ref{def:energy}, both \(H\) and \(H'\)
	change by the same amount, namely by \(-1\) when
	\(\varphi_0(b_1)>\varepsilon_0(b_2)\), and by \(+1\) when
	\(\varphi_0(b_1)\le \varepsilon_0(b_2)\). Hence again $d(\widetilde f_0 x)=d(x)$.
	
 Since \(B\otimes B\) is connected, \(d\) is constant on
	\(B\otimes B\). Therefore \(d= C\) for some \(C\in\Z\), and so $H'=H+C$.
\end{proof}

\begin{corollary}\label{cor:energy-equality}
	Let \(B\) be a perfect crystal of level \(l\), let
	\(\lambda\in(P_{\mathrm{cl}}^+)_l\), and let \(H,H'\) be energy functions
	on \(B\otimes B\). If $H(b^\lambda\otimes b_\lambda)
	=
	H'(b^\lambda\otimes b_\lambda)$,
	then \(H=H'\) on \(B\otimes B\).
\end{corollary}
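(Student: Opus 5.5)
This follows directly from Lemma~\ref{lem:energy-uniqueness-general}. Since \(H\) and \(H'\) are both energy functions on \(B\otimes B\), and \(B\) is perfect (so \(B\otimes B\) is connected by Definition~\ref{def:perfect-crystal}(2)), the lemma gives a constant \(C\in\Z\) with \(H'=H+C\) on all of \(B\otimes B\). The only remaining task is to show that \(C=0\).

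The point to check is that \(b^\lambda\otimes b_\lambda\) is a well-defined element of \(B\otimes B\). By Definition~\ref{def:perfect-crystal}(5), for the given \(\lambda\in(P_{\mathrm{cl}}^+)_l\) there exist unique elements \(b^\lambda,b_\lambda\in B\) with \(\varepsilon(b^\lambda)=\lambda\) and \(\varphi(b_\lambda)=\lambda\). Hence the tensor \(b^\lambda\otimes b_\lambda\) is a specific element of \(B\otimes B\), determined by \(\lambda\) alone. The particular choice of this element plays no role beyond existence: any single point of agreement would suffice.

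Evaluating \(H'=H+C\) at this element gives
\[
H'(b^\lambda\otimes b_\lambda)=H(b^\lambda\otimes b_\lambda)+C.
\]
The hypothesis \(H(b^\lambda\otimes b_\lambda)=H'(b^\lambda\otimes b_\lambda)\) then forces \(C=0\), so \(H=H'\) on \(B\otimes B\).

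There is no real obstacle here. All of the substance, namely the use of connectivity to propagate the equality \(d(\widetilde f_i x)=d(x)\), is already contained in Lemma~\ref{lem:energy-uniqueness-general}.
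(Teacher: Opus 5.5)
Your proposal is correct and takes the same approach as the paper, whose proof simply invokes Lemma~\ref{lem:energy-uniqueness-general} to obtain $H'=H+C$ and then uses the agreement at $b^\lambda\otimes b_\lambda$ to force $C=0$. Your remark that agreement at any single element of $B\otimes B$ would suffice is also accurate.
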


\begin{proof}
The assertion follows from Lemma~\ref{lem:energy-uniqueness-general}.
\end{proof}

\vskip 2mm

\section{The structure of perfect crystals in classical affine types}
\label{sec:perfect-crystals-classical-affine}

In this section, we recall the coordinate realizations of the level-\(l\)
perfect crystals in \cite[Sections~6--8]{KMOTU98} and \cite{KMN2}. We write $(a)_+:=\max\{a,0\}$. The rank parameter \(n\) is assumed to be in the admissible range for the
corresponding affine type:
\begin{align*}
&A_{n}^{(1)}:\ n\geq 1,\quad
A_{2n}^{(2)}:\ n\geq 1,\quad
D_{n+1}^{(2)}:\ n\geq 2,\quad
C_n^{(1)}:\ n\geq 2,\\
&A_{2n-1}^{(2)}:\ n\geq 3,\quad
D_n^{(1)}:\ n\geq 4, \quad
B_n^{(1)}:\ n\geq 3.
\end{align*}

Whenever a formula for a Kashiwara operator gives a vector outside the
specified underlying set, the result is understood to be \(0\). The raising operators are determined by $\et{i}b=b'
\ \Longleftrightarrow\ \ft{i}b'=b$.
In the formulas below, only the displayed coordinates are changed; all
undisplayed coordinates remain unchanged.

\subsection{Perfect crystal of type $A_n^{(1)}$}
\label{subsec:perfect-crystal-cyclic-type}
 Let \(I=\Z/(n+1)\Z\), and let \(l\geq1\).
The level-\(l\) perfect crystal of type \(A_n^{(1)}\) is
\begin{equation}\label{eq:An-perfect-set}
B=\{(x_i)_{i\in I}\in\Z_{\geq0}^{I}\mid \sum_{i\in I}x_i=l\}.
\end{equation}
All indices are understood modulo \(n+1\). For \(b=(x_i)_{i\in I}\), the
Kashiwara operators are
\begin{equation}\label{eq:An-perfect-f}
	\ft{i}b
	=
	\begin{cases}
		(x'_j)_{j\in I}, & \text{if }x_i>0,\\[1mm]
		0, & \text{if }x_i=0,
	\end{cases}
\end{equation}
where $x'_i=x_i-1$, $x'_{i+1}=x_{i+1}+1$ and $x'_j=x_j\quad (j\neq i,i+1)$.

The functions \(\vphi_i\) and \(\veps_i\) are
\begin{equation}\label{eq:An-perfect-eps-phi}
	\vphi_i(b)=x_i,
	\qquad
	\veps_i(b)=x_{i+1}
	\qquad (i\in I).
\end{equation}
The weight is
\begin{equation}\label{eq:An-perfect-weight}
	\wt(b)
	=
	\sum_{i\in I}(x_i-x_{i+1})\Lambda_i.
\end{equation}

As a \(U_q(A_n)\)-crystal, $B$ is the one-row crystal
\(B(l\overline{\Lambda}_1)\). In the coordinate order
\((x_0,x_1,\dots,x_n)\), the coordinate \(x_i\) records the multiplicity of
the letter \(i\) for \(1\leq i\leq n\), while \(x_0\) records the multiplicity
of the letter \(n+1\).

\subsection{Perfect crystal of types $A_{2n}^{(2)}$, $D_{n+1}^{(2)}$, $C_n^{(1)}$, $A_{2n-1}^{(2)}$, $D_n^{(1)}$, $B_n^{(1)}$}
\label{subsec:perfect-crystal-barred-coordinate-types}

We now describe the remaining six families uniformly. Their elements are
written in barred coordinates as
\[
b=(x_1,\dots,x_n\mid \ol x_n,\dots,\ol x_1),
\]
or, for the two families $D_{n+1}^{(2)}$ and $B_n^{(1)}$ with an additional zero-coordinate, as
\[
b=(x_1,\dots,x_n\mid x_0\mid \ol x_n,\dots,\ol x_1),
\qquad x_0\in\{0,1\}.
\]
Set $S(b):=\sum_{i=1}^{n}(x_i+\ol x_i)$.

For brevity, let \((x_i\mid \ol x_i)\) denote
\((x_1,\dots,x_n\mid \ol x_n,\dots,\ol x_1)\), and
\((x_i\mid x_0\mid \ol x_i)\) denote
\((x_1,\dots,x_n\mid x_0\mid \ol x_n,\dots,\ol x_1)\).

\subsubsection*{Underlying sets}

The underlying sets are as follows.

\begin{enumerate}[label=\textup{(\roman*)},leftmargin=8mm]
	\item Type \(A_{2n}^{(2)}\):
	\begin{equation}\label{eq:A2n-ad-set}
		B
		=
		\left\{
		b=(x_i\mid \ol x_i)\in\Z_{\geq0}^{2n}
		\ \middle|\
		S(b)\leq l
		\right\}.
	\end{equation}
	
	\item Type \(D_{n+1}^{(2)}\):
	\begin{equation}\label{eq:D2n-perfect-set}
		B
		=
		\left\{
		b=(x_i\mid x_0\mid \ol x_i)
		\ \middle|\
		x_0\in\{0,1\},\
		x_i,\ol x_i\in\Z_{\geq0},\
		x_0+S(b)\leq l
		\right\}.
	\end{equation}
	
	\item Type \(C_n^{(1)}\):
	\begin{equation}\label{eq:Cn-perfect-set}
		B
		=
		\left\{
		b=(x_i\mid \ol x_i)\in\Z_{\geq0}^{2n}
		\ \middle|\
		S(b)\leq2l,\quad S(b)\in2\Z
		\right\}.
	\end{equation}
	
	\item Type \(A_{2n-1}^{(2)}\):
	\begin{equation}\label{eq:A2n-1-perfect-set}
		B
		=
		\left\{
		b=(x_i\mid \ol x_i)\in\Z_{\geq0}^{2n}
		\ \middle|\
		S(b)=l
		\right\}.
	\end{equation}
	
	\item Type \(D_n^{(1)}\):
	\begin{equation}\label{eq:Dn-perfect-set}
		B
		=
		\left\{
		b=(x_i\mid \ol x_i)\in\Z_{\geq0}^{2n}
		\ \middle|\
		S(b)=l,\quad x_n=0\text{ or }\ol x_n=0
		\right\}.
	\end{equation}
	
	\item Type \(B_n^{(1)}\):
	\begin{equation}\label{eq:Bn-perfect-set}
		B
		=
		\left\{
		b=(x_i\mid x_0\mid \ol x_i)
		\ \middle|\
		x_0\in\{0,1\},\
		x_i,\ol x_i\in\Z_{\geq0},\
		x_0+S(b)=l
		\right\}.
	\end{equation}
\end{enumerate}

\subsubsection*{Common internal nodes}
For the five types $A_{2n}^{(2)},\
D_{n+1}^{(2)},\
C_n^{(1)},\
A_{2n-1}^{(2)},\
B_n^{(1)},$
the internal nodes are \(1\leq i\leq n-1\). For type \(D_n^{(1)}\), the
internal nodes are \(1\leq i\leq n-2\). In all these cases one has
\begin{equation}\label{eq:common-internal-f}
	\ft{i}b
	=
	\begin{cases}
		(x_i-1,\ x_{i+1}+1), & \text{if }x_{i+1}\geq \ol x_{i+1},\\[1mm]
		(\ol x_{i+1}-1,\ \ol x_i+1), & \text{if }x_{i+1}<\ol x_{i+1}.
	\end{cases}
\end{equation}
\begin{equation}\label{eq:common-internal-eps-phi}
	\vphi_i(b)=x_i+(\ol x_{i+1}-x_{i+1})_+,
	\qquad
	\veps_i(b)=\ol x_i+(x_{i+1}-\ol x_{i+1})_+.
\end{equation}

\subsubsection*{Type-dependent boundary data}
The following formulas give exactly the data which are not covered by the
common internal formulas in \eqref{eq:common-internal-f}--\eqref{eq:common-internal-eps-phi}.

\vskip 2mm

\paragraph{Type \texorpdfstring{\(A_{2n}^{(2)}\)}{A2n twisted}.}
\begin{subequations}\label{eq:A2n-ad-f-eps-phi}
	\begin{align}
		\ft{0}b
		&=
		\begin{cases}
			(x_1+1), & \text{if }x_1\geq \ol x_1,\\
			(\ol x_1-1), & \text{if }x_1<\ol x_1,
		\end{cases}
		\label{eq:A2n-ad-f0}\\
		\ft{n}b
		&=
		(x_n-1,\ \ol x_n+1)
		\label{eq:A2n-ad-fn}\\
		\vphi_0(b)
		&=
		l-S(b)+2(\ol x_1-x_1)_+,
		\quad
		\veps_0(b)=l-S(b)+2(x_1-\ol x_1)_+
		\label{eq:A2n-ad-eps-phi-0}\\
		\vphi_n(b)
		&=
		x_n,
		\quad
		\veps_n(b)=\ol x_n.
		\label{eq:A2n-ad-eps-phi-n}
	\end{align}
\end{subequations}

\vskip 1mm

\paragraph{Type \texorpdfstring{\(D_{n+1}^{(2)}\)}{D n plus 1 twisted}.}
\begin{subequations}\label{eq:D2n-perfect-f-eps-phi}
	\begin{align}
		\ft{0}b
		&=
		\begin{cases}
			(x_1+1), & \text{if }x_1\geq \ol x_1,\\
			(\ol x_1-1), & \text{if }x_1<\ol x_1,
		\end{cases}
		\label{eq:D2n-perfect-f0}\\
		\ft{n}b
		&=
		\begin{cases}
			(x_n-1,\ x_0+1), & \text{if }x_0=0,\\
			(x_0-1,\ \ol x_n+1), & \text{if }x_0=1,
		\end{cases}
		\label{eq:D2n-perfect-fn}\\
		\vphi_0(b)
		&=
		l-x_0-S(b)+2(\ol x_1-x_1)_+,
		\quad
		\veps_0(b)=l-x_0-S(b)+2(x_1-\ol x_1)_+
		\label{eq:D2n-perfect-eps-phi-0}\\
		\vphi_n(b)
		&=
		2x_n+x_0,
		\quad
		\veps_n(b)=2\ol x_n+x_0.
		\label{eq:D2n-perfect-eps-phi-n}
	\end{align}
\end{subequations}

\vskip 1mm

\paragraph{Type \texorpdfstring{\(C_n^{(1)}\)}{C n affine}.}
\begin{subequations}\label{eq:Cn-perfect-f-eps-phi}
	\begin{align}
		\ft{0}b
		&=
		\begin{cases}
			(x_1+2), & \text{if }x_1\geq \ol x_1,\\
			(x_1+1,\ \ol x_1-1), & \text{if }x_1=\ol x_1-1,\\
			(\ol x_1-2), & \text{if }x_1\leq \ol x_1-2,
		\end{cases}
		\label{eq:Cn-perfect-f0}\\
		\ft{n}b
		&=
		(x_n-1,\ \ol x_n+1)
		\label{eq:Cn-perfect-fn}\\
		\vphi_0(b)
		&=
		l-\frac12S(b)+(\ol x_1-x_1)_+,
		\quad
		\veps_0(b)=l-\frac12S(b)+(x_1-\ol x_1)_+
		\label{eq:Cn-perfect-eps-phi-0}\\
		\vphi_n(b)
		&=
		x_n,
		\quad
		\veps_n(b)=\ol x_n.
		\label{eq:Cn-perfect-eps-phi-n}
	\end{align}
\end{subequations}

\vskip 1mm

\paragraph{Type \texorpdfstring{\(A_{2n-1}^{(2)}\)}{A odd twisted}.}
\begin{subequations}\label{eq:A2n-1-perfect-f-eps-phi}
	\begin{align}
		\ft{0}b
		&=
		\begin{cases}
			(x_2+1,\ \ol x_1-1), & \text{if }x_2\geq \ol x_2,\\
			(x_1+1,\ \ol x_2-1), & \text{if }x_2<\ol x_2,
		\end{cases}
		\label{eq:A2n-1-perfect-f0}\\
		\ft{n}b
		&=
		(x_n-1,\ \ol x_n+1)
		\label{eq:A2n-1-perfect-fn}\\
		\vphi_0(b)
		&=
		\ol x_1+(\ol x_2-x_2)_+,
		\quad
		\veps_0(b)=x_1+(x_2-\ol x_2)_+
		\label{eq:A2n-1-perfect-eps-phi-0}\\
		\vphi_n(b)
		&=
		x_n,
		\quad
		\veps_n(b)=\ol x_n.
		\label{eq:A2n-1-perfect-eps-phi-n}
	\end{align}
\end{subequations}

\vskip 1mm

\paragraph{Type \texorpdfstring{\(D_n^{(1)}\)}{D n affine}.}
\begin{subequations}\label{eq:Dn-perfect-f-eps-phi}
	\begin{align}
		\ft{0}b
		&=
		\begin{cases}
			(x_2+1,\ \ol x_1-1), & \text{if }x_2\geq \ol x_2,\\
			(x_1+1,\ \ol x_2-1), & \text{if }x_2<\ol x_2,
		\end{cases}
		\label{eq:Dn-perfect-f0}\\
		\ft{n-1}b
		&=
		\begin{cases}
			(x_{n-1}-1,\ x_n+1), & \text{if }\ol x_n=0,\\
			(\ol x_n-1,\ \ol x_{n-1}+1), & \text{if }x_n=0,\ \ol x_n\geq1,
		\end{cases}
		\label{eq:Dn-perfect-fn-1}\\
		\ft{n}b
		&=
		\begin{cases}
			(x_n-1,\ \ol x_{n-1}+1), & \text{if }x_n\geq1,\ \ol x_n=0,\\
			(x_{n-1}-1,\ \ol x_n+1), & \text{if }x_n=0,
		\end{cases}
		\label{eq:Dn-perfect-fn}\\
		\vphi_0(b)
		&=
		\ol x_1+(\ol x_2-x_2)_+,
		\quad
		\veps_0(b)=x_1+(x_2-\ol x_2)_+
		\label{eq:Dn-perfect-eps-phi-0}\\
		\vphi_{n-1}(b)
		&=
		x_{n-1}+\ol x_n,
		\quad
		\veps_{n-1}(b)=\ol x_{n-1}+x_n
		\label{eq:Dn-perfect-eps-phi-n-1}\\
		\vphi_n(b)
		&=
		x_{n-1}+x_n,
		\quad
		\veps_n(b)=\ol x_{n-1}+\ol x_n.
		\label{eq:Dn-perfect-eps-phi-n}
	\end{align}
\end{subequations}

\vskip 1mm

\paragraph{Type \texorpdfstring{\(B_n^{(1)}\)}{B n affine}.}
\begin{subequations}\label{eq:Bn-perfect-f-eps-phi}
	\begin{align}
		\ft{0}b
		&=
		\begin{cases}
			(x_2+1,\ \ol x_1-1), & \text{if }x_2\geq \ol x_2,\\
			(x_1+1,\ \ol x_2-1), & \text{if }x_2<\ol x_2,
		\end{cases}
		\label{eq:Bn-perfect-f0}\\
		\ft{n}b
		&=
		\begin{cases}
			(x_n-1,\ x_0+1), & \text{if }x_0=0,\\
			(x_0-1,\ \ol x_n+1), & \text{if }x_0=1,
		\end{cases}
		\label{eq:Bn-perfect-fn}\\
		\vphi_0(b)
		&=
		\ol x_1+(\ol x_2-x_2)_+,
		\quad
		\veps_0(b)=x_1+(x_2-\ol x_2)_+
		\label{eq:Bn-perfect-eps-phi-0}\\
		\vphi_n(b)
		&=
		2x_n+x_0,
		\quad
		\veps_n(b)=2\ol x_n+x_0.
		\label{eq:Bn-perfect-eps-phi-n}
	\end{align}
\end{subequations}

\subsubsection*{Weights}

For each barred-coordinate type, the weight is given uniformly by
\begin{equation}\label{eq:barred-perfect-weight}
	\wt(b)
	=
	\sum_{i=0}^{n}\bigl(\vphi_i(b)-\veps_i(b)\bigr)\Lambda_i.
\end{equation}

\begin{lemma}\label{lem:minimal-elements-all-types}		
Let \(B\) be a level-$l$ perfect crystal and let $\lambda=\sum_{i\in I}m_i\Lambda_i\in(P_{\mathrm{cl}}^+)_l$. Then the minimal
	elements \(b_\lambda, b^\lambda\in B\) are given as follows.
	
	\begin{enumerate}
		\item For type \(A_n^{(1)}\), let \(\sum_{i\in I}m_i=l\). Then $b_\lambda=(m_i)_{i\in I},\ b^\lambda=(m_{i-1})_{i\in I},$
		where all indices are understood modulo \(n+1\).
		Equivalently, in the coordinate order \((x_0,x_1,\dots,x_n)\),
		\[
		b_\lambda=(m_0,m_1,\dots,m_n),
		\qquad
		b^\lambda=(m_n,m_0,m_1,\dots,m_{n-1}).
		\]
		
		\vskip 1mm
		
		\item For type \(A_{2n}^{(2)}\), let $m_0+2\sum_{i=1}^n m_i=l$.
		Then $b_\lambda=b^\lambda
		=
		(m_1,\dots,m_n\mid m_n,\dots,m_1)$.
		
		\vskip 1mm
		
		\item For type \(D_{n+1}^{(2)}\), let $m_0+2\sum_{i=1}^{n-1}m_i+m_n=l$.
		Let \(r\in\{0,1\}\) be determined by $r\equiv m_n\pmod 2$,
		and set $m=(m_n-r)/2$.
		Then 
		\[
		b_\lambda=b^\lambda
		=
		(m_1,\dots,m_{n-1},m\mid r
		\mid
		m,m_{n-1},\dots,m_1).
	    \]
		
		\vskip 1mm
		
		\item For type \(C_n^{(1)}\), let $m_0+\sum_{i=1}^n m_i=l$.
		Then $b_\lambda=b^\lambda
		=
		(m_1,\dots,m_n\mid m_n,\dots,m_1)$.
	
		\vskip 1mm
		
		\item For type \(A_{2n-1}^{(2)}\), let $m_0+m_1+2\sum_{i=2}^n m_i=l$.
		Then
		\[
		b_\lambda
		=
		(m_1,m_2,\dots,m_n\mid m_n,\dots,m_2,m_0),\ b^\lambda
		=
		(m_0,m_2,\dots,m_n\mid m_n,\dots,m_2,m_1).
		\]

\vskip 1mm

		\item For type \(D_n^{(1)}\), let $m_0+m_1+2\sum_{i=2}^{n-2}m_i+m_{n-1}+m_n=l$.
		Set $a=\min(m_{n-1},m_n)$, $u=(m_n-m_{n-1})_+$ and $v=(m_{n-1}-m_n)_+$.
		Then
		\[
		b_\lambda
		=
		(m_1,m_2,\dots,m_{n-2},a,u
		\mid
		v,a,m_{n-2},\dots,m_2,m_0),
		\]
		\[
		b^\lambda
		=
		(m_0,m_2,\dots,m_{n-2},a,v
		\mid
		u,a,m_{n-2},\dots,m_2,m_1).
		\]
		
		\vskip 1mm
		
		\item For type \(B_n^{(1)}\), let $m_0+m_1+2\sum_{i=2}^{n-1}m_i+m_n=l$.
		Let \(r\in\{0,1\}\) be determined by $r\equiv m_n\pmod 2$,
		and set $m=(m_n-r)/2$.
		Then
		\[
		b_\lambda
		=
		(m_1,m_2,\dots,m_{n-1},m\mid r
		\mid
		m,m_{n-1},\dots,m_2,m_0),
		\]
		\[
		b^\lambda
		=
		(m_0,m_2,\dots,m_{n-1},m \mid r
		\mid
		m,m_{n-1},\dots,m_2,m_1).
		\]
	\end{enumerate}
\end{lemma}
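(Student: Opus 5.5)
The plan is a direct verification. By Definition~\ref{def:perfect-crystal}(5) the elements \(b_\lambda\) and \(b^\lambda\) are unique, so it suffices to show that each displayed vector lies in the underlying set \(B\) and satisfies \(\vphi_i(b_\lambda)=m_i\), respectively \(\veps_i(b^\lambda)=m_i\), for every \(i\in I\). Then \(\vphi(b_\lambda)=\lambda\) and \(\veps(b^\lambda)=\lambda\) by \eqref{eq:varepsilon-varphi}. No Kashiwara operators are needed; only the formulas for \(\vphi_i,\veps_i\) and the set descriptions enter.

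\textbf{Membership.} For each type I first check that the coordinate constraint defining \(B\) follows from the level condition on \(\lambda\).
\begin{itemize}
\item In type \(A_{2n}^{(2)}\), \(S=2\sum_{i\ge1} m_i=l-m_0\le l\).
\item In type \(C_n^{(1)}\), \(S=2\sum_{i\ge1} m_i\) is even and at most \(2l\).
\item In type \(D_{n+1}^{(2)}\), \(x_0+S=r+2\sum_{i<n}m_i+2m=l-m_0\le l\), using \(r+2m=m_n\).
\item In type \(A_{2n-1}^{(2)}\), \(S=m_0+m_1+2\sum_{i\ge2} m_i=l\).
\item In type \(D_n^{(1)}\), \(2a+u+v=m_{n-1}+m_n\) gives \(S=l\), and \(\min(u,v)=0\) gives the condition \(x_n=0\) or \(\ol x_n=0\).
\item In type \(B_n^{(1)}\), the computation is as in \(D_{n+1}^{(2)}\), but with equality.
\end{itemize}

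\textbf{Internal nodes.} For \(1\le i\le n-1\), or \(i\le n-2\) in type \(D_n^{(1)}\), the displayed vectors are symmetric in \(x_{i+1}\) and \(\ol x_{i+1}\) at all internal positions \(i+1\le n\) that matter. The only exceptions are the entries \(x_1,\ol x_1\) in types \(A_{2n-1}^{(2)},D_n^{(1)},B_n^{(1)}\), and the positions near the terminal node, handled below. So \((\ol x_{i+1}-x_{i+1})_+=0\) in \eqref{eq:common-internal-eps-phi}, which gives \(\vphi_i=x_i\) and \(\veps_i=\ol x_i\). These read off as \(m_i\) for \(b_\lambda\) and \(b^\lambda\) respectively. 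For \(i=1\) in the three types where \(x_1\ne\ol x_1\), this still works because only \(x_2,\ol x_2\) enter the positive part, and \(x_2=\ol x_2\). The one delicate internal case is \(i=n-2\) in type \(D_n^{(1)}\), where \(x_{n-1}=\ol x_{n-1}=a\); this is again symmetric.

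\textbf{Boundary nodes.} Here each type needs a short computation.
\begin{itemize}
\item Node \(0\) for \(A_{2n}^{(2)},D_{n+1}^{(2)},C_n^{(1)}\): since \(x_1=\ol x_1\), \(\vphi_0=l-x_0-S\) (or \(l-\tfrac12S\) in type \(C_n^{(1)}\)). This equals \(m_0\) by the level identity.
\item Node \(0\) for the other three types: \(\vphi_0(b_\lambda)=\ol x_1=m_0\) and \(\veps_0(b^\lambda)=x_1=m_0\).
\item Node \(n\): in types \(A_{2n}^{(2)},C_n^{(1)},A_{2n-1}^{(2)}\), \(\vphi_n=x_n=m_n\). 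In types \(D_{n+1}^{(2)},B_n^{(1)}\), \(\vphi_n=2m+r=m_n\), and symmetrically for \(\veps_n\).
\item Type \(D_n^{(1)}\), \(b_\lambda\): \(\vphi_{n-1}=a+v=m_{n-1}\) and \(\vphi_n=a+u=m_n\), using \(\min+(\,\cdot\,)_+=\max\)-type identities.
\item Type \(D_n^{(1)}\), \(b^\lambda\): the roles of \(u\) and \(v\) swap, giving \(\veps_{n-1}=a+v=m_{n-1}\) and \(\veps_n=a+u=m_n\).
\end{itemize}

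Type \(A_n^{(1)}\) is immediate from \eqref{eq:An-perfect-eps-phi}: \(\vphi_i(b_\lambda)=m_i\) and \(\veps_i(b^\lambda)=x_{i+1}=m_i\).

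The main obstacle is bookkeeping rather than anything conceptual. The most error-prone part is the \(D_n^{(1)}\) terminal pair together with the positions \(1\) and \(n\) in the \(b^\lambda\) vectors of types \(A_{2n-1}^{(2)}\), \(D_n^{(1)}\) and \(B_n^{(1)}\). There the entry \(m_0\) occupies the first coordinate and \(m_1\) the last, and I would check these entry by entry against \eqref{eq:Dn-perfect-eps-phi-n-1}--\eqref{eq:Dn-perfect-eps-phi-n}.
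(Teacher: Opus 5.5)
Your proof is correct in every case. I checked the points you flagged: the $D_n^{(1)}$ terminal identities $a+v=m_{n-1}$ and $a+u=m_n$ (with $u$ and $v$ swapped for $b^\lambda$), and the exchanged entries $x_1=m_0$, $\ol x_1=m_1$ in the $b^\lambda$ vectors. However, your route differs from the paper's.

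\emph{What the paper does.} It does not check the five types $A_{2n}^{(2)}$, $D_{n+1}^{(2)}$, $A_{2n-1}^{(2)}$, $D_n^{(1)}$, $B_n^{(1)}$; it cites \cite{KMN2} for them. For $A_n^{(1)}$ and $C_n^{(1)}$ it takes an arbitrary $b\in B$ and solves $\varphi(b)=\lambda$, $\varepsilon(b)=\lambda$ directly. In $A_n^{(1)}$ these are plain coordinate equations. In $C_n^{(1)}$ it first computes
\[
\langle c,\varphi(b)\rangle=\langle c,\varepsilon(b)\rangle=l+\tfrac12\sum_i|x_i-\ol x_i|,
\]
which forces $x_i=\ol x_i$, and then reads off $x_i=m_i$. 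This gives existence and uniqueness together, intrinsically, without using axiom (5) of Definition~\ref{def:perfect-crystal}.

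\emph{What your route buys, and what it costs.} You do a uniform plug-in check in all seven types and take uniqueness from perfectness. This makes the lemma self-contained exactly where the paper only cites. The cost is that uniqueness rests on the external theorem that these coordinate crystals are perfect. Since the lemma assumes $B$ is perfect, that is legitimate.

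Two small points to tidy:
\begin{enumerate}
\item In type $A_{2n}^{(2)}$ there is no $x_0$, so your node-$0$ formula should read $\varphi_0=l-S$.
\item In the types where $b_\lambda=b^\lambda$, you must verify both $\varphi(b)=\lambda$ and $\varepsilon(b)=\lambda$ for the same vector. Your write-up only spells out $\varphi_0$; say explicitly that $\varepsilon_0$ follows from the identical computation because $x_1=\ol x_1$.
\end{enumerate}
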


\begin{proof}
For the types $A_{2n-1}^{(2)},\ B_n^{(1)},\ D_n^{(1)},\
	D_{n+1}^{(2)},\ A_{2n}^{(2)}$,
	the minimal elements are proved in \cite[Sections~6.7--6.11]{KMN2}. It remains to verify the types
	\(A_n^{(1)}\) and \(C_n^{(1)}\).
	
	For type \(A_n^{(1)}\), let \(b=(x_i)_{i\in I}\in B\). By
	\eqref{eq:An-perfect-eps-phi}, we have $\varphi(b)=\sum_{i\in I}x_i\Lambda_i$ and $\varepsilon(b)=\sum_{i\in I}x_{i+1}\Lambda_i$.
Hence \(\varphi(b)=\lambda=\sum_{i\in I}m_i\Lambda_i\) if and only if
	\(x_i=m_i\) for all \(i\in I\), giving \(b_\lambda=(m_i)_{i\in I}\).
	Similarly, \(\varepsilon(b)=\lambda\) if and only if \(x_{i+1}=m_i\) for
	all \(i\in I\), equivalently \(x_i=m_{i-1}\) for all \(i\in I\), giving
	\(b^\lambda=(m_{i-1})_{i\in I}\). Both elements lie in \(B\) by
	\eqref{eq:An-perfect-set}, since \(\sum_{i\in I}m_i=l\), and uniqueness
	follows from the coordinate equations.
	
	For type \(C_n^{(1)}\), let
	\(b=(x_1,\dots,x_n\mid \ol x_n,\dots,\ol x_1)\in B\). By
	\eqref{eq:common-internal-eps-phi} and the type \(C_n^{(1)}\) boundary
	formulas in \eqref{eq:Cn-perfect-f-eps-phi}, together with
	\(S(b)=\sum_{i=1}^n(x_i+\ol x_i)\), one obtains
	\[
	\langle c,\varphi(b)\rangle
	=
	\langle c,\varepsilon(b)\rangle
	=
	l+\frac12\sum_{i=1}^n |x_i-\ol x_i|.
	\]
	Since \(\lambda\in(P_{\mathrm{cl}}^+)_l\), either
	\(\varphi(b)=\lambda\) or \(\varepsilon(b)=\lambda\) forces
	\(\sum_{i=1}^n |x_i-\ol x_i|=0\), hence \(x_i=\ol x_i\) for all \(i\).
	Thus \(b=(k_1,\dots,k_n\mid k_n,\dots,k_1)\). By \eqref{eq:varepsilon-varphi},  \eqref{eq:common-internal-eps-phi}, \eqref{eq:Cn-perfect-eps-phi-n}, we have
	\[
	\varphi(b)=\varepsilon(b)
	=
	\left(l-\sum_{i=1}^n k_i\right)\Lambda_0
	+
	\sum_{i=1}^n k_i\Lambda_i.
	\]
	Comparing with
	\(\lambda=m_0\Lambda_0+\sum_{i=1}^n m_i\Lambda_i\), where
	\(m_0+\sum_{i=1}^n m_i=l\), gives \(k_i=m_i\) for all \(1\le i\le n\).
	Therefore, we have $b_\lambda=b^\lambda=(m_1,\dots,m_n\mid m_n,\dots,m_1)$.
	This element belongs to \(B\) by \eqref{eq:Cn-perfect-set}, since
	\(S(b)=2\sum_{i=1}^n m_i=2(l-m_0)\le 2l\) and \(S(b)\in2\Z\).
	Uniqueness follows from the coordinate comparison.
\end{proof}

\vskip 2mm

\section{Closed formulas for the energy functions}
\label{sec:closed-form-energy-functions}

In this section, we give uniform closed formulas for the energy functions
on the level-\(l\) perfect crystals described in
Sections~\ref{subsec:perfect-crystal-cyclic-type}--\ref{subsec:perfect-crystal-barred-coordinate-types}.

\subsection{A unified max-linear formula for the energy functions}
Let \(X\) be one of the seven classical affine types, let \(B\) be the
corresponding level-\(l\) perfect crystal, and fix \(b_1,b_2\in B\).

If \(X=A_n^{(1)}\), write $b_1=(x_i)_{i\in I}$, $b_2=(y_i)_{i\in I}$, $I=\mathbb Z/(n+1)\mathbb Z$.
For the remaining six types, write $b_1=(x_1,\dots,x_n\mid\ol x_n,\dots,\ol x_1)$ and $b_2=(y_1,\dots,y_n\mid\ol y_n,\dots,\ol y_1)$,
with the additional \(0\)-coordinates \(x_0,y_0\) in types
\(D_{n+1}^{(2)}\) and \(B_n^{(1)}\).

Define the type-dependent index set
\begin{equation}\label{eq:uniform-index-set}
	J_X
	:=
	\begin{cases}
		\{0,1,\dots,n\},
		& X=A_n^{(1)},\\
		\{1,2,\dots,n\},
		& X=A_{2n}^{(2)},D_{n+1}^{(2)},C_n^{(1)},
		A_{2n-1}^{(2)},D_n^{(1)},B_n^{(1)}.
	\end{cases}
\end{equation}

Set \(A_0=B_0=0\), and for
\(1\leq r\leq n\), define
\begin{equation}\label{eq:uniform-AB}
	\begin{aligned}
	A_r
	&:=
	\sum_{j=1}^{r}(\ol x_j-\ol y_j)\ (\text{for the six barred-coordinate types}),\\
	B_r
	&:=
	\sum_{j=1}^{r}(y_j-x_j)\ (\text{for all seven types}).
	\end{aligned}
\end{equation}

For \(r\in J_X\), define \((\alpha_r^X,\beta_r^X)\) by
\begin{equation}\label{eq:uniform-alpha-beta}
	(\alpha_r^X,\beta_r^X)
	:=
	\begin{cases}
		\left(
		\displaystyle
		B_r+y_{r+1},\,
		\displaystyle
		B_r+y_{r+1}
		\right),\ 
		X=A_n^{(1)},0\leq r\leq n,
		\\[4mm]
		\bigl(
		A_{r-1}+(\ol x_r-x_r)_+,\,
		B_{r-1}+(y_r-\ol y_r)_+
		\bigr),\
		X=A_{2n}^{(2)},D_{n+1}^{(2)},C_n^{(1)},
		1\leq r\leq n,
		\\[4mm]
		(\ol x_1-x_1,\ y_1-\ol y_1),\
		X=A_{2n-1}^{(2)},D_n^{(1)},B_n^{(1)},
		r=1,
		\\[4mm]
		\bigl(
		A_{r-1}+(\ol x_r-x_r)_+,\,
		B_{r-1}+(y_r-\ol y_r)_+
		\bigr),\
		X=A_{2n-1}^{(2)},B_n^{(1)},
		2\leq r\leq n,
		\\[4mm]
		\bigl(
		A_{r-1}+(\ol x_r-x_r)_+,\,
		B_{r-1}+(y_r-\ol y_r)_+
		\bigr),\
		X=D_n^{(1)},
		2\leq r\leq n-1,
		\\[4mm]
		\bigl(
		A_{n-1}+x_n-\ol y_n,\,
		B_{n-1}+\ol y_n-x_n
		\bigr),\
		X=D_n^{(1)},
		r=n.
	\end{cases}
\end{equation}

In the \(A_n^{(1)}\) case, all the indices are understood modulo \(n+1\),
so that \(y_{n+1}=y_0\).

Now define
\begin{equation}\label{eq:uniform-UV-seven-types}
	\mathcal U_X
	:=
	\max_{r\in J_X}\alpha_r^X,
	\qquad
	\mathcal V_X
	:=
	\max_{r\in J_X}\beta_r^X.
\end{equation}

Define the type-dependent shift \(\Delta_X\) and coefficient
\(\Gamma_X\) by
\begin{equation}\label{eq:uniform-delta-c}
	(\Delta_X,\Gamma_X)
	:=
	\begin{cases}
		(0,1),
		& X=A_n^{(1)},A_{2n-1}^{(2)},D_n^{(1)},B_n^{(1)},\\
		(\Delta_A,2),
		& X=A_{2n}^{(2)},\\
		(\Delta_D,2),
		& X=D_{n+1}^{(2)},\\
		(\Delta_C,1),
		& X=C_n^{(1)}.
	\end{cases}
\end{equation}
Here
\begin{align*}
	\Delta_A
	&:=
	\sum_{i=1}^{n}(y_i+\ol y_i)
	-
	\sum_{i=1}^{n}(x_i+\ol x_i),
	\\
	\Delta_D
	&:=
	y_0+\sum_{i=1}^{n}(y_i+\ol y_i)
	-
	(
	x_0+\sum_{i=1}^{n}(x_i+\ol x_i)
	),
	\\
	\Delta_C
	&:=
	\frac12\sum_{i=1}^{n}(y_i+\ol y_i)
	-
	\frac12\sum_{i=1}^{n}(x_i+\ol x_i).
\end{align*}

We define the function $H_X:B\otimes B\longrightarrow \mathbb Z$ as follows.
\begin{equation}\label{eq:uniform-energy-seven-types}
	 H_X(b_1\otimes b_2)
	:=
	\max
	\left\{
	\Delta_X+\Gamma_X\mathcal U_X,\,
	-\Delta_X+\Gamma_X\mathcal V_X
	\right\}.
\end{equation}

The coordinate constraints imply that \(\Delta_X\in\mathbb Z\);
hence \(H_X\) is integer-valued.

\begin{lemma}[Internal-color invariance of $H_X$]
	\label{lem:common-local-internal-colors}
	Let \(X\) be one of the seven classical affine types, and let \(B\) be
	the corresponding level-\(l\) perfect crystal. Suppose that \(i\)
	satisfies
	\[
	\begin{cases}
		1\leq i\leq n,
		& X=A_n^{(1)},\\
		1\leq i\leq n-1,
		& X=A_{2n}^{(2)},D_{n+1}^{(2)},C_n^{(1)},
		A_{2n-1}^{(2)},B_n^{(1)},\\
		1\leq i\leq n-2,
		& X=D_n^{(1)}.
	\end{cases}
	\]
	If \(b_1,b_2\in B\) and
	\(\widetilde f_i(b_1\otimes b_2)\neq0\), then
	\[
	H_X\bigl(\widetilde f_i(b_1\otimes b_2)\bigr)
	=
	H_X(b_1\otimes b_2).
	\]
\end{lemma}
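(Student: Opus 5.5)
The plan is to show that, under an internal Kashiwara operator \(\widetilde f_i\), the shift \(\Delta_X\) is unchanged and each of \(\mathcal U_X\) and \(\mathcal V_X\) is unchanged separately. Then \(H_X\) is unchanged by \eqref{eq:uniform-energy-seven-types}.

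\(\Delta_X\) is the easy part. By \eqref{eq:common-internal-f} (and \eqref{eq:An-perfect-f} in type \(A_n^{(1)}\)), an internal \(\widetilde f_i\) moves one unit either from \(x_i\) to \(x_{i+1}\) or from \(\ol x_{i+1}\) to \(\ol x_i\). In both cases \(S(b)\) and \(x_0\) are preserved, hence \(\Delta_X\) is preserved.

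For \(\mathcal U_X,\mathcal V_X\) I use the tensor rule \eqref{eq:tensor-rule} and split into four cases. The operator acts on \(b_1\) (when \(\varphi_i(b_1)>\varepsilon_i(b_2)\)) or on \(b_2\) (when \(\varphi_i(b_1)\le\varepsilon_i(b_2)\)). In each case it moves either the unbarred pair (when \(x_{i+1}\ge\ol x_{i+1}\), resp. \(y_{i+1}\ge \ol y_{i+1}\)) or the barred pair. In each case only the indices \(r\in\{i,i+1\}\) are touched, so at most one or two candidates \(\alpha_r^X,\beta_r^X\) change, each by \(\pm1\). The mechanism is a dominance argument: every candidate that changes is bounded, both before and after the move, by an adjacent candidate that does not change, so the maximum is unaffected.

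Type \(A_n^{(1)}\) shows the mechanism. If \(\widetilde f_i\) acts on \(b_1\), only \(B_i\) changes, rising by \(1\). The new value \(\alpha_i=B_{i-1}+y_i-x_i+1+y_{i+1}\) is at most \(\alpha_{i-1}=B_{i-1}+y_i\) exactly because \(x_i>y_{i+1}\), which is the condition \(\varphi_i(b_1)>\varepsilon_i(b_2)\). If it acts on \(b_2\), then \(\alpha_i\) is unchanged while \(\alpha_{i-1}\) drops by \(1\). The old value \(\alpha_{i-1}\) is already at most \(\alpha_i\), since \(x_i\le y_{i+1}\).

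The six barred types follow the same pattern. Take \(\widetilde f_i\) acting on \(b_1\) with \(x_{i+1}\ge \ol x_{i+1}\). The inequality \(\varphi_i(b_1)>\varepsilon_i(b_2)\) then reads \(x_i\ge \ol y_i+(y_{i+1}-\ol y_{i+1})_++1\).
\begin{itemize}
\item In \(\mathcal V_X\), only \(\beta_{i+1}\) changes, rising by \(1\). The inequality gives \(\beta_{i+1}^{\mathrm{new}}\le B_{i-1}+y_i-\ol y_i\le\beta_i\), and \(\beta_i\) is unchanged.
\item In \(\mathcal U_X\), \((\ol x_{i+1}-x_{i+1})_+\) stays \(0\), and only \(\alpha_i\) can rise. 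Its new value \(A_{i-1}+(\ol x_i-x_i+1)_+\) is at most \(\alpha_{i+1}=A_{i-1}+\ol x_i-\ol y_i\), because \(x_i-1\ge\ol y_i\).
\end{itemize}
The barred move and the two \(b_2\)-cases are handled by the mirror-image estimates. There the roles of \((x,\ol y)\) and \((\ol x,y)\) are swapped, and the opposite inequality \(\varphi_i(b_1)\le\varepsilon_i(b_2)\) is used.

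The main obstacle is the bookkeeping at the edges of the internal range, where the comparison candidate has a special shape.
\begin{itemize}
\item For \(r=1\) in types \(A_{2n-1}^{(2)},D_n^{(1)},B_n^{(1)}\), the candidates \((\ol x_1-x_1,\ y_1-\ol y_1)\) carry no positive part. So the case \(i=1\) needs a direct check that the same inequalities still go through; the bounds derived above remain valid with \((\cdot)_+\) removed, because the key steps only used \(t\le t_+\).
\item For \(D_n^{(1)}\) at \(i=n-2\), the comparison may involve \(\alpha_n,\beta_n\), which carry \(x_n,\ol y_n\) rather than positive parts. Here I would use \(x_n\ol x_n=0\) to reduce \(x_{n-1}\)-dependence to the same inequality.
\end{itemize}
I expect these boundary sub-cases, rather than the generic dominance step, to need the most care.
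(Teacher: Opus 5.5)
Your proposal is essentially the paper's proof. It has the same invariance of $\Delta_X$ and the same two-case argument in type $A_n^{(1)}$ via $(B_{i-1}+y_i)-(B_i+y_{i+1})=x_i-y_{i+1}$. For the barred types it uses the same four cases (which factor $\widetilde f_i$ hits, unbarred versus barred move), with each changing candidate dominated before and after by an unchanged neighbour.

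The differences are cosmetic. The paper splits $\alpha_r^X=\max\{A_{r-1},A_{r-1}+\ol x_r-x_r\}$ into the linear branches $\eta,\xi,\zeta$ (and $\bar\eta,\bar\xi,\bar\zeta$). That avoids your small slip: the bound $A_{i-1}+(\ol x_i-x_i+1)_+\le A_i$ also needs $\ol x_i\ge x_i$, which holds exactly when $\alpha_i^X$ actually changes, and that is the only case you need. In $D_n^{(1)}$ at $i=n-2$ no appeal to $x_n\ol x_n=0$ is required, since $\alpha_n^X,\beta_n^X$ are simply fixed and never serve as comparison candidates.
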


\begin{proof}
	First suppose that \(X=A_n^{(1)}\). It follows from
	\eqref{eq:uniform-alpha-beta}--\eqref{eq:uniform-energy-seven-types} that $H_X(b_1\otimes b_2)
	=
	\max_{0\leq r\leq n}\{B_r+y_{r+1}\}$.
By \eqref{eq:uniform-AB}, we have
	\begin{equation}\label{eq:Biminus1}
		(B_{i-1}+y_i)-(B_i+y_{i+1})
		=
		x_i-y_{i+1}.
	\end{equation}
	Here and throughout the \(A_n^{(1)}\) case, the indices are understood
	modulo \(n+1\).

	Equations \eqref{eq:tensor-rule} and
	\eqref{eq:An-perfect-eps-phi} give
	\[
	\widetilde f_i(b_1\otimes b_2)
	=
	\begin{cases}
		\widetilde f_i b_1\otimes b_2,
		& x_i>y_{i+1},\\
		b_1\otimes\widetilde f_i b_2,
		& x_i\leq y_{i+1}.
	\end{cases}
	\]
	
	If \(x_i>y_{i+1}\), then
	\eqref{eq:An-perfect-f} and \eqref{eq:uniform-AB} show that only the
	candidate \(B_i+y_{i+1}\) changes, and it changes to
	\(B_i+y_{i+1}+1\). Since the coordinates are integral,
	\eqref{eq:Biminus1} gives
	\(B_i+y_{i+1}+1\leq B_{i-1}+y_i\). Hence the maximum is unchanged.
	
	If \(x_i\leq y_{i+1}\), then only the candidate
	\(B_{i-1}+y_i\) changes, and it changes to
	\(B_{i-1}+y_i-1\). Equation \eqref{eq:Biminus1} gives
	\(B_{i-1}+y_i\leq B_i+y_{i+1}\), so the maximum is again unchanged.
	
	Now suppose that \(X\) is one of the six barred-coordinate types.
	By \eqref{eq:tensor-rule}, \(\widetilde f_i\) acts on exactly one
	tensor factor. By \eqref{eq:common-internal-f}, it preserves the total
	coordinate sum of that factor and leaves its \(0\)-coordinate unchanged.
	Thus \(\Delta_A,\Delta_D,\Delta_C\) are invariant. For the remaining
	three barred-coordinate types, \(\Delta_X=0\) by
	\eqref{eq:uniform-delta-c}. Hence \(\Delta_X\) is invariant.
	
For
\[
\begin{cases}
	1\leq r\leq n,
	& X=A_{2n}^{(2)},D_{n+1}^{(2)},C_n^{(1)},\\
	2\leq r\leq n,
	& X=A_{2n-1}^{(2)},B_n^{(1)},\\
	2\leq r\leq n-1,
	& X=D_n^{(1)},
\end{cases}
\]
equation \eqref{eq:uniform-alpha-beta} gives
\[
\alpha_r^X
=
\max\{A_{r-1},A_{r-1}+\bar x_r-x_r\},
\qquad
\beta_r^X
=
\max\{B_{r-1},B_{r-1}+y_r-\bar y_r\}.
\]	

	Set
	\begin{equation}\label{eq:eta-xi-zeta}
		\begin{aligned}
			&\eta:=A_{i-1}+\bar x_i-x_i,
			\qquad
			\xi:=A_i,
			\qquad
			\zeta:=A_i+\bar x_{i+1}-x_{i+1},\\
			&\bar\eta:=B_{i-1}+y_i-\bar y_i,
			\qquad
			\bar\xi:=B_i,
			\qquad
			\bar\zeta:=B_i+y_{i+1}-\bar y_{i+1}.
		\end{aligned}
	\end{equation}
	
	Equations \eqref{eq:common-internal-f} and
	\eqref{eq:uniform-AB} show that all linear branches outside the
	indices \(r=i,i+1\) are fixed. Thus the only linear branches that may change are the six quantities
	in \eqref{eq:eta-xi-zeta}.
	
	For \(X=A_{2n-1}^{(2)},D_n^{(1)},B_n^{(1)}\), if \(i=1\), the
	exceptional \(r=1\) candidates in
	\eqref{eq:uniform-alpha-beta} are \(\eta\) and \(\bar\eta\);
	if \(i>1\), those candidates are fixed. In type \(D_n^{(1)}\), the
	terminal candidates
	\(A_{n-1}+x_n-\bar y_n\) and
	\(B_{n-1}+\bar y_n-x_n\) are fixed, since \(i\leq n-2\), the changes
	at \(i,i+1\) cancel in \(A_{n-1},B_{n-1}\), and
	\(x_n,\bar y_n\) do not change. By \eqref{eq:uniform-UV-seven-types}, it is therefore enough to
	compare the maxima of the varying branches in
	\eqref{eq:eta-xi-zeta}.
	
	By \eqref{eq:uniform-AB},
	\begin{equation}\label{eq:local-branch-differences}
		\begin{aligned}
			\xi-\eta
			&=x_i-\bar y_i,
			&
			\zeta-\xi
			&=\bar x_{i+1}-x_{i+1},\\
			\bar\eta-\bar\xi
			&=x_i-\bar y_i,
			&
			\bar\xi-\bar\zeta
			&=\bar y_{i+1}-y_{i+1},\\
			\zeta-\eta
			&=x_i-\bar y_i+\bar x_{i+1}-x_{i+1},
			&
			\bar\eta-\bar\zeta
			&=x_i-\bar y_i+\bar y_{i+1}-y_{i+1}.
		\end{aligned}
	\end{equation}
	In Cases~\textup{(i)}--\textup{(iv)}, the quantities in
	\eqref{eq:eta-xi-zeta} denote their values before applying
	\(\widetilde f_i\).
	
	\medskip
	\noindent
	\emph{(i) \(\widetilde f_i\) acts on \(b_1\) and
		\(x_{i+1}\geq\bar x_{i+1}\).}
	
	By \eqref{eq:common-internal-eps-phi} and
	\eqref{eq:tensor-rule},
	\(x_i>\bar y_i+(y_{i+1}-\bar y_{i+1})_+\).
	Equations \eqref{eq:common-internal-f} and
	\eqref{eq:uniform-AB} give
	\[
	\begin{aligned}
		(\eta,\xi,\zeta)
		\longmapsto(\eta+1,\xi,\zeta-1),\quad
		(\bar\eta,\bar\xi,\bar\zeta)
		\longmapsto(\bar\eta,\bar\xi+1,\bar\zeta+1).
	\end{aligned}
	\]
	The tensor inequality and integrality give
	\(x_i-\bar y_i\geq1\) and
	\(x_i-\bar y_i\geq y_{i+1}-\bar y_{i+1}+1\).
	Together with \(x_{i+1}\geq\bar x_{i+1}\) and
	\eqref{eq:local-branch-differences}, this yields
	\(\eta+1\leq\xi\), \(\zeta\leq\xi\),
	\(\bar\xi+1\leq\bar\eta\), and
	\(\bar\zeta+1\leq\bar\eta\).
	Hence
	\[
	\max\{\eta,\xi,\zeta\}
	=
	\max\{\eta+1,\xi,\zeta-1\}
	=
	\xi,\quad
	\max\{\bar\eta,\bar\xi,\bar\zeta\}
	=
	\max\{\bar\eta,\bar\xi+1,\bar\zeta+1\}
	=
	\bar\eta.
	\]
	Thus \(\mathcal U_X\) and \(\mathcal V_X\) are unchanged.
	
	\medskip
	\noindent
	\emph{(ii) \(\widetilde f_i\) acts on \(b_1\) and
		\(x_{i+1}<\bar x_{i+1}\).}
	
	By \eqref{eq:common-internal-eps-phi} and
	\eqref{eq:tensor-rule},
	\(x_i+\bar x_{i+1}-x_{i+1}>\bar y_i\). Moreover, $(\eta,\xi,\zeta)
	\longmapsto
	(\eta+1,\xi+1,\zeta)$,
	while all \(\mathcal V_X\)-branches are fixed.
	By \eqref{eq:local-branch-differences} and integrality,
	\(\eta+1\leq\zeta\) and \(\xi+1\leq\zeta\). Therefore
	\[
	\max\{\eta,\xi,\zeta\}
	=
	\max\{\eta+1,\xi+1,\zeta\}
	=
	\zeta.
	\]
	Hence \(\mathcal U_X\) and \(\mathcal V_X\) are unchanged.
	
	\medskip
	\noindent
	\emph{(iii) \(\widetilde f_i\) acts on \(b_2\) and
		\(y_{i+1}\geq\bar y_{i+1}\).}
	
	Equations \eqref{eq:common-internal-eps-phi} and
	\eqref{eq:tensor-rule} give
	\(x_i\leq\varphi_i(b_1)\leq
	\bar y_i+y_{i+1}-\bar y_{i+1}\). Moreover, $(\bar\eta,\bar\xi,\bar\zeta)
	\longmapsto
	(\bar\eta-1,\bar\xi-1,\bar\zeta)$,
	while all \(\mathcal U_X\)-branches are fixed.
	By \eqref{eq:local-branch-differences},
	\(\bar\eta\leq\bar\zeta\) and
	\(\bar\xi\leq\bar\zeta\). Therefore 
	$$
	\max\{\bar\eta,\bar\xi,\bar\zeta\}
	=
	\max\{\bar\eta-1,\bar\xi-1,\bar\zeta\}
	=
	\bar\zeta.
	$$
	Hence \(\mathcal U_X\) and \(\mathcal V_X\) are unchanged.
	
	\medskip
	\noindent
	\emph{(iv) \(\widetilde f_i\) acts on \(b_2\) and
		\(y_{i+1}<\bar y_{i+1}\).}

Equations \eqref{eq:common-internal-eps-phi} and
\eqref{eq:tensor-rule} give $x_i+(\bar x_{i+1}-x_{i+1})_+
=
\varphi_i(b_1)
\leq\bar y_i$.

Equations \eqref{eq:common-internal-f} and
\eqref{eq:uniform-AB} give
\[
\begin{aligned}
	(\eta,\xi,\zeta)
	\longmapsto(\eta,\xi-1,\zeta-1),\quad
	(\bar\eta,\bar\xi,\bar\zeta)
	\longmapsto(\bar\eta-1,\bar\xi,\bar\zeta+1).
\end{aligned}
\]

In particular, \(x_i\leq\bar y_i\). By
\eqref{eq:local-branch-differences},
\[
\xi-\eta=x_i-\bar y_i\leq0,
\qquad
\bar\eta-\bar\xi=x_i-\bar y_i\leq0,
\]
and
\[
\begin{aligned}
	\zeta-\eta
	=
	x_i-\bar y_i+\bar x_{i+1}-x_{i+1}
	\leq
	x_i-\bar y_i+(\bar x_{i+1}-x_{i+1})_+
	=
	\varphi_i(b_1)-\bar y_i
	\leq0.
\end{aligned}
\]
Hence
\(\xi\leq\eta\), \(\zeta\leq\eta\), and
\(\bar\eta\leq\bar\xi\).	
	
 Since
	\(y_{i+1}<\bar y_{i+1}\) and the coordinates are integral,
	\(\bar\zeta+1\leq\bar\xi\). Therefore
	\[
	\max\{\eta,\xi,\zeta\}
	=
	\max\{\eta,\xi-1,\zeta-1\}
	=
	\eta,\quad
	\max\{\bar\eta,\bar\xi,\bar\zeta\}
	=
	\max\{\bar\eta-1,\bar\xi,\bar\zeta+1\}
	=
	\bar\xi.
	\]
	Hence \(\mathcal U_X\) and \(\mathcal V_X\) are unchanged.
	
	Thus \(\Delta_X,\mathcal U_X,\mathcal V_X\) are invariant under
	\(\widetilde f_i\). Since \(\Gamma_X\) depends only on \(X\),
	\eqref{eq:uniform-energy-seven-types} gives $H_X\bigl(\widetilde f_i(b_1\otimes b_2)\bigr)
	=
	H_X(b_1\otimes b_2)$.
\end{proof}

\vskip 2mm

\begin{lemma}[Boundary-color recursion of \(H_X\)]
	\label{lem:boundary-color-recursion-all-types}
	Let \(X\) be one of the seven classical affine types, and let \(B\) be
	the corresponding level-\(l\) perfect crystal. Suppose that
	\[
	i\in
	\begin{cases}
		\{0\},
		& X=A_n^{(1)},\\
		\{0,n\},
		& X=A_{2n}^{(2)},D_{n+1}^{(2)},C_n^{(1)},
		A_{2n-1}^{(2)},B_n^{(1)},\\
		\{0,n-1,n\},
		& X=D_n^{(1)}.
	\end{cases}
	\]
	If \(b_1,b_2\in B\) and $\widetilde f_i(b_1\otimes b_2)\neq0$,
	then the function \(H_X\) defined in
	\eqref{eq:uniform-energy-seven-types} 
	satisfies \eqref{eq:energy} at color $i$.
\end{lemma}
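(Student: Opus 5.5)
The proof is a direct case check, organized like the proof of Lemma~\ref{lem:common-local-internal-colors}: for each type and each boundary color, determine which factor \(\widetilde f_i\) acts on, and then track how \(\Delta_X\) and each candidate \(\alpha^X_r,\beta^X_r\) change.

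\textbf{Terminal colors.} For \(i=n\) (and \(i=n-1\) in type \(D_n^{(1)}\)), the goal is invariance of \(H_X\), exactly as at internal nodes.

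\begin{itemize}
\item[(a)] The operator \(\widetilde f_n\) preserves \(x_0+S(b)\) in types \(D_{n+1}^{(2)},B_n^{(1)}\), and \(S(b)\) in the others. Hence \(\Delta_X\) is fixed.
\item[(b)] All \(A_r,B_r\) with \(r\le n-1\) are fixed. So only the last candidates \(\alpha^X_n,\beta^X_n\) can move, together with \(\alpha^X_{n-1},\beta^X_{n-1}\) in type \(D_n^{(1)}\).
\item[(c)] Write \(\alpha_n=\max\{A_{n-1},A_{n-1}+\ol x_n-x_n\}\). When \(\widetilde f_n\) acts on \(b_1\), either \(x_n\mapsto x_n-1\) and \(\ol x_n\mapsto\ol x_n+1\), or the \(x_0\)-step occurs.
\item[(d)] The tensor condition \(\varphi_n(b_1)>\varepsilon_n(b_2)\) gives \(x_n>\ol y_n\) in types \(A_{2n}^{(2)},C_n^{(1)},A_{2n-1}^{(2)}\). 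In the \(\Gamma=2\) types it gives \(2x_n+x_0>2\ol y_n+y_0\). This should force the moving candidate to stay dominated by \(A_{n-1}\), or by a fixed candidate.
\item[(e)] The case where \(\widetilde f_n\) acts on \(b_2\) is symmetric, with \(\beta_n\).
\item[(f)] In type \(D_n^{(1)}\), the terminal candidates \(A_{n-1}+x_n-\ol y_n\) and \(B_{n-1}+\ol y_n-x_n\) mix both factors. I would handle them with the two branches of \eqref{eq:Dn-perfect-fn-1}--\eqref{eq:Dn-perfect-fn} and the constraint \(x_n\ol x_n=0\), comparing them with \(\alpha_{n-1},\beta_{n-1}\) using \(\varepsilon_{n-1},\varphi_{n-1}\) from \eqref{eq:Dn-perfect-eps-phi-n-1}.
\end{itemize}

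\textbf{Color \(0\).} Here \(H_X\) must change by \(-1\) when \(\varphi_0(b_1)>\varepsilon_0(b_2)\), so \(\widetilde f_0\) acts on \(b_1\). It must change by \(+1\) otherwise.

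\begin{itemize}
\item[(a)] \emph{Type \(A_n^{(1)}\).} Split off the candidate \(B_0+y_1=y_1\) and \(B_n+y_0\), where \(B_n=\sum_{j=1}^{n}(y_j-x_j)=x_0-y_0\) because both sums equal \(l\).
 \begin{itemize}
 \item If \(x_0>y_1\), \(\widetilde f_0\) raises \(x_1\), so every \(B_r\) with \(r\ge1\) drops by \(1\), while \(y_1\) is fixed. Since \(B_n+y_0=x_0>y_1\), the maximum is attained at some \(r\ge1\), and each such candidate drops by \(1\).
 \item If \(x_0\le y_1\), \(\widetilde f_0\) lowers \(y_0\) and raises \(y_1\). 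All \(B_r\) with \(r\ge1\) and the candidate \(y_1\) rise by \(1\), while \(B_n+y_0=x_0\) is fixed. Since \(x_0\le y_1<y_1+1\), the maximum rises by \(1\).
 \end{itemize}
\item[(b)] \emph{Barred types.} Two mechanisms appear.
 \begin{itemize}
 \item In types \(A_{2n}^{(2)},D_{n+1}^{(2)},C_n^{(1)}\), \(\widetilde f_0\) changes \(S(b)\) by \(\pm1\) (\(\pm2\) in \(C_n^{(1)}\)). So \(\Delta_X\) shifts, and with \(\Gamma_X\) this gives shifts \(\pm1\) in one branch and \(\mp1\) in the other, before the \(\mathcal U,\mathcal V\) changes.
 \item Acting on \(b_1\) (\(x_1\mapsto x_1+1\), or \(\ol x_1\mapsto \ol x_1-1\)), every \(A_r\) with \(r\ge1\) and \((\ol x_1-x_1)_+\) weakly decrease by \(1\). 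I will show that \(\mathcal U\) drops by exactly one, so the first branch drops by \(\Delta\)-shift plus \(\Gamma\). I will also show the second branch was not the maximum, or drops too. That second claim is where the inequality \(\varphi_0(b_1)>\varepsilon_0(b_2)\) is used.
 \end{itemize}
\end{itemize}

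The main obstacle is this comparison between the two branches at color \(0\), together with the finite boundary effects.
\begin{itemize}
\item[(1)] The level term \(l-S(b)\) in \(\varphi_0,\varepsilon_0\) must be matched against \(\Delta_X\).
\item[(2)] In \(C_n^{(1)}\), the middle case \(x_1=\ol x_1-1\) has no net change in \(S\).
\item[(3)] In \(A_{2n-1}^{(2)},D_n^{(1)},B_n^{(1)}\), the \(r=1\) candidates \(\ol x_1-x_1\) and \(y_1-\ol y_1\) carry no positive part, and \(\widetilde f_0\) moves indices \(1\) and \(2\).
\end{itemize}

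For (3), I would first try to establish the identity
\[
\mathcal U_X-\mathcal V_X=\tfrac{1}{\Gamma_X}\bigl(\varphi_0(b_1)-\varepsilon_0(b_2)\bigr)-\tfrac{2}{\Gamma_X}\Delta_X+(\text{terms invariant under }\widetilde f_0).
\]
If that fails, I would establish the weaker statement that the sign of \(\varphi_0(b_1)-\varepsilon_0(b_2)\) determines which branch attains \(H_X\). Either fact lets one read off the \(\mp1\) change directly in each sub-case.
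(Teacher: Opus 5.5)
Your overall framework (per-color case analysis, invariance of $\Delta_X$ at the terminal nodes, tracking which candidates move) is the paper's. Your color-$0$ argument in type $A_n^{(1)}$ is also essentially the paper's; note that $y_1\le x_0-1$ by integrality, so the fixed candidate cannot overtake after the drop. There is a genuine gap, however, in the barred-type color-$0$ step, which you rightly call the crux: it rests on claims that are false.

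\emph{First, the bookkeeping is wrong.} $\mathcal U_X$ does not always drop by exactly one when $\widetilde f_0$ acts on $b_1$.
In $A_{2n}^{(2)}$, $D_{n+1}^{(2)}$ and $C_n^{(1)}$ with $x_1\ge\ol x_1$, the operator $\widetilde f_0$ only raises $x_1$. Every $A_r$ and $\alpha_1^X=0$ stay fixed, so $\mathcal U_X$ does not move; the $-1$ comes entirely from $\Delta_X$. Meanwhile $Q_1^X=-\Delta_X+\Gamma_X\beta_1^X$ \emph{increases} by $1$.
In $C_n^{(1)}$ with $x_1\le\ol x_1-2$, $\mathcal U_X$ drops by $2$ and $\Delta_X$ rises by $1$.
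So the second branch can go up, and ``not the maximum, or drops too'' is not the right dichotomy.

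\emph{Second, your key step is false.} Neither the proposed identity for $\mathcal U_X-\mathcal V_X$ nor its fallback holds. In type $A_5^{(2)}$ at level $2$, take $b_1=(0,0,1\mid0,0,1)$ and $b_2=(0,0,2\mid0,0,0)$.
\begin{itemize}
\item Before the step, $\varphi_0(b_1)=1>0=\varepsilon_0(b_2)$, yet $\mathcal U_X=1<2=\beta_3^X=\mathcal V_X$. So the sign does not select the attaining branch.
\item After $\widetilde f_0$, $b_1\mapsto(0,1,1\mid0,0,0)$, with $\mathcal U_X=0$ and $\mathcal V_X=1$. Thus $\mathcal U_X-\mathcal V_X$ is unchanged while $\varphi_0(b_1)-\varepsilon_0(b_2)$ drops by $1$, with $\Delta_X=0$.
\end{itemize}
A difference of two maxima admits no such affine identity. $H_X$ still drops by $1$ here only because the maximizer $\beta_3^X$ shifts uniformly.

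\emph{The missing idea} is to argue candidate by candidate, with $P_r^X=\Delta_X+\Gamma_X\alpha_r^X$ and $Q_r^X=-\Delta_X+\Gamma_X\beta_r^X$, across \emph{both} branches at once. In every sub-case, all of these shift by exactly $-1$ (action on $b_1$) or $+1$ (action on $b_2$), except at most two exceptional candidates. These sit at $r=1$, or at $r\in\{1,2\}$ in types $A_{2n-1}^{(2)},D_n^{(1)},B_n^{(1)}$. The tensor inequality is precisely what shows that each exceptional candidate is dominated, before and after the step, by a uniformly shifting candidate of the \emph{opposite} branch. Two instances:
\begin{itemize}
\item Acting on $b_1$ with $x_2\ge\ol x_2$ in the $\Delta_X=0$ types, one has $P_1^X-Q_2^X=\ol x_1-y_1-(y_2-\ol y_2)_+>0$ and $P_2^X-Q_1^X=\ol x_1-y_1>0$. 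Hence the fixed $Q_2^X,Q_1^X$ stay below $(P_1^X)',(P_2^X)'$ respectively.
\item In $A_{2n}^{(2)}$ with $x_1\ge\ol x_1$, the tensor condition becomes $\Delta_X\ge2\beta_1^X+1$, which gives $(Q_1^X)'\le0\le(P_1^X)'$.
\end{itemize}

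The same cross-branch mechanism is needed at the terminal nodes, where your item (d) is off. When $P_n^X$ changes it \emph{increases}, so it cannot stay below its own $A_{n-1}$-part. What holds instead is $(P_n^X)'\le Q_n^X$. In types $A_{2n}^{(2)},C_n^{(1)},A_{2n-1}^{(2)}$ this follows from $Q_n^X-(P_n^X)'=\Gamma_X\bigl(2(x_n-\ol y_n-1)+(\ol y_n-y_n)_+\bigr)$. That identity needs $\Delta_X$ together with the level constraints to cancel $B_{n-1}-A_{n-1}$, and your sketch does not supply it.
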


\begin{proof}
	If \(X=A_n^{(1)}\), write
	\(b_1=(x_i)_{i\in I}\) and \(b_2=(y_i)_{i\in I}\), where
	\(I=\mathbb Z/(n+1)\mathbb Z\). For each of the remaining six types,
	write
	\[
	b_1=(x_1,\ldots,x_n\mid\ol x_n,\ldots,\ol x_1),
	\qquad
	b_2=(y_1,\ldots,y_n\mid\ol y_n,\ldots,\ol y_1),
	\]
	including the additional \(0\)-coordinates \(x_0,y_0\) in types
	\(D_{n+1}^{(2)}\) and \(B_n^{(1)}\).
	
	For \(r\in J_X\), set
	\[
	P_r^X
	:=
	\Delta_X+\Gamma_X\alpha_r^X,
	\qquad
	Q_r^X
	:=
	-\Delta_X+\Gamma_X\beta_r^X.
	\]
	Primes denote the values obtained after applying the relevant Kashiwara
	operator. Thus
	\[
	(P_r^X)'
	=
	\Delta_X'+\Gamma_X(\alpha_r^X)',
	\qquad
	(Q_r^X)'
	=
	-\Delta_X'+\Gamma_X(\beta_r^X)'.
	\]
	By \eqref{eq:uniform-index-set},
	\eqref{eq:uniform-UV-seven-types}, and
	\eqref{eq:uniform-energy-seven-types}, we have $	H_X(b_1\otimes b_2)
	=
	\max_{r\in J_X}\{P_r^X,Q_r^X\}$.

\medskip
\noindent
\emph{The color \(0\) in type \(A_n^{(1)}\).}

Let \(X=A_n^{(1)}\). By \eqref{eq:uniform-alpha-beta} and
\eqref{eq:uniform-delta-c},
\(P_r^X=Q_r^X=B_r+y_{r+1}\) for \(0\leq r\leq n\).
By \eqref{eq:An-perfect-set} and \eqref{eq:uniform-AB},
\(B_n=x_0-y_0\); hence, since \(y_{n+1}=y_0\),
\(P_0^X=Q_0^X=y_1\) and \(P_n^X=Q_n^X=x_0\).

By \eqref{eq:An-perfect-eps-phi} and \eqref{eq:tensor-rule},
\[
\widetilde f_0(b_1\otimes b_2)
=
\begin{cases}
	\widetilde f_0b_1\otimes b_2,
	& x_0>y_1,\\
	b_1\otimes\widetilde f_0b_2,
	& x_0\leq y_1.
\end{cases}
\]

Suppose first that \(x_0>y_1\). By \eqref{eq:An-perfect-f} and
\eqref{eq:uniform-AB},
\[
(P_0^X)'=P_0^X,
\qquad
(P_r^X)'=P_r^X-1
\quad(1\leq r\leq n).
\]
The same relations hold for \(Q_r^X\). Since \(x_0,y_1\in\mathbb Z\),
\((P_0^X)'=y_1\leq x_0-1=(P_n^X)'\), while
\(P_0^X=y_1<P_n^X=x_0\). Therefore, by
\eqref{eq:uniform-energy-seven-types},
\[
\begin{aligned}
	H_X\bigl(\widetilde f_0(b_1\otimes b_2)\bigr)
	=
	\max_{1\leq r\leq n}\{P_r^X-1\}
	=
	\max_{1\leq r\leq n}\{P_r^X\}-1
	=
	H_X(b_1\otimes b_2)-1.
\end{aligned}
\]

Suppose next that \(x_0\leq y_1\). By
\eqref{eq:An-perfect-f} and \eqref{eq:uniform-AB},
\[
(P_r^X)'=P_r^X+1
\quad(0\leq r\leq n-1),
\qquad
(P_n^X)'=P_n^X.
\]
The same relations hold for \(Q_r^X\). Moreover,
\((P_n^X)'=x_0\leq y_1<P_0^X+1=(P_0^X)'\), while
\(P_n^X=x_0\leq P_0^X=y_1\). Hence, by
\eqref{eq:uniform-energy-seven-types},
\[
\begin{aligned}
	H_X\bigl(\widetilde f_0(b_1\otimes b_2)\bigr)
	=
	\max_{0\leq r\leq n-1}\{P_r^X+1\}
	=
	\max_{0\leq r\leq n-1}\{P_r^X\}+1
	=
	H_X(b_1\otimes b_2)+1.
\end{aligned}
\]
Thus \(H_X\) satisfies the color-\(0\) recursion
\eqref{eq:energy} in type \(A_n^{(1)}\).

\medskip
\noindent
\emph{The color \(0\) in types \(A_{2n}^{(2)}\) and
	\(D_{n+1}^{(2)}\).}

First let \(X=A_{2n}^{(2)}\). By
\eqref{eq:uniform-delta-c}, \(\Delta_X=\Delta_A\) and
\(\Gamma_X=2\). By \eqref{eq:A2n-ad-eps-phi-0},
\eqref{eq:tensor-rule}, and \eqref{eq:uniform-alpha-beta},
\(\widetilde f_0\) acts on the first tensor factor if and only if
\(\Delta_X+2\alpha_1^X>2\beta_1^X\), and on the second tensor factor
otherwise. By \eqref{eq:A2n-ad-f0},
\eqref{eq:uniform-alpha-beta}, and
\eqref{eq:uniform-delta-c}, the candidate changes are
\[
\resizebox{\textwidth}{!}{%
	\(
	\begin{array}{c|c|l|l}
		\hline
		\text{factor} & \text{condition} & \text{candidate changes}
		& \text{exceptional domination}
		\\ \hline
		1
		& x_1<\ol x_1
		& (P_r^X)'=P_r^X-1,\ (Q_r^X)'=Q_r^X-1\quad(1\leq r\leq n)
		& -
		\\ \hline
		1
		& x_1\geq\ol x_1
		& (P_r^X)'=P_r^X-1\ (1\leq r\leq n),\
		(Q_r^X)'=Q_r^X-1\ (2\leq r\leq n),\
		(Q_1^X)'=Q_1^X+1
		& Q_1^X\leq P_1^X,\quad (Q_1^X)'\leq(P_1^X)'
		\\ \hline
		2
		& y_1\geq\ol y_1
		& (P_r^X)'=P_r^X+1,\ (Q_r^X)'=Q_r^X+1\quad(1\leq r\leq n)
		& -
		\\ \hline
		2
		& y_1<\ol y_1
		& (P_r^X)'=P_r^X+1\ (2\leq r\leq n),\
		(Q_r^X)'=Q_r^X+1\ (1\leq r\leq n),\
		(P_1^X)'=P_1^X-1
		& P_1^X\leq Q_1^X,\quad (P_1^X)'<(Q_1^X)'
		\\ \hline
	\end{array}
	\)
}
\]

For the exceptional first-factor branch,
\(x_1\geq\ol x_1\) gives \(\alpha_1^X=0\), while the strict
tensor-factor condition and integrality give
\(\Delta_X\geq2\beta_1^X+1\). Hence
\(Q_1^X=-\Delta_X+2\beta_1^X\leq-1<1\leq P_1^X=\Delta_X\) and
\((Q_1^X)'=-\Delta_X+2\beta_1^X+1\leq0\leq
(P_1^X)'=\Delta_X-1\).

For the exceptional second-factor branch,
\(y_1<\ol y_1\) gives \(\beta_1^X=0\), while the weak
tensor-factor condition gives \(\Delta_X+2\alpha_1^X\leq0\).
Thus \(\Delta_X\leq0\),
\(P_1^X=\Delta_X+2\alpha_1^X\leq0\leq Q_1^X=-\Delta_X\), and
\((P_1^X)'=P_1^X-1<(Q_1^X)'=Q_1^X+1\). Therefore, \(H_X\) satisfies the color-\(0\) recursion.

Now let \(X=D_{n+1}^{(2)}\). By
\eqref{eq:uniform-delta-c}, \(\Delta_X=\Delta_D\) and
\(\Gamma_X=2\). By \eqref{eq:D2n-perfect-eps-phi-0},
\eqref{eq:tensor-rule}, \eqref{eq:D2n-perfect-f0}, and
\eqref{eq:uniform-alpha-beta}, the tensor-factor conditions and
coordinate branches are the same as for \(A_{2n}^{(2)}\).
Moreover, \(x_0,y_0\) are fixed by \(\widetilde f_0\), so the change
of \(\Delta_D\) equals the corresponding change of \(\Delta_A\).
Hence the same table and inequalities apply, and \(H_X\) satisfies the
color-\(0\) recursion in type \(D_{n+1}^{(2)}\).

\medskip
\noindent
\emph{The color \(0\) in type \(C_n^{(1)}\).}

Let \(X=C_n^{(1)}\). By \eqref{eq:uniform-delta-c},
\(\Delta_X=\Delta_C\) and \(\Gamma_X=1\). By
\eqref{eq:Cn-perfect-eps-phi-0}, \eqref{eq:tensor-rule}, and
\eqref{eq:uniform-alpha-beta}, \(\widetilde f_0\) acts on the first
tensor factor if and only if
\(\Delta_X+\alpha_1^X>\beta_1^X\), and on the second tensor factor
otherwise. By \eqref{eq:Cn-perfect-f0},
\eqref{eq:uniform-alpha-beta}, and
\eqref{eq:uniform-delta-c}, the candidate changes are
\[
\resizebox{\textwidth}{!}{%
	\(
	\begin{array}{c|c|l|l}
		\hline
		\text{factor} & \text{condition} & \text{candidate changes}
		& \text{exceptional domination}
		\\ \hline
		1
		& x_1\geq\ol x_1
		& (P_r^X)'=P_r^X-1\ (1\leq r\leq n),\
		(Q_r^X)'=Q_r^X-1\ (2\leq r\leq n),\
		(Q_1^X)'=Q_1^X+1
		& Q_1^X\leq P_1^X,\quad (Q_1^X)'\leq(P_1^X)'
		\\ \hline
		1
		& x_1=\ol x_1-1
		& (P_r^X)'=P_r^X-1\ (1\leq r\leq n),\
		(Q_r^X)'=Q_r^X-1\ (2\leq r\leq n),\
		(Q_1^X)'=Q_1^X
		& Q_1^X\leq P_1^X,\quad (Q_1^X)'\leq(P_1^X)'
		\\ \hline
		1
		& x_1\leq\ol x_1-2
		& (P_r^X)'=P_r^X-1,\ (Q_r^X)'=Q_r^X-1\quad(1\leq r\leq n)
		& -
		\\ \hline
		2
		& y_1\geq\ol y_1
		& (P_r^X)'=P_r^X+1,\ (Q_r^X)'=Q_r^X+1\quad(1\leq r\leq n)
		& -
		\\ \hline
		2
		& y_1=\ol y_1-1
		& (P_r^X)'=P_r^X+1\ (2\leq r\leq n),\
		(P_1^X)'=P_1^X,\
		(Q_r^X)'=Q_r^X+1\ (1\leq r\leq n)
		& P_1^X\leq Q_1^X,\quad (P_1^X)'<(Q_1^X)'
		\\ \hline
		2
		& y_1\leq\ol y_1-2
		& (P_r^X)'=P_r^X+1\ (2\leq r\leq n),\
		(P_1^X)'=P_1^X-1,\
		(Q_r^X)'=Q_r^X+1\ (1\leq r\leq n)
		& P_1^X\leq Q_1^X,\quad (P_1^X)'<(Q_1^X)'
		\\ \hline
	\end{array}
	\)
}
\]

By \eqref{eq:Cn-perfect-set},
\(S(b_1),S(b_2)\in2\mathbb Z\), and hence
\(\Delta_X=\frac12(S(b_2)-S(b_1))\in\mathbb Z\).
For \(x_1\geq\ol x_1\), one has \(\alpha_1^X=0\), and the strict
tensor-factor condition gives
\(\Delta_X\geq\beta_1^X+1\). Hence
\(Q_1^X=-\Delta_X+\beta_1^X\leq-1<1\leq
P_1^X=\Delta_X\) and
\((Q_1^X)'=-\Delta_X+\beta_1^X+1\leq0\leq
(P_1^X)'=\Delta_X-1\).

For \(x_1=\ol x_1-1\), one has \(\alpha_1^X=1\), and the strict
tensor-factor condition gives \(\Delta_X\geq\beta_1^X\). Thus
\(Q_1^X=-\Delta_X+\beta_1^X\leq0\leq
P_1^X=\Delta_X+1\) and
\((Q_1^X)'=Q_1^X\leq0\leq(P_1^X)'=\Delta_X\).

For \(y_1\leq\ol y_1-1\), one has \(\beta_1^X=0\), and the weak
tensor-factor condition gives
\(\Delta_X+\alpha_1^X\leq0\). Hence
\(\Delta_X\leq0\) and
\(P_1^X=\Delta_X+\alpha_1^X\leq0\leq
Q_1^X=-\Delta_X\). If \(y_1=\ol y_1-1\), then
\((P_1^X)'=P_1^X<(Q_1^X)'=Q_1^X+1\); if
\(y_1\leq\ol y_1-2\), then
\((P_1^X)'=P_1^X-1<(Q_1^X)'=Q_1^X+1\).

Therefore \(H_X\) satisfies the color-\(0\) recursion in type
\(C_n^{(1)}\).

\medskip
\noindent
\emph{The color \(0\) in types \(A_{2n-1}^{(2)}\),
	\(D_n^{(1)}\), and \(B_n^{(1)}\).}

Let \(X\in\{A_{2n-1}^{(2)},D_n^{(1)},B_n^{(1)}\}\). By
\eqref{eq:uniform-delta-c}, \(\Delta_X=0\) and \(\Gamma_X=1\), so
\(P_r^X=\alpha_r^X\) and \(Q_r^X=\beta_r^X\). By
\eqref{eq:A2n-1-perfect-eps-phi-0},
\eqref{eq:Dn-perfect-eps-phi-0},
\eqref{eq:Bn-perfect-eps-phi-0}, and
\eqref{eq:tensor-rule}, \(\widetilde f_0\) acts on the first tensor
factor if and only if
\[
\ol x_1+(\ol x_2-x_2)_+
>
y_1+(y_2-\ol y_2)_+,
\]
and on the second tensor factor otherwise. By
\eqref{eq:A2n-1-perfect-f0},
\eqref{eq:Dn-perfect-f0},
\eqref{eq:Bn-perfect-f0},
\eqref{eq:uniform-AB}, and
\eqref{eq:uniform-alpha-beta}, the candidate changes for
\(1\leq r\leq n-1\) are
\[
\resizebox{\textwidth}{!}{%
	\(
	\begin{array}{c|c|l|l}
		\hline
		\text{factor} & \text{condition} & \text{candidate changes}
		& \text{exceptional domination}
		\\ \hline
		1
		& x_2\geq\ol x_2
		& (P_r^X)'=P_r^X-1\ (1\leq r\leq n-1),\
		(Q_1^X)'=Q_1^X,\
		(Q_2^X)'=Q_2^X,\
		(Q_r^X)'=Q_r^X-1\ (3\leq r\leq n-1)
		& \begin{gathered}
			Q_2^X<P_1^X,\quad Q_1^X<P_2^X,\\
			(Q_2^X)'\leq(P_1^X)',\quad
			(Q_1^X)'\leq(P_2^X)'
		\end{gathered}
		\\ \hline
		1
		& x_2<\ol x_2
		& (P_r^X)'=P_r^X-1\ (1\leq r\leq n-1),\
		(Q_1^X)'=Q_1^X,\
		(Q_r^X)'=Q_r^X-1\ (2\leq r\leq n-1)
		& Q_1^X<P_2^X,\quad (Q_1^X)'\leq(P_2^X)'
		\\ \hline
		2
		& y_2\geq\ol y_2
		& (P_1^X)'=P_1^X,\
		(P_r^X)'=P_r^X+1\ (2\leq r\leq n-1),\
		(Q_r^X)'=Q_r^X+1\ (1\leq r\leq n-1)
		& P_1^X\leq Q_2^X,\quad (P_1^X)'<(Q_2^X)'
		\\ \hline
		2
		& y_2<\ol y_2
		& (P_1^X)'=P_1^X,\
		(P_2^X)'=P_2^X,\
		(P_r^X)'=P_r^X+1\ (3\leq r\leq n-1),\
		(Q_r^X)'=Q_r^X+1\ (1\leq r\leq n-1)
		& \begin{gathered}
			P_1^X\leq Q_2^X,\quad P_2^X\leq Q_1^X,\\
			(P_1^X)'<(Q_2^X)',\quad
			(P_2^X)'<(Q_1^X)'
		\end{gathered}
		\\ \hline
	\end{array}
	\)
}
\]

For \(X=A_{2n-1}^{(2)}\) and \(B_n^{(1)}\),
\eqref{eq:A2n-1-perfect-f0}, \eqref{eq:Bn-perfect-f0},
\eqref{eq:uniform-AB}, and \eqref{eq:uniform-alpha-beta}
show that the row-wise changes in the table also hold for \(r=n\). For \(X=D_n^{(1)}\),
\eqref{eq:Dn-perfect-f0}, \eqref{eq:uniform-AB}, and
\eqref{eq:uniform-alpha-beta} give
\[
\begin{aligned}
	(P_n^X)'&=P_n^X-1,
	&
	(Q_n^X)'&=Q_n^X-1
	&&\text{in the first-factor cases},\\
	(P_n^X)'&=P_n^X+1,
	&
	(Q_n^X)'&=Q_n^X+1
	&&\text{in the second-factor cases}.
\end{aligned}
\]

In the first-factor branch, if \(x_2\geq\ol x_2\), then the strict
tensor-factor condition gives
\(P_1^X-Q_2^X
=\ol x_1-y_1-(y_2-\ol y_2)_+>0\) and
\(P_2^X-Q_1^X=\ol x_1-y_1>0\).
If \(x_2<\ol x_2\), it gives
\(P_2^X-Q_1^X
=\ol x_1+\ol x_2-x_2-y_1>0\).
Since the candidates are integral, these inequalities give
\[
Q_2^X\leq P_1^X-1=(P_1^X)',
\qquad
Q_1^X\leq P_2^X-1=(P_2^X)'
\]
in the first row, and
\(Q_1^X\leq P_2^X-1=(P_2^X)'\) in the second row.

In the second-factor branch, if \(y_2\geq\ol y_2\), then the weak
tensor-factor condition gives \(P_1^X\leq Q_2^X\). If
\(y_2<\ol y_2\), it gives
\(P_1^X\leq Q_2^X\) and \(P_2^X\leq Q_1^X\).
The transformed inequalities in the last two rows follow because the
corresponding \(Q\)-candidates increase by \(1\), while the exceptional
\(P\)-candidates are unchanged.
Consequently, by \eqref{eq:uniform-energy-seven-types}, \(H_X\) satisfies the color-\(0\) recursion
\eqref{eq:energy} in types \(A_{2n-1}^{(2)}\),
\(D_n^{(1)}\), and \(B_n^{(1)}\).

\medskip
\noindent
\emph{The terminal color \(n\) in types \(A_{2n}^{(2)}\),
	\(C_n^{(1)}\), and \(A_{2n-1}^{(2)}\).}

Let \(X\in\{A_{2n}^{(2)},C_n^{(1)},A_{2n-1}^{(2)}\}\).
By \eqref{eq:A2n-ad-eps-phi-n},
\eqref{eq:Cn-perfect-eps-phi-n},
\eqref{eq:A2n-1-perfect-eps-phi-n}, and
\eqref{eq:tensor-rule}, \(\widetilde f_n\) acts on the first tensor
factor if and only if \(x_n>\ol y_n\), and on the second tensor factor
otherwise. By \eqref{eq:A2n-ad-fn},
\eqref{eq:Cn-perfect-fn},
\eqref{eq:A2n-1-perfect-fn},
\eqref{eq:uniform-AB},
\eqref{eq:uniform-alpha-beta}, and
\eqref{eq:uniform-delta-c}, all candidates with \(r<n\) are unchanged,
while the terminal candidates satisfy
\[
\resizebox{\textwidth}{!}{%
	\(
	\begin{array}{c|c|l|l}
		\hline
		\text{factor} & \text{condition} & \text{terminal candidate changes}
		& \text{exceptional domination}
		\\ \hline
		1
		& x_n>\ol y_n
		& \begin{gathered}
			(P_n^X)'=\Delta_X+\Gamma_X
			\bigl(A_{n-1}+(\ol x_n-x_n+2)_+\bigr),\\
			(Q_n^X)'=Q_n^X
		\end{gathered}
		& (P_n^X)'\neq P_n^X
		\ \Longrightarrow\
		P_n^X\leq(P_n^X)'\leq Q_n^X=(Q_n^X)'
		\\ \hline
		2
		& x_n\leq\ol y_n
		& \begin{gathered}
			(P_n^X)'=P_n^X,\\
			(Q_n^X)'=-\Delta_X+\Gamma_X
			\bigl(B_{n-1}+(y_n-\ol y_n-2)_+\bigr)
		\end{gathered}
		& (Q_n^X)'\neq Q_n^X
		\ \Longrightarrow\
		(Q_n^X)'\leq Q_n^X\leq P_n^X=(P_n^X)'
		\\ \hline
	\end{array}
	\)
}
\]

In the first-factor case, \((a+2)_+\geq a_+\) gives
\((P_n^X)'\geq P_n^X\). If \((P_n^X)'=P_n^X\), then all candidates are
unchanged. Otherwise, \(\ol x_n-x_n\geq-1\), and
\[
\begin{aligned}
	Q_n^X-(P_n^X)'
	&=
	\Bigl[-\Delta_X+\Gamma_X
	\bigl(B_{n-1}+(y_n-\ol y_n)_+\bigr)\Bigr]-
	\Bigl[\Delta_X+\Gamma_X
	\bigl(A_{n-1}+\ol x_n-x_n+2\bigr)\Bigr]\\
	&=
	\Gamma_X\Bigl(
	2(x_n-\ol y_n-1)+(\ol y_n-y_n)_+
	\Bigr)
	\geq0.
\end{aligned}
\]
The identity follows from
\eqref{eq:uniform-AB}, \eqref{eq:uniform-delta-c},
\eqref{eq:A2n-ad-set}, \eqref{eq:Cn-perfect-set}, and
\eqref{eq:A2n-1-perfect-set}; the inequality follows from
\(x_n>\ol y_n\) and integrality. Hence
\(P_n^X\leq(P_n^X)'\leq Q_n^X=(Q_n^X)'\).

In the second-factor case, \((a-2)_+\leq a_+\) gives
\((Q_n^X)'\leq Q_n^X\). If \((Q_n^X)'=Q_n^X\), then all candidates are
unchanged. Otherwise, \(y_n>\ol y_n\), and
\[
\begin{aligned}
	P_n^X-Q_n^X
	&=
	\Bigl[\Delta_X+\Gamma_X
	\bigl(A_{n-1}+(\ol x_n-x_n)_+\bigr)\Bigr]-
	\Bigl[-\Delta_X+\Gamma_X
	\bigl(B_{n-1}+y_n-\ol y_n\bigr)\Bigr]\\
	&=
	\Gamma_X\Bigl(
	2(\ol y_n-x_n)+(x_n-\ol x_n)_+
	\Bigr)
	\geq0.
\end{aligned}
\]
The identity follows from
\eqref{eq:uniform-AB}, \eqref{eq:uniform-delta-c},
\eqref{eq:A2n-ad-set}, \eqref{eq:Cn-perfect-set}, and
\eqref{eq:A2n-1-perfect-set}; the inequality follows from
\(x_n\leq\ol y_n\). Hence
\((Q_n^X)'\leq Q_n^X\leq P_n^X=(P_n^X)'\).

Since all candidates with \(r<n\) are unchanged,
\eqref{eq:uniform-energy-seven-types} gives $H_X\bigl(\widetilde f_n(b_1\otimes b_2)\bigr)
=
H_X(b_1\otimes b_2)$.

\medskip
\noindent
\emph{The terminal color \(n\) in types \(D_{n+1}^{(2)}\) and
	\(B_n^{(1)}\).}

Let \(X\in\{D_{n+1}^{(2)},B_n^{(1)}\}\). By
\eqref{eq:D2n-perfect-eps-phi-n},
\eqref{eq:Bn-perfect-eps-phi-n}, and
\eqref{eq:tensor-rule}, \(\widetilde f_n\) acts on the first tensor
factor if and only if
\(2x_n+x_0>2\ol y_n+y_0\), and on the second tensor factor otherwise.
By \eqref{eq:D2n-perfect-fn}, \eqref{eq:Bn-perfect-fn}, and
\eqref{eq:uniform-delta-c}, one has \(\Delta_X'=\Delta_X\): the quantity \(x_0+S(b_1)\), respectively \(y_0+S(b_2)\),
is preserved in
type \(D_{n+1}^{(2)}\), whereas \(\Delta_X=0\) in type
\(B_n^{(1)}\). Hence, by \eqref{eq:uniform-AB} and
\eqref{eq:uniform-alpha-beta}, all candidates with \(r<n\) are
unchanged.

Suppose first that \(2x_n+x_0>2\ol y_n+y_0\). Then
\[
(P_n^X)'
=
\Delta_X+\Gamma_X
\bigl(A_{n-1}+(\ol x_n-x_n+1)_+\bigr),
\qquad
(Q_n^X)'=Q_n^X,
\]
and \((P_n^X)'\geq P_n^X\). If \((P_n^X)'=P_n^X\), then all candidates
are unchanged. Otherwise, \(\ol x_n-x_n\geq0\), and
\[
\begin{aligned}
	Q_n^X-(P_n^X)'
	&=
	\Gamma_X\Bigl(
	2(x_n-\ol y_n)+x_0-y_0-1
	+(\ol y_n-y_n)_+
	\Bigr)
	\geq0.
\end{aligned}
\]
The identity follows from
\eqref{eq:uniform-AB}, \eqref{eq:uniform-delta-c},
\eqref{eq:D2n-perfect-set}, and \eqref{eq:Bn-perfect-set}.
The inequality follows from
\(2x_n+x_0>2\ol y_n+y_0\) and integrality. Thus
\(P_n^X\leq(P_n^X)'\leq Q_n^X=(Q_n^X)'\).

Suppose next that \(2x_n+x_0\leq2\ol y_n+y_0\). Then
\[
(P_n^X)'=P_n^X,
\qquad
(Q_n^X)'
=
-\Delta_X+\Gamma_X
\bigl(B_{n-1}+(y_n-\ol y_n-1)_+\bigr),
\]
and \((Q_n^X)'\leq Q_n^X\). If \((Q_n^X)'=Q_n^X\), then all candidates
are unchanged. Otherwise, \(y_n>\ol y_n\), and
\[
\begin{aligned}
	P_n^X-Q_n^X
	&=
	\Gamma_X\Bigl(
	2(\ol y_n-x_n)+y_0-x_0
	+(x_n-\ol x_n)_+
	\Bigr)
	\geq0.
\end{aligned}
\]
The identity follows from
\eqref{eq:uniform-AB}, \eqref{eq:uniform-delta-c},
\eqref{eq:D2n-perfect-set}, and \eqref{eq:Bn-perfect-set}.
The inequality follows from
\(2x_n+x_0\leq2\ol y_n+y_0\). Thus
\((Q_n^X)'\leq Q_n^X\leq P_n^X=(P_n^X)'\).

Since all candidates with \(r<n\) are unchanged,
\eqref{eq:uniform-energy-seven-types} gives $H_X\bigl(\widetilde f_n(b_1\otimes b_2)\bigr)
=
H_X(b_1\otimes b_2)$.

\medskip
\noindent
\emph{The terminal colors \(n-1\) and \(n\) in type \(D_n^{(1)}\).}

Let \(X=D_n^{(1)}\). By \eqref{eq:uniform-delta-c},
\(\Delta_X=0\) and \(\Gamma_X=1\), so
\(P_r^X=\alpha_r^X\) and \(Q_r^X=\beta_r^X\). By
\eqref{eq:Dn-perfect-set} and \eqref{eq:uniform-AB},
\[
B_{n-2}-A_{n-2}
=
x_{n-1}+\ol x_{n-1}+x_n+\ol x_n
-y_{n-1}-\ol y_{n-1}-y_n-\ol y_n.
\]
By \eqref{eq:Dn-perfect-eps-phi-n-1},
\eqref{eq:Dn-perfect-eps-phi-n}, and \eqref{eq:tensor-rule},
\(\widetilde f_{n-1}\) acts on the first tensor factor if and only if
\(x_{n-1}+\ol x_n>\ol y_{n-1}+y_n\), whereas
\(\widetilde f_n\) acts on the first tensor factor if and only if
\(x_{n-1}+x_n>\ol y_{n-1}+\ol y_n\); otherwise the corresponding
operator acts on the second tensor factor.

By \eqref{eq:Dn-perfect-fn-1}, \eqref{eq:Dn-perfect-fn},
\eqref{eq:uniform-AB}, and \eqref{eq:uniform-alpha-beta}, all
candidates with \(r\leq n-2\) are unchanged, while the terminal
candidates satisfy
\[
\resizebox{\textwidth}{!}{%
	\(
	\begin{array}{c|c|l|l}
		\hline
		\text{color and factor} & \text{condition}
		& \text{terminal candidate changes}
		& \text{exceptional domination}
		\\ \hline
		i=n-1,\ 1
		& x_{n-1}+\ol x_n>\ol y_{n-1}+y_n
		& \begin{gathered}
			(P_{n-1}^X)'=A_{n-2}
			+(\ol x_{n-1}-x_{n-1}+1)_+,\\
			(P_n^X)'=P_n^X+1,\quad
			(Q_{n-1}^X)'=Q_{n-1}^X,\quad
			(Q_n^X)'=Q_n^X
		\end{gathered}
		& \begin{gathered}
			(P_{n-1}^X)'\neq P_{n-1}^X
			\Longrightarrow
			P_{n-1}^X\leq(P_{n-1}^X)'\leq Q_n^X=(Q_n^X)',\\
			P_n^X\leq(P_n^X)'\leq Q_{n-1}^X=(Q_{n-1}^X)'
		\end{gathered}
		\\ \hline
		i=n-1,\ 2
		& x_{n-1}+\ol x_n\leq\ol y_{n-1}+y_n
		& \begin{gathered}
			(P_{n-1}^X)'=P_{n-1}^X,\quad
			(P_n^X)'=P_n^X,\\
			(Q_{n-1}^X)'=B_{n-2}
			+(y_{n-1}-\ol y_{n-1}-1)_+,\\
			(Q_n^X)'=Q_n^X-1
		\end{gathered}
		& \begin{gathered}
			(Q_{n-1}^X)'\neq Q_{n-1}^X
			\Longrightarrow
			(Q_{n-1}^X)'\leq Q_{n-1}^X\leq P_n^X=(P_n^X)',\\
			(Q_n^X)'\leq Q_n^X\leq P_{n-1}^X=(P_{n-1}^X)'
		\end{gathered}
		\\ \hline
		i=n,\ 1
		& x_{n-1}+x_n>\ol y_{n-1}+\ol y_n
		& \begin{gathered}
			(P_{n-1}^X)'=A_{n-2}
			+(\ol x_{n-1}-x_{n-1}+1)_+,\\
			(P_n^X)'=P_n^X,\quad
			(Q_{n-1}^X)'=Q_{n-1}^X,\quad
			(Q_n^X)'=Q_n^X+1
		\end{gathered}
		& \begin{gathered}
			(P_{n-1}^X)'\neq P_{n-1}^X
			\Longrightarrow
			P_{n-1}^X\leq(P_{n-1}^X)'\leq P_n^X=(P_n^X)',\\
			Q_n^X\leq(Q_n^X)'\leq Q_{n-1}^X=(Q_{n-1}^X)'
		\end{gathered}
		\\ \hline
		i=n,\ 2
		& x_{n-1}+x_n\leq\ol y_{n-1}+\ol y_n
		& \begin{gathered}
			(P_{n-1}^X)'=P_{n-1}^X,\quad
			(P_n^X)'=P_n^X-1,\\
			(Q_{n-1}^X)'=B_{n-2}
			+(y_{n-1}-\ol y_{n-1}-1)_+,\\
			(Q_n^X)'=Q_n^X
		\end{gathered}
		& \begin{gathered}
			(P_n^X)'\leq P_n^X\leq P_{n-1}^X=(P_{n-1}^X)',\\
			(Q_{n-1}^X)'\neq Q_{n-1}^X
			\Longrightarrow
			(Q_{n-1}^X)'\leq Q_{n-1}^X\leq Q_n^X=(Q_n^X)'
		\end{gathered}
		\\ \hline
	\end{array}
	\)
}
\]

For \(i=n-1\) in the first-factor case, if
\((P_{n-1}^X)'\neq P_{n-1}^X\), then
\(\ol x_{n-1}-x_{n-1}\geq0\), and
\[
\begin{aligned}
	Q_n^X-(P_{n-1}^X)'
	&=
	x_{n-1}+\ol x_n-\ol y_{n-1}-y_n-1
	\geq0,\\
	Q_{n-1}^X-(P_n^X)'
	&\geq
	x_{n-1}+\ol x_n-\ol y_{n-1}-y_n-1
	\geq0.
\end{aligned}
\]
In the second-factor case, if
\((Q_{n-1}^X)'\neq Q_{n-1}^X\), then
\(y_{n-1}>\ol y_{n-1}\), and
\[
\begin{aligned}
	P_n^X-Q_{n-1}^X
	&=
	\ol y_{n-1}+y_n-x_{n-1}-\ol x_n
	\geq0,\\
	P_{n-1}^X-Q_n^X
	&\geq
	\ol y_{n-1}+y_n-x_{n-1}-\ol x_n
	\geq0.
\end{aligned}
\]

For \(i=n\) in the first-factor case, if
\((P_{n-1}^X)'\neq P_{n-1}^X\), then
\(\ol x_{n-1}-x_{n-1}\geq0\), and
\[
\begin{aligned}
	P_n^X-(P_{n-1}^X)'
	&=
	x_{n-1}+x_n-\ol y_{n-1}-\ol y_n-1
	\geq0,\\
	Q_{n-1}^X-(Q_n^X)'
	&\geq
	x_{n-1}+x_n-\ol y_{n-1}-\ol y_n-1
	\geq0.
\end{aligned}
\]
In the second-factor case, if
\((Q_{n-1}^X)'\neq Q_{n-1}^X\), then
\(y_{n-1}>\ol y_{n-1}\), and
\[
\begin{aligned}
	P_{n-1}^X-P_n^X
	&\geq
	\ol y_{n-1}+\ol y_n-x_{n-1}-x_n
	\geq0,\\
	Q_n^X-Q_{n-1}^X
	&=
	\ol y_{n-1}+\ol y_n-x_{n-1}-x_n
	\geq0.
\end{aligned}
\]

Thus every possible new terminal candidate is dominated in the
first-factor cases, and every possibly lost terminal candidate is
already dominated in the second-factor cases. Since all candidates
with \(r\leq n-2\) are unchanged,
\eqref{eq:uniform-energy-seven-types} gives $H_X\bigl(\widetilde f_i(b_1\otimes b_2)\bigr)
=
H_X(b_1\otimes b_2)$ for $i=n-1,n$.

The boundary-color recursion has therefore been verified for all seven classical affine types. 
\end{proof}

\vskip 2mm

\begin{theorem}\label{thm:closed-form-energy-functions}
	Let \(X\) be one of the seven classical affine types, and let \(B\)
	be the corresponding level-\(l\) perfect crystal. Then the function
	\(H_X:B\otimes B\to\mathbb Z\) defined in
	\eqref{eq:uniform-energy-seven-types} is an energy function on
	\(B\otimes B\).
\end{theorem}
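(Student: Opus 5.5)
The theorem follows by assembling the two preceding lemmas, since Definition~\ref{def:energy} is a color-by-color condition. I would argue as follows. First note that \(H_X\) is well defined and integer-valued. Each candidate \(\alpha_r^X,\beta_r^X\) is an integer combination of coordinates. \(\Gamma_X\in\{1,2\}\). The shift \(\Delta_X\) is integral: this is obvious in types \(A_{2n}^{(2)}\) and \(D_{n+1}^{(2)}\); in type \(C_n^{(1)}\) it holds because \(S(b_1),S(b_2)\in2\Z\) by \eqref{eq:Cn-perfect-set}; and \(\Delta_X=0\) otherwise. So \(H_X\colon B\otimes B\to\Z\).

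Next, fix \(i\in I\) and \(b_1\otimes b_2\) with \(\widetilde f_i(b_1\otimes b_2)\neq0\). I would partition \(I\) by type into internal and boundary colors:
\begin{itemize}
\item internal colors are \(1\le i\le n\) for \(A_n^{(1)}\), \(1\le i\le n-2\) for \(D_n^{(1)}\), and \(1\le i\le n-1\) otherwise;
\item boundary colors are \(\{0\}\), \(\{0,n-1,n\}\), \(\{0,n\}\) respectively.
\end{itemize}
These two sets are disjoint and exhaust \(I\) in every type. For an internal color, Lemma~\ref{lem:common-local-internal-colors} gives \(H_X(\widetilde f_i(b_1\otimes b_2))=H_X(b_1\otimes b_2)\), which is the \(i\neq0\) case of \eqref{eq:energy}. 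For a boundary color, Lemma~\ref{lem:boundary-color-recursion-all-types} asserts directly that \eqref{eq:energy} holds at color \(i\). That lemma covers both cases: the colors \(n-1,n\) give invariance, and the color \(0\) gives \(\mp1\) according to whether \(\varphi_0(b_1)>\varepsilon_0(b_2)\).

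Hence \eqref{eq:energy} holds for every \(i\in I\), and \(H_X\) is an energy function. The only point needing care is bookkeeping: the color ranges in the two lemmas must match the index set \(I\) for each type with no gaps. In particular, for \(A_n^{(1)}\) we have \(I=\Z/(n+1)\Z\), the internal range \(1\le i\le n\) together with \(0\) covers everything, and the branch conditions \(x_0>y_1\) in the boundary lemma are exactly \(\varphi_0(b_1)>\varepsilon_0(b_2)\). The genuine work lies in the two lemmas, which are already established.

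As an optional remark, one could add a normalization. By Corollary~\ref{cor:energy-equality}, \(H_X\) is determined by its value at any \(b^\lambda\otimes b_\lambda\) from Lemma~\ref{lem:minimal-elements-all-types}. This value is not needed for the theorem itself.
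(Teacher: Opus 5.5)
Your proposal is correct and matches the paper's proof: the theorem follows by applying Lemma~\ref{lem:common-local-internal-colors} at the internal colors and Lemma~\ref{lem:boundary-color-recursion-all-types} at the boundary colors, and noting that these color ranges together exhaust \(I\) in each of the seven types. Your integrality check of \(\Delta_X\), which the paper records right after \eqref{eq:uniform-energy-seven-types}, and your normalization remark are harmless additions that the argument does not require.
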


\begin{proof}
	By Lemma~\ref{lem:common-local-internal-colors},
	\(H_X\) satisfies \eqref{eq:energy} at every internal color, and by
	Lemma~\ref{lem:boundary-color-recursion-all-types}, it satisfies
	\eqref{eq:energy} at every boundary color. These colors exhaust the
	index set \(I\) for each of the seven types. Hence \(H_X\) is an
	energy function by Definition~\ref{def:energy}.
\end{proof}

\subsection{Recursive formulas for the energy functions}

We keep the rank assumptions and the coordinate conventions of
Section~\ref{sec:perfect-crystals-classical-affine} and
Section~\ref{sec:closed-form-energy-functions}. Thus
\(A_r,B_r\), \(\alpha_r^X,\beta_r^X\), \(\Delta_X,\Gamma_X\),
\(\mathcal U_X,\mathcal V_X\), and \(H_X\) are as defined in
\eqref{eq:uniform-AB}--\eqref{eq:uniform-energy-seven-types}.

By Theorem~\ref{thm:closed-form-energy-functions}, \(H_X\) is the
corresponding energy function. We now rewrite the same closed formulas
without using the maximum symbol.

For a nonempty finite list \(z_1,\dots,z_N\), define
\begin{equation}\label{eq:M_r}
	M_1:=z_1,
	\qquad
	M_r:=M_{r-1}+(z_r-M_{r-1})_+
	\qquad (2\leq r\leq N),
\end{equation}
and set
\begin{equation}\label{eq:mathfrak_M}
	\mathfrak M(z_1,\dots,z_N):=M_N.
\end{equation}

For type \(A_n^{(1)}\), set
\begin{equation}\label{eq:T_j}
	T_0:=y_1,
	\qquad
	T_j:=T_{j-1}+y_{j+1}-x_j
	\qquad (1\leq j\leq n),
	\qquad
	y_{n+1}=y_0.
\end{equation}

For the six barred-coordinate types, define
\begin{equation}\label{eq:U_X_V_X_rec}
	\mathcal U_X^{\mathrm{rec}}
	:=
	\mathfrak M(\alpha_1^X,\dots,\alpha_n^X),
	\qquad
	\mathcal V_X^{\mathrm{rec}}
	:=
	\mathfrak M(\beta_1^X,\dots,\beta_n^X),
\end{equation}
where \(\alpha_r^X,\beta_r^X\) are those of
\eqref{eq:uniform-alpha-beta}.

\begin{theorem}[Recursive closed formulas]
	\label{thm:unified-max-free-recursive-closed-formulas}
Let \(X\) be one of the seven classical affine types, let \(B\) be
the corresponding level-\(l\) perfect crystal, and let
\(b_1,b_2\in B\). Then the following formulas hold.
	
	\begin{enumerate}[label=\textup{(\roman*)},leftmargin=7mm]
		\item In type \(A_n^{(1)}\),
		\[
		H_X(b_1\otimes b_2)
		=
		\mathfrak M(T_0,T_1,\dots,T_n).
		\]
		
		\item In types
		\(A_{2n}^{(2)},D_{n+1}^{(2)},C_n^{(1)}\),
		\[
		H_X(b_1\otimes b_2)
		=
		\frac{
			\Gamma_X
			\bigl(
			\mathcal U_X^{\mathrm{rec}}
			+
			\mathcal V_X^{\mathrm{rec}}
			\bigr)
			+
			\left|
			2\Delta_X
			+
			\Gamma_X
			\bigl(
			\mathcal U_X^{\mathrm{rec}}
			-
			\mathcal V_X^{\mathrm{rec}}
			\bigr)
			\right|
		}{2}.
		\]
		
		\item In types
		\(A_{2n-1}^{(2)},D_n^{(1)},B_n^{(1)}\),
		\[
		H_X(b_1\otimes b_2)
		=
		\frac{
			\mathcal U_X^{\mathrm{rec}}
			+
			\mathcal V_X^{\mathrm{rec}}
			+
			\left|
			\mathcal U_X^{\mathrm{rec}}
			-
			\mathcal V_X^{\mathrm{rec}}
			\right|
		}{2}.
		\]
	\end{enumerate}
\end{theorem}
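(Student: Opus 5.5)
The plan is to reduce every assertion to two elementary identities and then substitute into the definition \eqref{eq:uniform-energy-seven-types}. The first identity says that the recursion \eqref{eq:M_r} computes a running maximum. Since \(a+(b-a)_+=\max\{a,b\}\) for real \(a,b\), an induction on \(r\) gives \(M_r=\max\{z_1,\dots,z_r\}\). Indeed, \(M_1=z_1\), and if \(M_{r-1}=\max\{z_1,\dots,z_{r-1}\}\), then
\[
M_r=\max\{M_{r-1},z_r\}=\max\{z_1,\dots,z_r\}.
\]
Hence \(\mathfrak M(z_1,\dots,z_N)=\max_{1\le r\le N}z_r\). In particular, \(\mathcal U_X^{\mathrm{rec}}=\mathcal U_X\) and \(\mathcal V_X^{\mathrm{rec}}=\mathcal V_X\) by \eqref{eq:uniform-UV-seven-types}, because \(J_X=\{1,\dots,n\}\) for the six barred-coordinate types. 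The second identity is \(\max\{a,b\}=\tfrac12\bigl(a+b+|a-b|\bigr)\) for real \(a,b\).

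For part (i) the work is only reindexing. Unwinding \eqref{eq:T_j} gives
\[
T_j=y_1+\sum_{k=1}^{j}(y_{k+1}-x_k).
\]
Using \(B_j=\sum_{k=1}^{j}(y_k-x_k)\) from \eqref{eq:uniform-AB}, this telescopes to \(T_j=B_j+y_{j+1}=\alpha_j^X\) for \(0\le j\le n\), with the convention \(y_{n+1}=y_0\) of \eqref{eq:uniform-alpha-beta}. For \(j=0\) this reads \(T_0=y_1=B_0+y_1\). In type \(A_n^{(1)}\) we have \(\Delta_X=0\), \(\Gamma_X=1\) and \(\alpha_r^X=\beta_r^X\), so \(H_X=\max_{0\le r\le n}\alpha_r^X\), and the first identity then gives \(H_X=\mathfrak M(T_0,\dots,T_n)\).

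For parts (ii) and (iii), apply the second identity to
\[
a=\Delta_X+\Gamma_X\mathcal U_X,\qquad b=-\Delta_X+\Gamma_X\mathcal V_X.
\]
Then \(a+b=\Gamma_X(\mathcal U_X+\mathcal V_X)\) and \(a-b=2\Delta_X+\Gamma_X(\mathcal U_X-\mathcal V_X)\). This is exactly the formula in (ii) once \(\mathcal U_X,\mathcal V_X\) are replaced by their recursive versions. Specializing to \(\Delta_X=0\) and \(\Gamma_X=1\) from \eqref{eq:uniform-delta-c} yields (iii).

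There is no real obstacle here. The only point to watch is the index bookkeeping in (i): one must check that the recursion starts at \(T_0=y_1\), corresponding to \(r=0\), and that the last term uses \(y_{n+1}=y_0\), so that all candidates \(0\le r\le n\) appear.
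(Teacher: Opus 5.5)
Your proposal is correct and follows the paper's proof essentially verbatim: you identify $\mathfrak M$ with the running maximum via $p+(q-p)_+=\max\{p,q\}$, telescope to get $T_j=B_j+y_{j+1}=\alpha_j^X=\beta_j^X$ in type $A_n^{(1)}$, and then apply $\max\{u,v\}=\tfrac12(u+v+|u-v|)$ with $\Delta_X,\Gamma_X$ from \eqref{eq:uniform-delta-c}. Your explicit unwinding of the recursion for $T_j$ and the computation of $a\pm b$ simply spell out steps the paper states without detail.
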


\begin{proof}
	For real numbers \(p,q\), one has
	\(p+(q-p)_+=\max\{p,q\}\). Hence
	\(M_r=\max\{M_{r-1},z_r\}\) for $2\leq r\leq N$, and induction on \(r\) gives $M_r=\max\{z_1,\dots,z_r\}$ $(1\leq r\leq N)$.
	Therefore, we have $\mathfrak M(z_1,\dots,z_N)=\max\{z_1,\dots,z_N\}$.

	For type \(A_n^{(1)}\), \eqref{eq:T_j},
	\eqref{eq:uniform-AB}, and \eqref{eq:uniform-alpha-beta} give
	\(T_j=B_j+y_{j+1}=\alpha_j^X=\beta_j^X\) for
	\(0\leq j\leq n\). Thus, by
	\eqref{eq:uniform-UV-seven-types},
	\eqref{eq:uniform-delta-c}, and
	\eqref{eq:uniform-energy-seven-types},
	\[
	H_X(b_1\otimes b_2)
	=
	\mathfrak M(T_0,T_1,\dots,T_n),
	\]
	which proves \textup{(i)}.
	
	For the six barred-coordinate types,
	\eqref{eq:uniform-index-set}, \eqref{eq:M_r},
	\eqref{eq:mathfrak_M}, \eqref{eq:U_X_V_X_rec}, and
	\eqref{eq:uniform-UV-seven-types} give $\mathcal U_X^{\mathrm{rec}}=\mathcal U_X$ and $\mathcal V_X^{\mathrm{rec}}=\mathcal V_X$. Using \eqref{eq:uniform-delta-c} and applying the identity
\(\max\{u,v\}=(u+v+|u-v|)/2\) to
\eqref{eq:uniform-energy-seven-types}, we obtain
\textup{(ii)} and \textup{(iii)}.
\end{proof}

\subsection{Low-rank examples of energy functions}
\begin{example}[Energy function in type \(A_1^{(1)}\)]
	\label{ex:A1affine-energy-matrix}
	Let \(X=A_1^{(1)}\), corresponding to \(n=1\) in type
	\(A_n^{(1)}\). By \eqref{eq:An-perfect-set}, the level-\(l\)
	perfect crystal is
	\[
	B
	=
	\left\{
	(x_0,x_1)\in\mathbb Z_{\geq0}^2
	\ \middle|\
	x_0+x_1=l
	\right\},
	\]
	so \(|B|=l+1\). Let
	\(b_1=(x_0,x_1), b_2=(y_0,y_1)\in B\).
	By \eqref{eq:T_j},
	\(T_0=y_1\) and
	\[
	T_1
	=
	T_0+y_2-x_1
	=
	y_1+y_0-x_1
	=
	x_0,
	\]
	where \(y_2=y_0\) and
	\(x_0+x_1=y_0+y_1=l\). Hence, by \eqref{eq:M_r}-- \eqref{eq:mathfrak_M} and Theorem \ref{thm:unified-max-free-recursive-closed-formulas}, we have
	\[
	H_X(b_1\otimes b_2)
	=\mathfrak M(T_0,T_1)
	=
	y_1+(x_0-y_1)_+.
	\]
\end{example}

\begin{remark}
	Our formula agrees with the energy matrix in \cite[Section~3]{DHK}, up to
	the normalization convention. In \cite[Section~3]{DHK}, the level is denoted
	by \(n\), and the crystal elements \(b_i\), \(0\le i\le n\), satisfy
	\[
	\wt(b_i)=(2i-n)\Lambda_0+(n-2i)\Lambda_1.
	\]
	In our notation, with \(l=n\), the element \(b(a)=(l-a,a)\) has weight
	\(\wt(b(a))=(l-2a)\Lambda_0+(2a-l)\Lambda_1\). Hence \(b_i\) in
	\cite[Section~3]{DHK} corresponds to \(b(n-i)\) in our notation. Therefore
	our  energy gives
	\[
	 H_X\bigl(b(n-i)\otimes b(n-j)\bigr)
	=
	(n-j)+\bigl(i-(n-j)\bigr)_+
	=
	\max\{i,n-j\},
	\]
	which is exactly the formula \(H_n(b_i\otimes b_j)=\max\{i,n-j\}\) in
	\cite[Section~3]{DHK}. 
\end{remark}

\begin{example}[Energy function in type \(A_2^{(2)}\)]
	\label{ex:A2twisted-energy-matrix}
	Let \(X=A_2^{(2)}\), corresponding to \(n=1\) in type
	\(A_{2n}^{(2)}\). By \eqref{eq:A2n-ad-set}, the level-\(l\)
	perfect crystal is $B
	=
	\left\{
	(x\mid\ol x)
	\mid
	x,\ol x\in\mathbb Z_{\geq0},\
	x+\ol x\leq l
	\right\}$, 
	so \(|B|=(l+1)(l+2)/2\). Let
	\(b_1=(x\mid\ol x), b_2=(y\mid\ol y)\in B\). By \eqref{eq:uniform-delta-c},
	\eqref{eq:uniform-alpha-beta}, and
	\eqref{eq:U_X_V_X_rec},
	\[
	\Delta_X
	=
	y+\ol y-x-\ol x,
	\qquad
	\Gamma_X=2,
	\]
	and $\mathcal U_X^{\mathrm{rec}}
	=
	(\ol x-x)_+,
	\
	\mathcal V_X^{\mathrm{rec}}
	=
	(y-\ol y)_+$.
	Therefore, by
	Theorem~\ref{thm:unified-max-free-recursive-closed-formulas},
	the energy matrix indexed by \(B\) has entries
	\begin{equation}
		\label{eq:A2twisted-energy-coordinate}
		\begin{aligned}
			H_X(b_1\otimes b_2)
			=
			(\ol x-x)_+
			+
			(y-\ol y)_++
			\left|
			y+\ol y-x-\ol x
			+
			(\ol x-x)_+
			-
			(y-\ol y)_+
			\right|.
		\end{aligned}
	\end{equation}
\end{example}

\begin{example}[Energy function in type \(D_3^{(2)}\)]
	\label{ex:D3twisted-energy-matrix}
	Let \(X=D_3^{(2)}\), corresponding to \(n=2\) in type
	\(D_{n+1}^{(2)}\). By \eqref{eq:D2n-perfect-set}, the level-\(l\)
	perfect crystal is
	\[
	\begin{aligned}
		B
		=
		\bigl\{
		(x_1,x_2\mid x_0\mid\ol x_2,\ol x_1)
		\ \big|\
		x_0\in\{0,1\},\
		x_1,x_2,\ol x_1,\ol x_2\in\mathbb Z_{\geq0},\
		x_0+x_1+x_2+\ol x_1+\ol x_2\leq l
		\bigr\}.
	\end{aligned}
	\]
	Hence $|B|
	=
	\binom{l+4}{4}
	+
	\binom{l+3}{4}$.
	Let $b_1=(x_1,x_2\mid x_0\mid\ol x_2,\ol x_1), b_2=(y_1,y_2\mid y_0\mid\ol y_2,\ol y_1)\in B$. By \eqref{eq:uniform-delta-c},
	\eqref{eq:uniform-AB}, and \eqref{eq:uniform-alpha-beta},
	\[
	\Delta_X
	=
	y_0+y_1+y_2+\ol y_1+\ol y_2
	-
	x_0-x_1-x_2-\ol x_1-\ol x_2,
	\qquad
	\Gamma_X=2,
	\]
	\[
	\begin{aligned}
		\alpha_1^X
		=
		(\ol x_1-x_1)_+,\
		\beta_1^X
		=
		(y_1-\ol y_1)_+,\
		\alpha_2^X
		=
		\ol x_1-\ol y_1+(\ol x_2-x_2)_+,\
		\beta_2^X
		=
		y_1-x_1+(y_2-\ol y_2)_+.
	\end{aligned}
	\]
	By \eqref{eq:U_X_V_X_rec}, we have $\mathcal U_X^{\mathrm{rec}}
	=
	\alpha_1^X+(\alpha_2^X-\alpha_1^X)_+$, $\mathcal V_X^{\mathrm{rec}}
	=
	\beta_1^X+(\beta_2^X-\beta_1^X)_+$.
	Since \(\Gamma_X=2\), Theorem~
	\ref{thm:unified-max-free-recursive-closed-formulas} gives
	\begin{align}
		H_X(b_1\otimes b_2)
		&=
		(\ol x_1-x_1)_+
		+
		\left(
		\ol x_1-\ol y_1+(\ol x_2-x_2)_+
		-
		(\ol x_1-x_1)_+
		\right)_+
		\notag\\
		&\quad
		+
		(y_1-\ol y_1)_+
		+
		\left(
		y_1-x_1+(y_2-\ol y_2)_+
		-
		(y_1-\ol y_1)_+
		\right)_+
		\notag\\
		&\quad
		+
		|
		y_0+y_1+y_2+\ol y_1+\ol y_2
		-
		x_0-x_1-x_2-\ol x_1-\ol x_2
		\notag\\
		&\qquad
		+
		(\ol x_1-x_1)_+
		+
		\left(
		\ol x_1-\ol y_1+(\ol x_2-x_2)_+
		-
		(\ol x_1-x_1)_+
		\right)_+
		\notag\\
		&\qquad
		-
		(y_1-\ol y_1)_+
		-
		\left(
		y_1-x_1+(y_2-\ol y_2)_+
		-
		(y_1-\ol y_1)_+
		\right)_+
		|.
		\label{eq:D3twisted-energy-expanded}
	\end{align}
\end{example}

\begin{example}[Energy function in type \(C_2^{(1)}\)]
	\label{ex:C2affine-energy-matrix}
	Let \(X=C_2^{(1)}\), corresponding to \(n=2\) in type
	\(C_n^{(1)}\). By \eqref{eq:Cn-perfect-set}, the level-\(l\)
	perfect crystal is
	\[
	\begin{aligned}
		B
		=
		\bigl\{
		(x_1,x_2\mid\ol x_2,\ol x_1)
		\ \big|\
		x_1,x_2,\ol x_1,\ol x_2\in\mathbb Z_{\geq0},\
		x_1+x_2+\ol x_1+\ol x_2\leq2l,\
		x_1+x_2+\ol x_1+\ol x_2\in2\mathbb Z
		\bigr\}.
	\end{aligned}
	\]
	Hence $|B|
	=
	\sum_{r=0}^{l}\binom{2r+3}{3}$.
	Let $b_1=(x_1,x_2\mid\ol x_2,\ol x_1), b_2=(y_1,y_2\mid\ol y_2,\ol y_1)\in B$. By \eqref{eq:uniform-delta-c},
	\eqref{eq:uniform-AB}, and \eqref{eq:uniform-alpha-beta},
	\[
	\Delta_X
	=
	\frac12
	\bigl(
	y_1+y_2+\ol y_1+\ol y_2
	-
	x_1-x_2-\ol x_1-\ol x_2
	\bigr),
	\qquad
	\Gamma_X=1,
	\]
	\[
	\begin{aligned}
		\alpha_1^X
		=
		(\ol x_1-x_1)_+,\
		\beta_1^X
		=
		(y_1-\ol y_1)_+,\
		\alpha_2^X
		=
		\ol x_1-\ol y_1+(\ol x_2-x_2)_+,\
		\beta_2^X
		=
		y_1-x_1+(y_2-\ol y_2)_+.
	\end{aligned}
	\]
	By \eqref{eq:U_X_V_X_rec}, we have $\mathcal U_X^{\mathrm{rec}}
	=
	\alpha_1^X+(\alpha_2^X-\alpha_1^X)_+$ and $\mathcal V_X^{\mathrm{rec}}
	=
	\beta_1^X+(\beta_2^X-\beta_1^X)_+$.
	Since \(\Gamma_X=1\), Theorem~
	\ref{thm:unified-max-free-recursive-closed-formulas} gives
	\begin{align}
		H_X(b_1\otimes b_2)
		&=
		\frac12
		\bigl[
		(\ol x_1-x_1)_+
		+
		\left(
		\ol x_1-\ol y_1+(\ol x_2-x_2)_+
		-
		(\ol x_1-x_1)_+
		\right)_+
		\notag\\
		&\quad
		+
		(y_1-\ol y_1)_+
		+
		\left(
		y_1-x_1+(y_2-\ol y_2)_+
		-
		(y_1-\ol y_1)_+
		\right)_+
		\notag\\
		&\quad
		+
		|
		y_1+y_2+\ol y_1+\ol y_2
		-
		x_1-x_2-\ol x_1-\ol x_2
		\notag\\
		&\qquad
		+
		(\ol x_1-x_1)_+
		+
		\left(
		\ol x_1-\ol y_1+(\ol x_2-x_2)_+
		-
		(\ol x_1-x_1)_+
		\right)_+
		\notag\\
		&\qquad
		-
		(y_1-\ol y_1)_+
		-
		\left(
		y_1-x_1+(y_2-\ol y_2)_+
		-
		(y_1-\ol y_1)_+
		\right)_+
		|
		\bigr].
		\label{eq:C2affine-energy-expanded}
	\end{align}
\end{example}

\begin{example}[Energy function in type \(A_5^{(2)}\)]
	\label{ex:A5twisted-energy-matrix}
	Let \(X=A_5^{(2)}\), corresponding to \(n=3\) in type
	\(A_{2n-1}^{(2)}\). By \eqref{eq:A2n-1-perfect-set}, the
	level-\(l\) perfect crystal is
	\[
	B
	=
	\{
	(x_1,x_2,x_3\mid\ol x_3,\ol x_2,\ol x_1)
	\mid
	x_i,\ol x_i\in\mathbb Z_{\geq0}\ (1\leq i\leq3),\
	\sum_{i=1}^{3}(x_i+\ol x_i)=l
	\},
	\]
	so \(|B|=\binom{l+5}{5}\). Let $b_1=(x_1,x_2,x_3\mid\ol x_3,\ol x_2,\ol x_1), b_2=(y_1,y_2,y_3\mid\ol y_3,\ol y_2,\ol y_1)\in B$. By \eqref{eq:uniform-delta-c},
	\eqref{eq:uniform-AB}, and \eqref{eq:uniform-alpha-beta}, we have $\Delta_X=0$, $\Gamma_X=1$,
	and
	\[
	\begin{aligned}
		\alpha_1^X
		&=
		\ol x_1-x_1,
		&
		\beta_1^X
		&=
		y_1-\ol y_1,
		\\
		\alpha_2^X
		&=
		\ol x_1-\ol y_1+(\ol x_2-x_2)_+,
		&
		\beta_2^X
		&=
		y_1-x_1+(y_2-\ol y_2)_+,
		\\
		\alpha_3^X
		&=
		\ol x_1-\ol y_1+\ol x_2-\ol y_2
		+(\ol x_3-x_3)_+,
		&
		\beta_3^X
		&=
		y_1-x_1+y_2-x_2+(y_3-\ol y_3)_+.
	\end{aligned}
	\]

	Hence, by \eqref{eq:U_X_V_X_rec},
	\begin{align*}
		\mathcal U_X^{\mathrm{rec}}
		&=
		\alpha_1^X+(\alpha_2^X-\alpha_1^X)_+
		+
		(
		\alpha_3^X-\alpha_1^X-(\alpha_2^X-\alpha_1^X)_+
		)_+,\\
		\mathcal V_X^{\mathrm{rec}}
		&=
		\beta_1^X+(\beta_2^X-\beta_1^X)_+
		+
		(
		\beta_3^X-\beta_1^X-(\beta_2^X-\beta_1^X)_+
		)_+.
	\end{align*}
	Therefore Theorem~
	\ref{thm:unified-max-free-recursive-closed-formulas} gives
\begin{align}
	H_X(b_1\otimes b_2)
	&=
	\frac12
	\bigl[
	\ol x_1-x_1
	+
	(
	x_1-\ol y_1+(\ol x_2-x_2)_+
	)_+
	\notag\\
	&\quad
	+
	(
	x_1-\ol y_1+\ol x_2-\ol y_2
	+
	(\ol x_3-x_3)_+
	-
	(
	x_1-\ol y_1+(\ol x_2-x_2)_+
	)_+
	)_+
	\notag\\
	&\quad
	+
	y_1-\ol y_1
	+
	(
	\ol y_1-x_1+(y_2-\ol y_2)_+
	)_+
	\notag\\
	&\quad
	+
	(
	\ol y_1-x_1+y_2-x_2
	+
	(y_3-\ol y_3)_+
	-
	(
	\ol y_1-x_1+(y_2-\ol y_2)_+
	)_+
	)_+
	\notag\\
	&\quad
	+
	|
	\ol x_1-x_1
	+
	(
	x_1-\ol y_1+(\ol x_2-x_2)_+
	)_+
	\notag\\
	&\qquad
	+
	(
	x_1-\ol y_1+\ol x_2-\ol y_2
	+
	(\ol x_3-x_3)_+
	-
	(
	x_1-\ol y_1+(\ol x_2-x_2)_+
	)_+
	)_+
	\notag\\
	&\qquad
	-
	y_1+\ol y_1
	-
	(
	\ol y_1-x_1+(y_2-\ol y_2)_+
	)_+
	\notag\\
	&\qquad
	-
	(
	\ol y_1-x_1+y_2-x_2
	+
	(y_3-\ol y_3)_+
	-
	(
	\ol y_1-x_1+(y_2-\ol y_2)_+
	)_+
	)_+
	|
	\bigr].
	\label{eq:A5twisted-energy-expanded}
\end{align}
\end{example}

\begin{example}[Energy function in type \(D_4^{(1)}\)]
	\label{ex:D4affine-energy-matrix}
	Let \(X=D_4^{(1)}\), corresponding to \(n=4\) in type
	\(D_n^{(1)}\). By \eqref{eq:Dn-perfect-set}, the level-\(l\)
	perfect crystal is
	\[
	\begin{aligned}
		B
		=
		\bigl\{
		(x_1,x_2,x_3,x_4\mid
		\ol x_4,\ol x_3,\ol x_2,\ol x_1)
		\ \big|\
		x_i,\ol x_i\in\mathbb Z_{\geq0}\ (1\leq i\leq4),
		\sum_{i=1}^{4}(x_i+\ol x_i)=l,\
		x_4=0\ \text{or}\ \ol x_4=0
		\bigr\}.
	\end{aligned}
	\]
	Hence $|B|
	=
	\binom{l+7}{7}
	-
	\binom{l+5}{7}$.
	Let $b_1=
	(x_1,x_2,x_3,x_4\mid
	\ol x_4,\ol x_3,\ol x_2,\ol x_1), b_2=
	(y_1,y_2,y_3,y_4\mid
	\ol y_4,\ol y_3,\ol y_2,\ol y_1)\in B$. By \eqref{eq:uniform-delta-c},
	\eqref{eq:uniform-AB}, and \eqref{eq:uniform-alpha-beta}, we have $\Delta_X=0$, $\Gamma_X=1$,
	and
	\begingroup
	\small
	\[
	\begin{aligned}
		\alpha_1^X
		&=
		\ol x_1-x_1,
		&
		\beta_1^X
		&=
		y_1-\ol y_1,
		\\
		\alpha_2^X
		&=
		\ol x_1-\ol y_1+(\ol x_2-x_2)_+,
		&
		\beta_2^X
		&=
		y_1-x_1+(y_2-\ol y_2)_+,
		\\
		\alpha_3^X
		&=
		\ol x_1-\ol y_1+\ol x_2-\ol y_2
		+(\ol x_3-x_3)_+,
		&
		\beta_3^X
		&=
		y_1-x_1+y_2-x_2+(y_3-\ol y_3)_+,
		\\
		\alpha_4^X
		&=
		\ol x_1-\ol y_1+\ol x_2-\ol y_2
		+\ol x_3-\ol y_3+x_4-\ol y_4,
		&
		\beta_4^X
		&=
		y_1-x_1+y_2-x_2+y_3-x_3+\ol y_4-x_4.
	\end{aligned}
	\]
	\endgroup	
	Hence, by \eqref{eq:M_r}, \eqref{eq:mathfrak_M}, and
	\eqref{eq:U_X_V_X_rec},
	\begingroup
	\small
	\begin{align*}
		\mathcal U_X^{\mathrm{rec}}
		&=
		\alpha_1^X+(\alpha_2^X-\alpha_1^X)_++
		(
		\alpha_3^X-\alpha_1^X-(\alpha_2^X-\alpha_1^X)_+
		)_+\\
		&\quad+
		(
		\alpha_4^X-\alpha_1^X-(\alpha_2^X-\alpha_1^X)_+
		-
		(
		\alpha_3^X-\alpha_1^X-(\alpha_2^X-\alpha_1^X)_+
		)_+
		)_+,
		\\
		\mathcal V_X^{\mathrm{rec}}
		&=
		\beta_1^X+(\beta_2^X-\beta_1^X)_++
		(
		\beta_3^X-\beta_1^X-(\beta_2^X-\beta_1^X)_+
		)_+\\
		&\quad+
		(
		\beta_4^X-\beta_1^X-(\beta_2^X-\beta_1^X)_+
		-
		(
		\beta_3^X-\beta_1^X-(\beta_2^X-\beta_1^X)_+
		)_+
		)_+.
	\end{align*}
	\endgroup
	Thus Theorem~
	\ref{thm:unified-max-free-recursive-closed-formulas} gives
\begin{align}
	H_X(b_1\otimes b_2)
	&=
	\frac12
	[
	\ol x_1-x_1
	+
	(
	x_1-\ol y_1+(\ol x_2-x_2)_+
	)_+
	\notag\\
	&\quad+
	(
	x_1-\ol y_1+\ol x_2-\ol y_2+(\ol x_3-x_3)_+
	-
	(
	x_1-\ol y_1+(\ol x_2-x_2)_+
	)_+
	)_+
	\notag\\
	&\quad+
	(
	x_1-\ol y_1+\ol x_2-\ol y_2+\ol x_3-\ol y_3+x_4-\ol y_4
	-
	(
	x_1-\ol y_1+(\ol x_2-x_2)_+
	)_+
	\notag\\
	&\qquad
	-
	(
	x_1-\ol y_1+\ol x_2-\ol y_2+(\ol x_3-x_3)_+
	-
	(
	x_1-\ol y_1+(\ol x_2-x_2)_+
	)_+
	)_+
	)_+
	\notag\\
	&\quad+
	y_1-\ol y_1
	+
	(
	\ol y_1-x_1+(y_2-\ol y_2)_+
	)_+
	\notag\\
	&\quad+
	(
	\ol y_1-x_1+y_2-x_2+(y_3-\ol y_3)_+
	-
	(
	\ol y_1-x_1+(y_2-\ol y_2)_+
	)_+
	)_+
	\notag\\
	&\quad+
	(
	\ol y_1-x_1+y_2-x_2+y_3-x_3+\ol y_4-x_4
	-
	(
	\ol y_1-x_1+(y_2-\ol y_2)_+
	)_+
	\notag\\
	&\qquad
	-
	(
	\ol y_1-x_1+y_2-x_2+(y_3-\ol y_3)_+
	-
	(
	\ol y_1-x_1+(y_2-\ol y_2)_+
	)_+
	)_+
	)_+
	\notag\\
	&\quad+
	|
	\ol x_1-x_1
	+
	(
	x_1-\ol y_1+(\ol x_2-x_2)_+
	)_+
	\notag\\
	&\qquad+
	(
	x_1-\ol y_1+\ol x_2-\ol y_2+(\ol x_3-x_3)_+
	-
	(
	x_1-\ol y_1+(\ol x_2-x_2)_+
	)_+
	)_+
	\notag\\
	&\qquad+
	(
	x_1-\ol y_1+\ol x_2-\ol y_2+\ol x_3-\ol y_3+x_4-\ol y_4
	-
	(
	x_1-\ol y_1+(\ol x_2-x_2)_+
	)_+
	\notag\\
	&\qquad
	-
	(
	x_1-\ol y_1+\ol x_2-\ol y_2+(\ol x_3-x_3)_+
	-
	(
	x_1-\ol y_1+(\ol x_2-x_2)_+
	)_+
	)_+
	)_+
	\notag\\
	&\qquad-
	y_1+\ol y_1
	-
	(
	\ol y_1-x_1+(y_2-\ol y_2)_+
	)_+
	\notag\\
	&\qquad-
	(
	\ol y_1-x_1+y_2-x_2+(y_3-\ol y_3)_+
	-
	(
	\ol y_1-x_1+(y_2-\ol y_2)_+
	)_+
	)_+
	\notag\\
	&\qquad-
	(
	\ol y_1-x_1+y_2-x_2+y_3-x_3+\ol y_4-x_4
	-
	(
	\ol y_1-x_1+(y_2-\ol y_2)_+
	)_+
	\notag\\
	&\qquad
	-
	(
	\ol y_1-x_1+y_2-x_2+(y_3-\ol y_3)_+
	-
	(
	\ol y_1-x_1+(y_2-\ol y_2)_+
	)_+
	)_+
	)_+
	|
	].
	\label{eq:D4affine-energy-matrix-formula}
\end{align}
\end{example}

\begin{example}[Energy function in type \(B_3^{(1)}\)]
	\label{ex:B3affine-energy-matrix}
	Let \(X=B_3^{(1)}\), corresponding to \(n=3\) in type
	\(B_n^{(1)}\). By \eqref{eq:Bn-perfect-set}, the level-\(l\)
	perfect crystal is
	\[
	\begin{aligned}
		B
		=
		\bigl\{
		(x_1,x_2,x_3\mid x_0\mid\ol x_3,\ol x_2,\ol x_1)
		\ \big|\
		x_0\in\{0,1\},\
		x_i,\ol x_i\in\mathbb Z_{\geq0}\ (1\leq i\leq3),\
		x_0+\sum_{i=1}^{3}(x_i+\ol x_i)=l
		\bigr\}.
	\end{aligned}
	\]
	Hence $|B|
	=
	\binom{l+5}{5}
	+
	\binom{l+4}{5}$.
	Let $b_1=
	(x_1,x_2,x_3\mid x_0\mid\ol x_3,\ol x_2,\ol x_1), b_2=
	(y_1,y_2,y_3\mid y_0\mid \ol y_3,\ol y_2,\ol y_1)\in B$. By \eqref{eq:uniform-delta-c},
	\eqref{eq:uniform-AB}, and \eqref{eq:uniform-alpha-beta}, we have $\Delta_X=0$, $\Gamma_X=1$,
	and
	\begingroup
	\small
	\[
	\begin{aligned}
		\alpha_1^X
		&=
		\ol x_1-x_1,
		&
		\beta_1^X
		&=
		y_1-\ol y_1,
		\\
		\alpha_2^X
		&=
		\ol x_1-\ol y_1+(\ol x_2-x_2)_+,
		&
		\beta_2^X
		&=
		y_1-x_1+(y_2-\ol y_2)_+,
		\\
		\alpha_3^X
		&=
		\ol x_1-\ol y_1+\ol x_2-\ol y_2
		+(\ol x_3-x_3)_+,
		&
		\beta_3^X
		&=
		y_1-x_1+y_2-x_2+(y_3-\ol y_3)_+.
	\end{aligned}
	\]
	\endgroup
	Hence, by \eqref{eq:M_r}, \eqref{eq:mathfrak_M}, and
	\eqref{eq:U_X_V_X_rec},
	\begin{align*}
		\mathcal U_X^{\mathrm{rec}}
		&=
		\alpha_1^X+(\alpha_2^X-\alpha_1^X)_+
		+
		(
		\alpha_3^X-\alpha_1^X-(\alpha_2^X-\alpha_1^X)_+
		)_+,\\
		\mathcal V_X^{\mathrm{rec}}
		&=
		\beta_1^X+(\beta_2^X-\beta_1^X)_+
		+
		(
		\beta_3^X-\beta_1^X-(\beta_2^X-\beta_1^X)_+
		)_+.
	\end{align*}
	Then Theorem~
	\ref{thm:unified-max-free-recursive-closed-formulas} gives
\begin{align}
	H_X(b_1\otimes b_2)
	&=
	\frac12
	[
	\ol x_1-x_1
	+
	(
	x_1-\ol y_1+(\ol x_2-x_2)_+
	)_+
	\notag\\
	&\quad+
	(
	x_1-\ol y_1+\ol x_2-\ol y_2+(\ol x_3-x_3)_+
	-
	(
	x_1-\ol y_1+(\ol x_2-x_2)_+
	)_+
	)_+
	\notag\\
	&\quad+
	y_1-\ol y_1
	+
	(
	\ol y_1-x_1+(y_2-\ol y_2)_+
	)_+
	\notag\\
	&\quad+
	(
	\ol y_1-x_1+y_2-x_2+(y_3-\ol y_3)_+
	-
	(
	\ol y_1-x_1+(y_2-\ol y_2)_+
	)_+
	)_+
	\notag\\
	&\quad+
	|
	\ol x_1-x_1
	+
	(
	x_1-\ol y_1+(\ol x_2-x_2)_+
	)_+
	\notag\\
	&\qquad+
	(
	x_1-\ol y_1+\ol x_2-\ol y_2+(\ol x_3-x_3)_+
	-
	(
	x_1-\ol y_1+(\ol x_2-x_2)_+
	)_+
	)_+
	\notag\\
	&\qquad-
	y_1+\ol y_1
	-
	(
	\ol y_1-x_1+(y_2-\ol y_2)_+
	)_+
	\notag\\
	&\qquad-
	(
	\ol y_1-x_1+y_2-x_2+(y_3-\ol y_3)_+
	-
	(
	\ol y_1-x_1+(y_2-\ol y_2)_+
	)_+
	)_+
	|
	].
	\label{eq:B3affine-energy-expanded}
\end{align}
\end{example}

\section{The character formulas from the local energies}
\label{sec:effective-character-algorithm}
In this section, we give closed formulas for the affine characters in terms of
the coordinates of perfect crystals by using the local energy functions.

\subsection{The KMN character formulas}

We recall the path realization of the highest weight crystal $B(\lambda)$.
\begin{theorem}[{\cite{KMN1}}]\label{thm:Blambda_isomorphism}
	Let \(B\) be a perfect crystal of level \(l\), and let
	\(\lambda\in(P_{\mathrm{cl}}^+)_l\). Then there exists a \(U_q'(\mathfrak g)\)-crystal isomorphism
	\begin{equation*}
		\begin{aligned}
			\Psi :B(\lambda) \stackrel{\sim}{\longrightarrow}
			B(\ep(b_\lambda)) \otimes B &
			&\text{given by}& &  u_\lambda  \longmapsto  u_{\ep(b_\lambda)} \otimes b_\lambda ,
		\end{aligned}
	\end{equation*}
	where $u_\lambda$ is the highest weight vector in $B(\lambda)$ and $b_\lambda$ is the unique vector in $B$ such that $\varphi(b_{\lambda})=\lambda $.
\end{theorem}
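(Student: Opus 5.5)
The plan is to obtain the isomorphism from the general theory of crystal bases of tensor products, in which the perfectness axioms of Definition~\ref{def:perfect-crystal} force a unique highest weight element. Set $\mu:=\varepsilon(b_\lambda)$. Since $\langle c,\varepsilon(b)\rangle\ge l$ for all $b$ and $\langle c,\varphi(b_\lambda)\rangle=l$, the relation $\langle c,\varphi(b)\rangle=\langle c,\varepsilon(b)\rangle$ (from $\wt(b)$ having level $0$) shows that $\mu$ has level exactly $l$. It is dominant because $\varepsilon_i\ge0$. Hence $\mu\in(P_{\mathrm{cl}}^+)_l$, and $B(\mu)$ makes sense. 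By perfectness, $b_\lambda$ is then the unique element $b^\mu$ with $\varepsilon(b)=\mu$.

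First, I would realize $B(\mu)\otimes B$ as the crystal base of the module $V(\mu)\otimes W$, where $W$ is the finite-dimensional $U_q'(\mathfrak g)$-module whose crystal is $B$ (axiom (1)). Here $V(\mu)$ is regarded as a $U_q'(\mathfrak g)$-module with classical weights. Kashiwara's tensor product theorem says $V(\mu)\otimes W$ has crystal base $B(\mu)\otimes B$. This module is integrable and its weights are bounded above in the sense of category $\mathcal O$, which becomes available after lifting to $P$ using an affinization or degree grading. Hence it is completely reducible into irreducible highest weight modules, and correspondingly $B(\mu)\otimes B$ is a disjoint union of crystals $B(\nu)$. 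Each component is generated by an element $u\otimes b$ with $\tilde e_i(u\otimes b)=0$ for all $i$.

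Second, I would classify these highest weight elements using the tensor rule~\eqref{eq:tensor-rule}. The condition $\tilde e_i(u\otimes b)=0$ for all $i$ forces $u=u_\mu$ and $\varepsilon_i(b)\le\langle h_i,\mu\rangle$ for all $i$, that is, $\varepsilon(b)\le\mu$ coefficientwise. Taking levels gives $l\le\langle c,\varepsilon(b)\rangle\le\langle c,\mu\rangle=l$, so $\varepsilon(b)=\mu$. Uniqueness of the minimal element then gives $b=b_\lambda$. Conversely, $\varepsilon_i(u_\mu\otimes b_\lambda)=\max(0,\varepsilon_i(b_\lambda)-\langle h_i,\mu\rangle)=0$. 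Thus $u_\mu\otimes b_\lambda$ is the unique highest weight element, and its weight is
\[
\mu+\wt(b_\lambda)=\varepsilon(b_\lambda)+\varphi(b_\lambda)-\varepsilon(b_\lambda)=\lambda.
\]
So $B(\mu)\otimes B\cong B(\lambda)$ with $u_\lambda\mapsto u_\mu\otimes b_\lambda$.

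The main obstacle is the first step: justifying complete reducibility and the highest-weight decomposition for a tensor product involving the finite-dimensional $U_q'$-module $W$, which has no $d$-grading. I would handle this as in \cite{KMN1}. Pass to the affinization $W_{\mathrm{aff}}=W\otimes\mathbb Q(q)[z,z^{-1}]$, or equivalently work in the category of $U_q'$-modules whose $P_{\mathrm{cl}}$-weights are bounded above, where integrable modules are semisimple. Then compare crystal bases at $q=0$. Everything after this step is the elementary level count above.
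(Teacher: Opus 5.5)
The paper gives no proof of this statement; it quotes it from \cite{KMN1}. Your preliminary step is correct: $\mu=\veps(b_\lambda)$ lies in $(P_{\mathrm{cl}}^+)_l$. Your second step is also correct, and it is the standard crystal computation: a highest weight element of $B(\mu)\otimes B$ must be $u_\mu\otimes b$ with $\veps(b)\le\mu$, and the level count together with axiom (5) forces $b=b_\lambda$.

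The genuine gap is your first step, and it is not a removable technicality. You claim that $V(\mu)\otimes W$ is a direct sum of irreducible highest weight modules, so that $B(\mu)\otimes B$ is a disjoint union of crystals $B(\nu)$, each generated by a highest weight element. Your justification never uses $\langle c,\mu\rangle=l$, and in that generality the claim is false.

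Take type $A_1^{(1)}$, the level-$2$ crystal $B=\{(2,0),(1,1),(0,2)\}$, and $\mu=\Lambda_0$. Every $b\in B$ has $\langle c,\veps(b)\rangle=2>1$, so $B(\Lambda_0)\otimes B$ has no highest weight element at all. Concretely, $\widetilde e_1(u_{\Lambda_0}\otimes(1,1))=u_{\Lambda_0}\otimes(0,2)$ and $\widetilde e_0(u_{\Lambda_0}\otimes(0,2))=u_{\Lambda_0}\otimes(1,1)$, which is an infinite raising cycle. Yet $V(\Lambda_0)\otimes W$ is an integrable $U_q'(\mathfrak g)$-module of exactly the kind your Step~1 treats.

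Neither of your proposed remedies helps:
\begin{itemize}
\item Since $\mathrm{cl}(\delta)=0$, the set $\mathrm{cl}(Q_+)$ is a group, so ``$P_{\mathrm{cl}}$-weights bounded above'' is a vacuous condition.
\item $W_{\mathrm{aff}}$ has weights $\wt(w)+k\delta$ for all $k\in\Z$, so $V(\mu)\otimes W_{\mathrm{aff}}$ is not in category $\mathcal O$.
\item A nontrivial finite-dimensional $W$ admits no $d$-grading.
\end{itemize}

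Without Step~1, uniqueness of the highest weight element proves nothing. You must also rule out connected components that contain no highest weight element, as in the example.

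What you can get cheaply is a reduction. Applying $\widetilde e_i^{\max}$ for an $i$ with $\widetilde e_iu\neq0$ strictly raises the first factor, so every component meets the finite set $u_\mu\otimes B$. On that set, $\widetilde e_i$ acts on the second factor exactly when $\veps_i(b)>\langle h_i,\mu\rangle$ and otherwise gives $0$, and by perfectness $b_\lambda$ is the unique sink.

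Two things would still remain:
\begin{itemize}
\item You would have to show that, because $\langle c,\mu\rangle=l$, these moves admit no cycles, so that every element raises to $u_\mu\otimes b_\lambda$.
\item You would then have to identify the resulting connected crystal with $B(\lambda)$, which requires module-theoretic input beyond the tensor rule.
\end{itemize}
This is the non-formal content of the result in \cite{KMN1}, and it is exactly where the equality of levels must be used. Your proposal contains no argument for it.
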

Set 
\begin{equation}\label{eq:inductive_path}
\lambda_0 =\lambda, \quad \lambda_{k+1}=\ep(b_{\lambda_k}), \quad b_0=b_{\lambda_0}, \quad b_{k+1}=b_{\lambda_{k+1}}.
\end{equation}
Applying the above crystal isomorphism repeatedly, we obtain a sequence of crystal isomorphisms
\begin{equation*}
	\begin{array}{ccccccc} B(\lambda) &
		\stackrel{\sim}{\longrightarrow} & B(\lambda_1) \otimes B &
		\stackrel{\sim}{\longrightarrow} & B(\lambda_2)\otimes B \otimes
		B & \stackrel{\sim}{\longrightarrow} & \cdots
		\\
		u_\lambda & \longmapsto & u_{\lambda_1} \otimes b_0 & \longmapsto &
		u_{\lambda_2} \otimes b_1 \otimes b_0 & \longmapsto & \cdots.
\end{array} \end{equation*}
In this process, we obtain an infinite sequence $\mathbf{p}_\lambda =\cdots b_2 \otimes b_1\otimes b_0 \in B^{\otimes \infty}
$, which is called the \textit{ground-state path of weight} $\lambda$.




\begin{lemma}\label{lem:AnCn-ground-state-path}
	Let \(\lambda=\sum_{i\in I}m_i\Lambda_i\in(P_{\mathrm{cl}}^+)_l\).	
	For type \(A_n^{(1)}\), the ground-state path associated with
	\(\lambda\) is \(\mathbf p_\lambda=(b_k)_{k\ge0}\), where $b_k=(m_{i+k})_{i\in I}$,
	and all indices are taken modulo \(n+1\). In particular,
	\(b_{k+n+1}=b_k\), \(b_0=b_\lambda\), and \(b_n=b^\lambda\).
	For type \(C_n^{(1)}\), the ground-state path associated with
	\(\lambda\) is constant: $b_k=(m_1,\dots,m_n\mid m_n,\dots,m_1)$ $(k\ge0)$.
\end{lemma}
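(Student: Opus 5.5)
The plan is to compute the recursion \eqref{eq:inductive_path} explicitly, using the minimal-element formulas of Lemma~\ref{lem:minimal-elements-all-types}. By definition, $b_k=b_{\lambda_k}$ with $\lambda_0=\lambda$ and $\lambda_{k+1}=\varepsilon(b_{\lambda_k})$. It therefore suffices to determine $\lambda_k$ for every $k$ and then read off $b_{\lambda_k}$ from Lemma~\ref{lem:minimal-elements-all-types}. Since every $\lambda_k$ lies in $(P_{\mathrm{cl}}^+)_l$ (by perfectness, $\varepsilon(b_\lambda)$ has level $l$ and is dominant), that lemma applies at each step.

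\emph{Type $A_n^{(1)}$.} Write $\lambda_k=\sum_{i\in I}m^{(k)}_i\Lambda_i$. Lemma~\ref{lem:minimal-elements-all-types}(1) gives $b_{\lambda_k}=(m^{(k)}_i)_{i\in I}$. Then \eqref{eq:An-perfect-eps-phi} gives
\[
\varepsilon(b_{\lambda_k})=\sum_{i\in I}m^{(k)}_{i+1}\Lambda_i,
\]
so $m^{(k+1)}_i=m^{(k)}_{i+1}$. Induction on $k$ yields $m^{(k)}_i=m_{i+k}$, hence $b_k=(m_{i+k})_{i\in I}$ with indices taken modulo $n+1$. Periodicity $b_{k+n+1}=b_k$ follows because shifting by $n+1$ is trivial modulo $n+1$. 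The identity $b_0=b_\lambda$ is the case $k=0$. For $b_n$, we get $(m_{i+n})_i=(m_{i-1})_i$, which equals $b^\lambda$ by Lemma~\ref{lem:minimal-elements-all-types}(1).

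\emph{Type $C_n^{(1)}$.} Lemma~\ref{lem:minimal-elements-all-types}(4) gives $b_\lambda=b^\lambda=(m_1,\dots,m_n\mid m_n,\dots,m_1)$. Since $\varepsilon(b^\lambda)=\lambda$ by definition of $b^\lambda$, we get $\lambda_1=\varepsilon(b_\lambda)=\varepsilon(b^\lambda)=\lambda$. Induction then gives $\lambda_k=\lambda$ and $b_k=b_\lambda$ for all $k$. As a check, this can also be verified directly: \eqref{eq:common-internal-eps-phi} and \eqref{eq:Cn-perfect-f-eps-phi} give $\varepsilon_i=m_i$ for $1\le i\le n$ and $\varepsilon_0=l-\sum_i m_i=m_0$, because all $(x_j-\ol x_j)_+$ terms vanish.

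No step is a real obstacle; the only care needed is the modular index bookkeeping in the $A_n^{(1)}$ case, where one must check that the order of the shift matches $b^\lambda=(m_{i-1})_i$.
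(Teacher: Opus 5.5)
Your proof is correct and follows the paper's argument. In type $A_n^{(1)}$ you run the same induction $\lambda_{k+1}=\varepsilon(b_{\lambda_k})$, using $\varepsilon_i(b)=x_{i+1}$ and Lemma~\ref{lem:minimal-elements-all-types}(1). In type $C_n^{(1)}$ your main step is a small shortcut: you get $\varepsilon(b_\lambda)=\lambda$ directly from $b_\lambda=b^\lambda$ and the defining property $\varepsilon(b^\lambda)=\lambda$, whereas the paper recomputes $\varepsilon_i(b_0)$ from the coordinate formulas, which is exactly your ``check''; both routes are valid.
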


\begin{proof}
It is enough to determine the infinite sequence in \eqref{eq:inductive_path}.

For type $A_n^{(1)}$, by Lemma~\ref{lem:minimal-elements-all-types}, applied to
	\(\lambda_0=\sum_{i\in I}m_i\Lambda_i\), we have $b_0=b_{\lambda_0}=(m_i)_{i\in I}$.
	Using \eqref{eq:An-perfect-eps-phi}, we get
	\[
	\lambda_1
	=
	\varepsilon(b_0)
	=
	\sum_{i\in I}\varepsilon_i(b_0)\Lambda_i
	=
	\sum_{i\in I}m_{i+1}\Lambda_i.
	\]
	Applying Lemma~\ref{lem:minimal-elements-all-types} again gives $b_1=b_{\lambda_1}=(m_{i+1})_{i\in I}$.
	Assume inductively that
	\[
	\lambda_k=\sum_{i\in I}m_{i+k}\Lambda_i,
	\qquad
	b_k=b_{\lambda_k}=(m_{i+k})_{i\in I}.
	\]
	Then, by \eqref{eq:An-perfect-eps-phi}, we obtain $\lambda_{k+1}
	=
	\varepsilon(b_k)
	=
	\sum_{i\in I}\varepsilon_i(b_k)\Lambda_i
	=
	\sum_{i\in I}m_{i+k+1}\Lambda_i$.

	Hence Lemma~\ref{lem:minimal-elements-all-types} gives $b_{k+1}=b_{\lambda_{k+1}}=(m_{i+k+1})_{i\in I}$.
	Therefore $b_k=(m_{i+k})_{i\in I}$
	for all \(k\ge0\). 
	
	Since all indices are taken modulo \(n+1\), one has $b_{k+n+1}=(m_{i+k+n+1})_{i\in I}=(m_{i+k})_{i\in I}=b_k$.
	Moreover, by Lemma~\ref{lem:minimal-elements-all-types}, we have $b_n=(m_{i+n})_{i\in I}=(m_{i-1})_{i\in I}=b^\lambda$.	
	
For type $C_n^{(1)}$, by \eqref{eq:inductive_path},
\(\lambda_0=\lambda\), \(b_0=b_{\lambda_0}\),
\(\lambda_{k+1}=\varepsilon(b_{\lambda_k})\), and
\(b_{k+1}=b_{\lambda_{k+1}}\). By
Lemma~\ref{lem:minimal-elements-all-types},
\[
b_0=b_{\lambda_0}=(m_1,\dots,m_n\mid m_n,\dots,m_1).
\]
For this element, \(x_i=\ol x_i=m_i\) for \(1\le i\le n\). Hence
\eqref{eq:common-internal-eps-phi} gives
\(\varepsilon_i(b_0)=\ol x_i+(x_{i+1}-\ol x_{i+1})_+=m_i\) for
\(1\le i\le n-1\), while \eqref{eq:Cn-perfect-eps-phi-n} gives
\(\varepsilon_n(b_0)=\ol x_n=m_n\). Moreover,
\(S(b_0)=2\sum_{i=1}^n m_i=2(l-m_0)\), and therefore
\eqref{eq:Cn-perfect-eps-phi-0} gives $\varepsilon_0(b_0)
=
l-\frac12S(b_0)+(x_1-\ol x_1)_+
=
m_0$.
Thus \(\lambda_1=\varepsilon(b_0)=m_0\Lambda_0+\sum_{i=1}^n m_i\Lambda_i
=\lambda\), and hence \(b_1=b_{\lambda_1}=b_\lambda=b_0\). Inductively,
\[
\lambda_k=\lambda,
\qquad
b_k=b_\lambda=(m_1,\dots,m_n\mid m_n,\dots,m_1)
\]
for all \(k\ge0\). 
\end{proof}

By \cite[Sections~6.7--6.11]{KMN2}, the ground-state path is $(\cdots,b_\lambda,b_\lambda)$ for \(A_{2n}^{(2)}\) and \(D_{n+1}^{(2)}\), and is $(\cdots,b^\lambda,b_\lambda,b^\lambda,b_\lambda)$
for \(A_{2n-1}^{(2)}\), \(D_n^{(1)}\) and \(B_n^{(1)}\).
The minimal elements are those listed in Lemma~\ref{lem:minimal-elements-all-types}.

\vskip 2mm

Let $\lambda\in(P_{\mathrm{cl}}^+)_l$,
and let $\mathbf g_\lambda
=
\cdots\otimes g_2\otimes g_1\otimes g_0$
be the corresponding ground-state path. For \(u,v\in B\) and
\(k\geq0\), define the ground-state corrected local energy by
\begin{equation}\label{eq:uniform-ground-state-corrected-energy}
	\mathcal H_{\lambda,k}^X(u,v):=H_X(u\otimes v)-H_X(g_{k+1}\otimes g_k).
\end{equation}

By Lemma~\ref{lem:energy-uniqueness-general}, any other energy
function \(H_X'\) differs from \(H_X\) by an additive constant:
\(H_X'=H_X+C\) for some \(C\in\mathbb Z\). Hence 
\[
\begin{aligned}
	H_X'(u\otimes v)
	-
	H_X'(g_{k+1}\otimes g_k)
	=
	H_X(u\otimes v)+C
	-
	H_X(g_{k+1}\otimes g_k)-C=
	\mathcal H_{\lambda,k}^X(u,v).
\end{aligned}
\]

Thus, \(\mathcal H_{\lambda,k}^X\) is independent of the additive
normalization chosen for \(H_X\).

Using Lemma~\ref{lem:AnCn-ground-state-path}, the explicit
ground-state paths, and the closed energy formula
\eqref{eq:uniform-energy-seven-types}, we obtain
\begin{equation}\label{eq:ground-state-local-energy-values}
	H_X(g_{k+1}\otimes g_k)
	=
	\begin{cases}
		0,
		&
		X=A_{2n}^{(2)},D_{n+1}^{(2)},C_n^{(1)},
		\\[1mm]
		m_{k+1},
		&
		X=A_n^{(1)},
		\\[1mm]
		(-1)^k(m_1-m_0),
		&
		X=A_{2n-1}^{(2)},D_n^{(1)},B_n^{(1)}.
	\end{cases}
\end{equation}

\vskip 2mm

More explicitly, for $X=A_{2n}^{(2)},D_{n+1}^{(2)},C_n^{(1)}$,
one has $\mathcal H_{\lambda,k}^X(b_\lambda,b_\lambda)=0$.

For $X=A_{2n-1}^{(2)},D_n^{(1)},B_n^{(1)}$, we have $\mathcal H_{\lambda,2s}^X(b^\lambda,b_\lambda)=0$, $\mathcal H_{\lambda,2s+1}^X(b_\lambda,b^\lambda)=0$.

For \(X=A_n^{(1)}\), writing \(g_k=(m_{i+k})_{i\in I}\), we have
\(\mathcal H_{\lambda,k}^{A_n^{(1)}}(g_{k+1},g_k)=0\).

\medskip
Set
\begin{equation}\label{eq:Plambda}
\mathcal P(\lambda)
:=
\left\{
\mathbf p
=
\cdots\otimes p_2\otimes p_1\otimes p_0
\in B^{\otimes\infty}
\ \middle|\
p_k\in B,\quad
p_k=b_k
\text{ for all sufficiently large }k
\right\}.
\end{equation}
The elements of \(\mathcal P(\lambda)\) are called
\emph{\(\lambda\)-paths}. When we wish to emphasize the dependence on
the affine type \(X\), and hence on the corresponding perfect crystal
\(B\), we write
\(\mathcal P_X(\lambda):=\mathcal P(\lambda)\).

The following result gives the {\it path realization} of $B(\lambda)$.

\begin{proposition}[{\cite{KMN1}}]\label{prop:path-realization}
	There exists a \(U_q'(\mathfrak g)\)-crystal isomorphism
	\begin{equation*}
		\Psi_{\lambda} :B(\lambda) \stackrel{\sim}{\longrightarrow}
		\mathcal{P}({\lambda})\ \
		\text{given by}\ \ u_\lambda \longmapsto\mathbf{p}_\lambda,
	\end{equation*}
	where $u_{\lambda}$ is the highest weight vector in $B(\lambda)$.
\end{proposition}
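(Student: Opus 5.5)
The statement is the KMN path realization, so the plan is to build \(\Psi_\lambda\) as the inductive limit of the isomorphisms of Theorem~\ref{thm:Blambda_isomorphism}. Iterating that theorem along the sequence \eqref{eq:inductive_path} gives, for each \(N\ge1\), a \(U_q'(\mathfrak g)\)-crystal isomorphism
\[
\Psi^{(N)}:B(\lambda)\xrightarrow{\ \sim\ }B(\lambda_N)\otimes B^{\otimes N},\qquad u_\lambda\longmapsto u_{\lambda_N}\otimes b_{N-1}\otimes\cdots\otimes b_0 .
\]
The compatibility \(\Psi^{(N+1)}=(\Psi_{\lambda_N}\otimes\mathrm{id})\circ\Psi^{(N)}\) holds because \(u_{\lambda_N}\mapsto u_{\lambda_{N+1}}\otimes b_N\).

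\textbf{Step 1 (defining the map).} Given \(u\in B(\lambda)\), write \(u=\widetilde f_{i_1}\cdots\widetilde f_{i_m}u_\lambda\). I will show by induction on \(m\) that for all \(N\ge m\) the image \(\Psi^{(N)}(u)\) has the form \(u_{\lambda_N}\otimes p_{N-1}\otimes\cdots\otimes p_0\). The key point is the tensor rule \eqref{eq:tensor-rule} applied to \(u_{\lambda_N}\otimes b\) with \(\varphi(b)=\lambda_{N-1}\) and \(\varepsilon(b)=\lambda_N\). Since \(\varepsilon_i(u_{\lambda_N})=0\) and \(\varphi_i(u_{\lambda_N})=\langle h_i,\lambda_N\rangle=\varepsilon_i(b_{N-1})\), the condition \(\varphi_i(u_{\lambda_N})\le\varepsilon_i(\cdot)\) makes \(\widetilde f_i\) act on the right. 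Consequently, after \(m\) steps at most \(m\) of the rightmost factors have moved away from the ground state, and the leading factor \(u_{\lambda_N}\) is never touched when \(N\) is large. The same argument, applied inductively one factor at a time, shows that the \(\widetilde e_i\) also never reach \(u_{\lambda_N}\) when \(N\) is large. Setting \(\Psi_\lambda(u):=\cdots\otimes b_{N}\otimes p_{N-1}\otimes\cdots\otimes p_0\) is then consistent in \(N\) by the compatibility above, and it lands in \(\mathcal P(\lambda)\).

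\textbf{Step 2 (crystal structure on \(\mathcal P(\lambda)\) and morphism).} I will give \(\mathcal P(\lambda)\) the tensor-product crystal structure using the finite truncation \(u_{\lambda_N}\otimes p_{N-1}\otimes\cdots\otimes p_0\) for \(N\) large. Equivalently, one uses the signature rule with the ground-state tail contributing \(\lambda_N\); this is well defined by perfectness, since \(\varepsilon(b_N)=\lambda_{N+1}\) and \(\varphi(b_N)=\lambda_N\). The weight is \(\lambda+\sum_k(\wt p_k-\wt b_k)\). Injectivity follows from the injectivity of each \(\Psi^{(N)}\), and commutation with \(\widetilde e_i,\widetilde f_i\) is inherited from \(\Psi^{(N)}\).

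\textbf{Step 3 (surjectivity).} I expect this to be the main obstacle. Take \(\mathbf p\in\mathcal P(\lambda)\) with \(p_k=b_k\) for \(k\ge N\). Then \(u_{\lambda_N}\otimes p_{N-1}\otimes\cdots\otimes p_0\) lies in \(B(\lambda_N)\otimes B^{\otimes N}\cong B(\lambda)\), which is connected with highest weight element \(\Psi^{(N)}(u_\lambda)\). So \(\mathbf p\) is reached from the ground-state path by finitely many \(\widetilde f_i\), i.e.\ it lies in the image. Connectedness here relies on condition (5) of Definition~\ref{def:perfect-crystal}, so that \(B(\lambda_N)\otimes B\cong B(\lambda_{N-1})\) holds exactly, with no extra components. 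That exact isomorphism is precisely Theorem~\ref{thm:Blambda_isomorphism}, which I take as given. The remaining delicate point is checking that the crystal operators on the infinite path do not depend on the truncation \(N\), which is the same stabilization argument as in Step 1.
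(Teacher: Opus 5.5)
Your proposal is correct and is essentially the standard argument of \cite{KMN1}, which the paper cites without giving its own proof: you iterate Theorem~\ref{thm:Blambda_isomorphism} to get $\Psi^{(N)}$, use $u_{\lambda_N}\mapsto u_{\lambda_{N+1}}\otimes b_N$ for compatibility, and use $\varphi_i(u_{\lambda_N})=\varepsilon_i(b_{N-1})$ to see that $\widetilde f_i$ never acts on the leading factor once $p_{N-1}=b_{N-1}$. Two small corrections: surjectivity is not the main obstacle, since it follows immediately from the bijectivity of $\Psi^{(N)}$ together with your compatibility relation; and $\widetilde e_i$ can act on $u_{\lambda_N}$ (for example on the ground-state path itself), but it then returns $0$ consistently at every truncation level, so your conclusions still hold.
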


We have the following character formula.

\begin{theorem}[{\cite{KMN2}}]
	\label{theorem:wtchar}
	Let \(\lambda\in (P_{\mathrm{cl}}^+)_l\), and let $g=(g_k)_{k=0}^{\infty}$
	be the ground-state path associated with \(\lambda\). Let
	\(H: B\otimes B\to\Z\) be an energy function, and let $\mathbf{p}=(p_k)^\infty_{k=0}\in \mathcal P(\lambda)$
	be a \(\lambda\)-path. Then the affine weight of \(\mathbf{p}\) is given by
	\begin{align}
		\wt\,\mathbf{p}
		&=
		\lambda
		+
		\sum_{k=0}^{\infty}
		\bigl(\wt\,p_k-\wt\,g_k\bigr)
		-
		\frac{\delta}{d_0}
		\sum_{k=0}^{\infty}
		(k+1)
		\Bigl(
		H(p_{k+1}\otimes p_k)
		-
		H(g_{k+1}\otimes g_k)
		\Bigr)                                         
		\label{eq:firsteq}
	\end{align}
	Moreover, the character of the irreducible highest weight
	\(U_q(\mathfrak g)\)-module \(L(\lambda)\) is
	\begin{equation}
		\ch\,L(\lambda)
		=
		\sum_{\mathbf{p}\in\mathcal P(\lambda)}
		e^{\wt\,\mathbf{p}}.
		\label{eq:charweight}
	\end{equation}
\end{theorem}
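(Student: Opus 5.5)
The plan is to deduce both assertions from the path realization of Proposition~\ref{prop:path-realization}. The idea is to upgrade the classical weight on \(\mathcal P(\lambda)\) to an affine weight by adding the energy-graded degree, and then to check that the upgraded weight agrees with the affine weight transported from \(B(\lambda)\).

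For \(\mathbf p\in\mathcal P(\lambda)\), set
\[
D(\mathbf p):=\sum_{k\ge0}(k+1)\bigl(H(p_{k+1}\otimes p_k)-H(g_{k+1}\otimes g_k)\bigr).
\]
This is a finite sum, because \(p_k=g_k\) for large \(k\). Let \(\mathrm{wt}'(\mathbf p)\) denote the right-hand side of \eqref{eq:firsteq}.

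\textbf{Step 1 (normalizations).} Since \(\mathbf g_\lambda\) is the ground-state path, \(D(\mathbf g_\lambda)=0\) and \(\mathrm{wt}'(\mathbf g_\lambda)=\lambda\). Moreover, \(\cl(\mathrm{wt}'(\mathbf p))\) is the classical weight of \(\mathbf p\) from the tensor rule \eqref{eq:tensor-rule}. This holds because \(\sum_k(\wt p_k-\wt g_k)\) is the difference of classical weights, and the ground-state path has classical weight \(\lambda\) by the construction \eqref{eq:inductive_path}.

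\textbf{Step 2 (compatibility with \(\widetilde f_i\)).} Fix \(\mathbf p\) and \(i\) with \(\widetilde f_i\mathbf p\neq0\). Truncate to a finite tensor product \(p_N\otimes\cdots\otimes p_0\) with \(N\) large, where the tail acts trivially. By the signature rule, which comes from associativity of \eqref{eq:tensor-rule}, \(\widetilde f_i\) acts on a single factor \(p_j\).
\begin{itemize}
\item Within the pair \(p_{j+1}\otimes p_j\), the operator \(\widetilde f_i\) also acts on the right factor.
\item Within the pair \(p_j\otimes p_{j-1}\), it acts on the left factor.
\item All other pairs are untouched.
\end{itemize}
For \(i\neq0\), Definition~\ref{def:energy} gives that no \(H\)-value changes, so \(D\) is invariant. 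For \(i=0\), Definition~\ref{def:energy} gives \(H(p_{j+1}\otimes p_j)\mapsto H+1\) and \(H(p_j\otimes p_{j-1})\mapsto H-1\). The weighted change is therefore
\[
(j+1)-j=1
\]
(and just \(+1\) when \(j=0\)), so \(D(\widetilde f_0\mathbf p)=D(\mathbf p)+1\).

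Combining this with the classical weight change \(-\cl(\alpha_0)\), I get \(\mathrm{wt}'(\widetilde f_i\mathbf p)=\mathrm{wt}'(\mathbf p)-\alpha_i\) for all \(i\in I\). For \(i=0\) this uses \(\alpha_0=\cl(\alpha_0)\text{-lift}+\delta/d_0\) in \(P\), given the convention \(\alpha_0(d)=1\) and \(\delta=d_0\alpha_0+\cdots\). This identification of the \(\delta\)-component of \(\alpha_0\) with exactly \(\delta/d_0\) is the point I expect to require the most care, especially in type \(A_{2n}^{(2)}\) where \(d_0=2\). I would handle it by pairing both sides with \(d\).

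\textbf{Step 3 (conclusion).} Every element of \(B(\lambda)\), and hence of \(\mathcal P(\lambda)\), has the form \(\widetilde f_{i_1}\cdots\widetilde f_{i_r}u_\lambda\). Under \(\Psi_\lambda\), the affine weight of such an element is \(\lambda-\sum_s\alpha_{i_s}\). By Steps 1 and 2, this equals \(\mathrm{wt}'(\Psi_\lambda(\cdot))\), which proves \eqref{eq:firsteq}. Formula \eqref{eq:charweight} then follows because \(\ch L(\lambda)=\sum_{b\in B(\lambda)}e^{\wt b}\) and \(\Psi_\lambda\) is a weight-preserving bijection.

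The main subtle points are the following:
\begin{itemize}
\item the local-versus-global action claim in Step 2, for which I would prove a lemma by induction on the number of factors using \eqref{eq:tensor-rule};
\item checking that truncation is harmless, since the tail \(\cdots\otimes g_{N+1}\) contributes no \(0\)-signature beyond what \(u_{\lambda_{N+1}}\) absorbs.
\end{itemize}
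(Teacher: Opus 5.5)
The paper gives no proof of this theorem; it is quoted from \cite{KMN2}, building on \cite{KMN1}. Your proposal is a correct, self-contained proof of the standard kind, and here is what it does and what it buys over the citation.

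You treat the right-hand side of \eqref{eq:firsteq} as a function on \(\mathcal P(\lambda)\). The classical sum \(\sum_k(\wt p_k-\wt g_k)\) is lifted to \(P\) via \(\cl(\Lambda_i)\mapsto\Lambda_i\), i.e.\ with zero \(d\)-component. You normalize this function at the ground-state path and show it drops by exactly \(\alpha_i\) under every \(\widetilde f_i\). For this you use only the tensor rule \eqref{eq:tensor-rule} and the recursion \eqref{eq:energy}.

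Your local claims are right. If \(\widetilde f_i\) acts on \(p_j\), then it acts on the right factor of \(p_{j+1}\otimes p_j\) and on the left factor of \(p_j\otimes p_{j-1}\); this follows from the nesting of the signature cancellation. The bookkeeping \((j+1)-j=1\) is correct. So is the identity \(\alpha_0=\mathrm{lift}(\cl\alpha_0)+\delta/d_0\), which holds because \(\alpha_0(d)=(\delta/d_0)(d)=1\); this covers \(A_{2n}^{(2)}\) as well.

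What this route buys over the citation is transparency. It shows the formula holds for any function satisfying Definition~\ref{def:energy}, insensitive to the additive constant. It makes no use of the combinatorial \(R\)-matrix or affinizations.

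The one point to make precise is the truncation. Suppose \(p_k=g_k\) for \(k\ge M\). The identity \(\varphi_i(g_{k+1})=\langle h_i,\lambda_{k+1}\rangle=\varepsilon_i(g_k)\) (and likewise \(\varphi_i(u_{\lambda_{M+1}})=\varepsilon_i(g_M)\)) shows that the \(i\)-signature of the tail cancels down to \(\varphi_i(g_M)\) plus signs carried by \(g_M\). Hence \(\widetilde f_i\) always acts on some \(p_j\) with \(j\le M\), never further into the tail. The same equality gives that, even when \(j=M\), the operator acts on the right factor of the genuine pair \(g_{M+1}\otimes g_M\). With this stated, your signature lemma covers every case.
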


For \(X=A_n^{(1)}\), write $p_k=(x_{k,i})_{i\in I}, g_k=(g_{k,i})_{i\in I}$.
We define $\theta_{k,s}^{A_n^{(1)}}:=x_{k,s}-g_{k,s}$ for $1\le s\le n$.
If $\lambda=\sum_{i\in I}m_i\Lambda_i,\
\sum_{i\in I}m_i=l$, then the ground-state path is given by $g_{k,i}=m_{i+k}\ (i\in I)$,
where the indices are read modulo \(n+1\), thus $\theta_{k,s}^{A_n^{(1)}}
=
x_{k,s}-m_{s+k}
\ (1\le s\le n)$.

For the barred-coordinate types, write
\[
p_k=(x_{k,1},\ldots,x_{k,n}\mid \ol x_{k,n},\ldots,\ol x_{k,1}),
\]
with the possible additional \(0\)-coordinate \(x_{k,0}\) in the types
\(D_{n+1}^{(2)}\) and \(B_n^{(1)}\). We also write
\[
g_k=(x_{g_k,1},\ldots,x_{g_k,n}\mid
\ol x_{g_k,n},\ldots,\ol x_{g_k,1})
\]
for the \(k\)-th element of the ground-state path. Thus
\(x_{g_k,s}\) and \(\ol x_{g_k,s}\) denote respectively the \(s\)-th
unbarred and barred coordinates of \(g_k\). Define
\[
\theta_{k,s}^X
:=
(x_{k,s}-\ol x_{k,s})
-
(x_{g_k,s}-\ol x_{g_k,s})
\qquad
(1\le s\le n).
\]

Using the minimal elements in
Lemma~\ref{lem:minimal-elements-all-types} and the explicit ground-state
paths, the above definition gives the following formulas.

Explicitly, we have $\theta_{k,s}^X
=
x_{k,s}-\ol x_{k,s}$ 
for $X=A_{2n}^{(2)},\ D_{n+1}^{(2)},\ C_n^{(1)}$, and 
\[
\theta_{k,s}^X
=
\begin{cases}
	x_{k,1}-\ol x_{k,1}-(-1)^k(m_1-m_0),
	& s=1,\\[1mm]
	x_{k,s}-\ol x_{k,s},
	& 2\le s\le n,
\end{cases}
\]
for $X=A_{2n-1}^{(2)},\ B_n^{(1)}$. For \(X=D_n^{(1)}\), one has
\[
\theta_{k,s}^{D_n^{(1)}}
=
\begin{cases}
	x_{k,1}-\ol x_{k,1}-(-1)^k(m_1-m_0),
	& s=1,\\[1mm]
	x_{k,s}-\ol x_{k,s},
	& 2\le s\le n-1,\\[1mm]
	x_{k,n}-\ol x_{k,n}-(-1)^k(m_n-m_{n-1}),
	& s=n.
\end{cases}
\]

Define $\Xi_X(p_k):=\wt(p_k)-\wt(g_k)$.
In terms of the above \(\theta\)-coordinates, \(\Xi_X(p_k)\) is given as follows.

For \(X=A_n^{(1)}\), whose classical type is \(A_n\), one has
\begin{equation}\label{eq:xiAn1}
\Xi_{A_n^{(1)}}(p_k)
=
\sum_{i=1}^{n}
\left(
\sum_{s=1}^{i}\theta_{k,s}^{A_n^{(1)}}
\right)\alpha_i.
\end{equation}

For \(X=D_{n+1}^{(2)},B_n^{(1)}\), whose classical type is
\(B_n\), one has
\begin{equation}\label{eq:XiDnplus12-Bn1}
\Xi_X(p_k)
=
\sum_{i=1}^{n}
\left(
\sum_{s=1}^{i}\theta_{k,s}^{X}
\right)\alpha_i.
\end{equation}

For \(X=A_{2n}^{(2)}, C_n^{(1)}, A_{2n-1}^{(2)}\), whose classical type is \(C_n\),
one has
\begin{equation}\label{eq:XiA2n2-Cn1-A2nminus12}
\Xi_X(p_k)
=
\sum_{i=1}^{n-1}
\left(
\sum_{s=1}^{i}\theta_{k,s}^{X}
\right)\alpha_i
+
\frac12
\left(
\sum_{s=1}^{n}\theta_{k,s}^{X}
\right)\alpha_n.
\end{equation}

If \(X=D_n^{(1)}\), then
\begin{equation}\label{eq:XiDn1}
\begin{aligned}
	\Xi_{D_n^{(1)}}(p_k)
	=
	\sum_{i=1}^{n-2}
	\left(
	\sum_{s=1}^{i}\theta_{k,s}^{D_n^{(1)}}
	\right)\alpha_i
	+
	\frac12
	\left(
	\sum_{s=1}^{n-1}\theta_{k,s}^{D_n^{(1)}}
	-
	\theta_{k,n}^{D_n^{(1)}}
	\right)\alpha_{n-1}
	+
	\frac12
	\left(
	\sum_{s=1}^{n}\theta_{k,s}^{D_n^{(1)}}
	\right)\alpha_n.
\end{aligned}
\end{equation}

Here \(\alpha_1,\ldots,\alpha_n\) are the simple roots of the corresponding
finite classical root system; the affine simple root \(\alpha_0\) does not
appear in \(\Xi_X(p_k)\).

For the seven types considered in this paper, we have
\[
d_0^X
=
\begin{cases}
	2, & X=A_{2n}^{(2)},\\
	1, & X=A_n^{(1)},D_{n+1}^{(2)},C_n^{(1)},
	A_{2n-1}^{(2)},D_n^{(1)},B_n^{(1)}.
\end{cases}
\]

Let \(\delta_X\) denote the null root of the affine root system of type \(X\). Let 
\begin{equation}\label{eq:Xi_z_q}
\Xi_X(p_k)=\sum_{i=1}^{n} c_{k,i}^X(p_k)\alpha_i,
\qquad
z_i=e^{\alpha_i}\quad(1\le i\le n),
\qquad
\qfrak:=e^{-\delta_X/d_0^X}.
\end{equation}
Then we have the following proposition.

\begin{proposition}\label{prop:closed-ch-formula}
The character of $L(\lambda)$ is given by
\begin{equation}
	\label{eq:uniform-coordinate-character-zq}
	\ch\,L(\lambda)
	=
	e^\lambda
	\sum_{\mathbf{p}\in\mathcal P_X(\lambda)}
	\prod_{i=1}^{n}
	z_i^{\sum_{k\ge0}c_{k,i}^X(p_k)}
	\,
	\qfrak^{
		\sum_{k\ge0}(k+1)
		\mathcal H_{\lambda,k}^X(p_{k+1},p_k)
	}.
\end{equation}
\end{proposition}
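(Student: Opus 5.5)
The plan is to substitute the data of this section into the KMN formula of Theorem~\ref{theorem:wtchar}, taking $H=H_X$, which is legitimate by Theorem~\ref{thm:closed-form-energy-functions}. By \eqref{eq:charweight}, $\ch L(\lambda)=\sum_{\mathbf p\in\mathcal P_X(\lambda)}e^{\wt\,\mathbf p}$, and \eqref{eq:firsteq} expresses $\wt\,\mathbf p$ as
\[
\lambda+\sum_{k\ge0}\Xi_X(p_k)-\frac{\delta_X}{d_0^X}\sum_{k\ge0}(k+1)\,\mathcal H^X_{\lambda,k}(p_{k+1},p_k).
\]
Here I use the definitions $\Xi_X(p_k)=\wt p_k-\wt g_k$ and \eqref{eq:uniform-ground-state-corrected-energy}. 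Both infinite sums are finite, because $p_k=g_k$ for $k\gg0$. Consequently $\Xi_X(p_k)=0$ and $\mathcal H^X_{\lambda,k}(g_{k+1},g_k)=0$ for large $k$.

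Next, I exponentiate. Since $e^{\mu+\nu}=e^\mu e^\nu$, I write $\Xi_X(p_k)=\sum_i c^X_{k,i}(p_k)\alpha_i$ as in \eqref{eq:Xi_z_q}, which gives $e^{\sum_k\Xi_X(p_k)}=\prod_i z_i^{\sum_k c^X_{k,i}(p_k)}$. The affine term gives $\qfrak^{\sum_k(k+1)\mathcal H^X_{\lambda,k}(p_{k+1},p_k)}$ with $\qfrak=e^{-\delta_X/d_0^X}$. Factoring out $e^\lambda$ then yields \eqref{eq:uniform-coordinate-character-zq}.

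The only step needing care is the identification of $\Xi_X(p_k)$ with the coordinate expressions \eqref{eq:xiAn1}--\eqref{eq:XiDn1}, i.e.\ with $\sum_i c_{k,i}^X\alpha_i$. Here the weights lie in $P_{\mathrm{cl}}$, and the difference $\wt p_k-\wt g_k$ must be lifted to the classical root lattice. Since $\Xi_X$ was defined earlier with these explicit expansions, I take them as given. If justification is wanted, I would verify them by a short check. First, the classical part $\sum_i(\varphi_i-\varepsilon_i)\overline\Lambda_i$ computed from \eqref{eq:common-internal-eps-phi} and the boundary formulas equals the standard $\epsilon$-basis weight $\sum_s(x_s-\ol x_s)\epsilon_s$. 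Second, I convert $\epsilon_s$-sums into simple roots by the usual type $A$, $B$, $C$, $D$ change of basis; this accounts for the factors $\tfrac12$ in types $C$ and $D$ and the shifts by $(-1)^k(m_1-m_0)$ built into $\theta^X_{k,s}$. Finally, I note that the $\Lambda_0$-component is forced by level zero of the difference, so no $\alpha_0$ appears.

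Convergence is only formal. Each summand is a monomial, and the sum is a well-defined formal character because KMN's theorem already guarantees this. No further argument is required.
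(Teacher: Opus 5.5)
Your proposal is correct and follows essentially the same route as the paper: take $H=H_X$ in the KMN formula \eqref{eq:firsteq}--\eqref{eq:charweight}, rewrite the summands through $\Xi_X(p_k)$ and $\mathcal H^X_{\lambda,k}$, factor out $e^\lambda$, and convert the exponentials using $z_i=e^{\alpha_i}$ and $\qfrak=e^{-\delta_X/d_0^X}$. Your optional check of the coordinate expansions of $\Xi_X$ goes beyond the paper's proof, which does not need it because the coefficients $c^X_{k,i}$ are simply defined by \eqref{eq:Xi_z_q}.
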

\begin{proof}
By the definitions of \(\Xi_X(p_k)\),
\(\mathcal H_{\lambda, k}^X(p_{k+1},p_k)\) and \eqref{eq:firsteq}--\eqref{eq:charweight}, we have 
\begin{equation}
	\label{eq:uniform-coordinate-character}
	\ch\,L(\lambda)
	=
	e^\lambda
	\sum_{\mathbf{p}\in\mathcal P_X(\lambda)}
	\exp\left(
	\sum_{k\ge0}\Xi_X(p_k)
	-
	\frac{\delta_X}{d_0^X}
	\sum_{k\ge0}(k+1)
	\mathcal H_{\lambda,k}^X(p_{k+1},p_k)
	\right).
\end{equation}

Substituting
\eqref{eq:Xi_z_q} into \eqref{eq:uniform-coordinate-character} gives the
desired formula.
\end{proof}

\subsection[The character data]
{The character data
	\(\mathcal P_X(\lambda)\), \(c_{k,i}^{X}(p_k)\), and
	\(\mathcal H_{\lambda, k}^{X}(p_{k+1},p_k)\)}
	\label{subsec:character-data}
In each case below, the parameters \(m_i\) are assumed to satisfy the
corresponding level condition stated in
Lemma~\ref{lem:minimal-elements-all-types}. Using the definitions of
\(\mathcal P_X(\lambda)\) in \eqref{eq:Plambda} and
\(\mathcal H_{\lambda,k}^X\) in
\eqref{eq:uniform-ground-state-corrected-energy}, together with
\eqref{eq:Xi_z_q} and the coordinate expansions of
\(\Xi_X(p_k)\) in \eqref{eq:xiAn1}--\eqref{eq:XiDn1}, we obtain the
following explicit descriptions of
\(\mathcal P_X(\lambda)\), \(c_{k,i}^X(p_k)\), and
\(\mathcal H_{\lambda,k}^X(p_{k+1},p_k)\).

\subsubsection{Type \(A_n^{(1)}\)}\label{subsec:cha_An1}\hfill

\noindent 
$\mathcal P_{A_n^{(1)}}(\lambda)
=
\{
\mathbf{p}=(p_k)_{k\ge0}
\mid
p_k=(x_{k,i})_{i\in I},\
x_{k,i}\in\Z_{\ge0},\
\sum_{i\in I}x_{k,i}=l,\
x_{k,i}=m_{i+k}\text{ for all }i\in I\text{ and }k\gg0
\}$,

\noindent 
$c_{k,i}^{A_n^{(1)}}(p_k)
=
\sum_{s=1}^{i}
\left(
x_{k,s}-m_{s+k}
\right),
\quad 1\le i\le n$,

\noindent 
$\mathcal H_{\lambda, k}^{A_n^{(1)}}(p_{k+1},p_k)
=
\max_{0\le j\le n}
\left(
\sum_{s=1}^{j}
(x_{k,s}-x_{k+1,s})
+
x_{k,j+1}
\right)
-
m_{k+1}$.

\noindent
All subscripts on \(m_i\) and all cyclic coordinate indices are
understood modulo \(n+1\).

\subsubsection{Type \(A_{2n}^{(2)}\)}
\label{subsec:cha_A2n2}\hfill

\vskip 2mm

\noindent
$\mathcal P_{A_{2n}^{(2)}}(\lambda)
=
\{
\mathbf{p}=(p_k)_{k\ge0}
\mid
p_k=(x_{k,i}\mid \ol x_{k,i}),\
x_{k,i},\ol x_{k,i}\in\Z_{\ge0},\
S(p_k)\le l,\
x_{k,i}=m_i,\ \ol x_{k,i}=m_i
\text{ for all }1\le i\le n\text{ and }k\gg0
\}$,

\vskip 2mm

\noindent
$c_{k,i}^{A_{2n}^{(2)}}(p_k)
=
\sum_{r=1}^{i}
\left(
x_{k,r}-\ol x_{k,r}
\right)
\ (1\le i\le n-1),\quad
c_{k,n}^{A_{2n}^{(2)}}(p_k)
=
\frac12
\sum_{r=1}^{n}
\left(
x_{k,r}-\ol x_{k,r}
\right)$,

\vskip 2mm

\noindent
$\mathcal H_{\lambda, k}^{A_{2n}^{(2)}}(p_{k+1},p_k)
=
\max_{1\le r\le n}
\{
\sum_{i=1}^{n}(x_{k,i}-x_{k+1,i})
+
\sum_{j=1}^{r-1}(\ol x_{k+1,j}-\ol x_{k,j})
+
\sum_{j=r+1}^{n}(\ol x_{k,j}-\ol x_{k+1,j})
+
\ol x_{k,r}-x_{k+1,r}
+
|\ol x_{k+1,r}-x_{k+1,r}|,\allowbreak\
\sum_{i=1}^{n}(\ol x_{k+1,i}-\ol x_{k,i})
+
\sum_{j=1}^{r-1}(x_{k,j}-x_{k+1,j})
+
\sum_{j=r+1}^{n}(x_{k+1,j}-x_{k,j})
+
x_{k+1,r}-\ol x_{k,r}
+
|x_{k,r}-\ol x_{k,r}|
\}$.

\subsubsection{Type \(D_{n+1}^{(2)}\)}
\label{subsec:cha_Dnplus12}

Let \(r\in\{0,1\}\) be determined by
\(r\equiv m_n\pmod 2\), and set \(m=(m_n-r)/2\).

\vskip 2mm

\noindent
$\mathcal P_{D_{n+1}^{(2)}}(\lambda)
=
\{
\mathbf{p}=(p_k)_{k\ge0}
\mid
p_k=(x_{k,i}\mid x_{k,0}\mid \ol x_{k,i}),\allowbreak\
x_{k,i},\ol x_{k,i}\in\Z_{\ge0},\allowbreak\
x_{k,0}\in\{0,1\},\allowbreak\
x_{k,0}+S(p_k)\le l,\allowbreak\
x_{k,i}=m_i,\ \ol x_{k,i}=m_i
\text{ for }1\le i\le n-1\text{ and }k\gg0,\allowbreak\
x_{k,n}=m,\ \ol x_{k,n}=m,\allowbreak\
x_{k,0}=r\text{ for }k\gg0
\}$,

\vskip 2mm

\noindent
$c_{k,i}^{D_{n+1}^{(2)}}(p_k)
=
\sum_{s=1}^{i}
\left(
x_{k,s}-\ol x_{k,s}
\right),
\quad
1\le i\le n$,

\vskip 2mm

\noindent
$\mathcal H_{\lambda, k}^{D_{n+1}^{(2)}}(p_{k+1},p_k)
=
\max_{1\le s\le n}
\{
x_{k,0}-x_{k+1,0}
+
\sum_{i=1}^{n}(x_{k,i}-x_{k+1,i})
+
\sum_{j=1}^{s-1}(\ol x_{k+1,j}-\ol x_{k,j})
+
\sum_{j=s+1}^{n}(\ol x_{k,j}-\ol x_{k+1,j})
+
\ol x_{k,s}-x_{k+1,s}
+
|\ol x_{k+1,s}-x_{k+1,s}|,\allowbreak\
x_{k+1,0}-x_{k,0}
+
\sum_{i=1}^{n}(\ol x_{k+1,i}-\ol x_{k,i})
+
\sum_{j=1}^{s-1}(x_{k,j}-x_{k+1,j})
+
\sum_{j=s+1}^{n}(x_{k+1,j}-x_{k,j})
+
x_{k+1,s}-\ol x_{k,s}
+
|x_{k,s}-\ol x_{k,s}|
\}$.

\subsubsection{Type \(C_n^{(1)}\)}
\label{subsec:cha_Cn1}\hfill

\vskip 2mm

\noindent
$\mathcal P_{C_n^{(1)}}(\lambda)
=
\{
\mathbf{p}=(p_k)_{k\ge0}
\mid
p_k=(x_{k,i}\mid \ol x_{k,i}),\
x_{k,i},\ol x_{k,i}\in\Z_{\ge0},\
S(p_k)\le 2l,\allowbreak\
S(p_k)\in2\Z,\
x_{k,i}=m_i,\
\ol x_{k,i}=m_i
\text{ for all }1\le i\le n\text{ and }k\gg0
\}$,

\vskip 2mm

\noindent
$c_{k,i}^{C_n^{(1)}}(p_k)
=
\sum_{r=1}^{i}
\left(
x_{k,r}-\ol x_{k,r}
\right)
\ (1\le i\le n-1),\quad
c_{k,n}^{C_n^{(1)}}(p_k)
=
\frac12
\sum_{r=1}^{n}
\left(
x_{k,r}-\ol x_{k,r}
\right)$,

\vskip 2mm

\noindent
$\mathcal H_{\lambda, k}^{C_n^{(1)}}(p_{k+1},p_k)
=
\frac12
\max_{1\le s\le n}
\{
\sum_{i=1}^{n}(x_{k,i}-x_{k+1,i})
+
\sum_{j=1}^{s-1}(\ol x_{k+1,j}-\ol x_{k,j})
+
\sum_{j=s+1}^{n}(\ol x_{k,j}-\ol x_{k+1,j})
+
\ol x_{k,s}-x_{k+1,s}
+
|\ol x_{k+1,s}-x_{k+1,s}|,\allowbreak\
\sum_{i=1}^{n}(\ol x_{k+1,i}-\ol x_{k,i})
+
\sum_{j=1}^{s-1}(x_{k,j}-x_{k+1,j})
+
\sum_{j=s+1}^{n}(x_{k+1,j}-x_{k,j})
+
x_{k+1,s}-\ol x_{k,s}
+
|x_{k,s}-\ol x_{k,s}|
\}$.

\subsubsection{Type \(A_{2n-1}^{(2)}\)}
\label{subsec:cha_A2nminus12}\hfill

\vskip 2mm

\noindent
$\mathcal P_{A_{2n-1}^{(2)}}(\lambda)
=
\{
\mathbf{p}=(p_k)_{k\ge0}
\mid
p_k=(x_{k,i}\mid \ol x_{k,i}),\
x_{k,i},\ol x_{k,i}\in\Z_{\ge0},\
S(p_k)=l,\allowbreak\
x_{k,i}=\ol x_{k,i}=m_i
\text{ for }2\le i\le n\text{ and }k\gg0,\allowbreak\
(x_{k,1},\ol x_{k,1})
=
\begin{cases}
	(m_1,m_0), & k\equiv0\pmod2,\\
	(m_0,m_1), & k\equiv1\pmod2
\end{cases}
\text{ for }k\gg0
\}$,

\vskip 2mm

\noindent
$c_{k,i}^{A_{2n-1}^{(2)}}(p_k)
=
\sum_{r=1}^{i}
\left(
x_{k,r}-\ol x_{k,r}
\right)
-
(-1)^k(m_1-m_0),
\quad
1\le i\le n-1$,

\vskip 2mm

\noindent
$c_{k,n}^{A_{2n-1}^{(2)}}(p_k)
=
\frac12
\left(
\sum_{r=1}^{n}
\left(
x_{k,r}-\ol x_{k,r}
\right)
-
(-1)^k(m_1-m_0)
\right)$,

\vskip 2mm

\noindent
$\mathcal H_{\lambda, k}^{A_{2n-1}^{(2)}}(p_{k+1},p_k)
=
\max (
\{
\ol x_{k+1,1}-x_{k+1,1},\
x_{k,1}-\ol x_{k,1}
\}
\cup
\{
\sum_{j=1}^{s-1}
(\ol x_{k+1,j}-\ol x_{k,j})
+
(\ol x_{k+1,s}-x_{k+1,s})_+,\allowbreak\
\sum_{j=1}^{s-1}
(x_{k,j}-x_{k+1,j})
+
(x_{k,s}-\ol x_{k,s})_+
\mid
2\le s\le n
\}
)
-
(-1)^k(m_1-m_0)$.

\subsubsection{Type \(D_n^{(1)}\)}
\label{subsec:cha_Dn1}

Retain the notation \(a,u,v\) from part \textup{(6)} of
Lemma~\ref{lem:minimal-elements-all-types}.

\vskip 2mm

\noindent
$\mathcal P_{D_n^{(1)}}(\lambda)
=
\{
\mathbf{p}=(p_k)_{k\ge0}
\mid
p_k=(x_{k,i}\mid \ol x_{k,i}),\
x_{k,i},\ol x_{k,i}\in\Z_{\ge0},\
S(p_k)=l,\allowbreak\
x_{k,n}=0\text{ or }\ol x_{k,n}=0,\allowbreak\
p_k=
\begin{cases}
	(m_1,m_2,\ldots,m_{n-2},a,u
	\mid v,a,m_{n-2},\ldots,m_2,m_0),
	& k\equiv0\pmod2,\\[1mm]
	(m_0,m_2,\ldots,m_{n-2},a,v
	\mid u,a,m_{n-2},\ldots,m_2,m_1),
	& k\equiv1\pmod2
\end{cases}
\text{ for }k\gg0
\}$,

\vskip 2mm

\noindent
$c_{k,i}^{D_n^{(1)}}(p_k)
=
\sum_{r=1}^{i}
\left(
x_{k,r}-\ol x_{k,r}
\right)
-
(-1)^k(m_1-m_0),
\qquad
1\le i\le n-2$,

\vskip 2mm

\noindent
$c_{k,n-1}^{D_n^{(1)}}(p_k)
=
\frac12
\left(
\sum_{r=1}^{n-1}
\left(
x_{k,r}-\ol x_{k,r}
\right)
-
(x_{k,n}-\ol x_{k,n})
-
(-1)^k(m_1-m_0)
+
(-1)^k(m_n-m_{n-1})
\right)$,

\vskip 2mm

\noindent
$c_{k,n}^{D_n^{(1)}}(p_k)
=
\frac12
\left(
\sum_{r=1}^{n}
\left(
x_{k,r}-\ol x_{k,r}
\right)
-
(-1)^k
\bigl((m_1-m_0)+(m_n-m_{n-1})\bigr)
\right)$,

\vskip 2mm

\noindent
$\mathcal H_{\lambda, k}^{D_n^{(1)}}(p_{k+1},p_k)
=
\max(
\{
\ol x_{k+1,1}-x_{k+1,1},\allowbreak\
x_{k,1}-\ol x_{k,1},\allowbreak\
\sum_{j=1}^{n-1}
(\ol x_{k+1,j}-\ol x_{k,j})
+
x_{k+1,n}-\ol x_{k,n},\allowbreak\
\sum_{j=1}^{n-1}
(x_{k,j}-x_{k+1,j})
+
\ol x_{k,n}-x_{k+1,n}
\}
\cup
\{
\sum_{j=1}^{s-1}
(\ol x_{k+1,j}-\ol x_{k,j})
+
(\ol x_{k+1,s}-x_{k+1,s})_+,\allowbreak\
\sum_{j=1}^{s-1}
(x_{k,j}-x_{k+1,j})
+
(x_{k,s}-\ol x_{k,s})_+
\mid
2\le s\le n-1
\}
)
-
(-1)^k(m_1-m_0)$.

\subsubsection{Type \(B_n^{(1)}\)}
\label{subsec:cha_Bn1}

Let \(r\in\{0,1\}\) be determined by
\(r\equiv m_n\pmod2\), and set \(m=(m_n-r)/2\).

\vskip 2mm

\noindent
$\mathcal P_{B_n^{(1)}}(\lambda)
=
\{
\mathbf{p}=(p_k)_{k\ge0}
\mid
p_k=(x_{k,i}\mid x_{k,0}\mid \ol x_{k,i}),\
x_{k,i},\ol x_{k,i}\in\Z_{\ge0},\
x_{k,0}\in\{0,1\},\allowbreak\
x_{k,0}+S(p_k)=l,\allowbreak\
p_k=
\begin{cases}
	(m_1,m_2,\ldots,m_{n-1},m\mid r
	\mid m,m_{n-1},\ldots,m_2,m_0),
	& k\equiv0\pmod2,\\[1mm]
	(m_0,m_2,\ldots,m_{n-1},m\mid r
	\mid m,m_{n-1},\ldots,m_2,m_1),
	& k\equiv1\pmod2
\end{cases}
\text{ for }k\gg0
\}$,

\vskip 2mm

\noindent
$c_{k,i}^{B_n^{(1)}}(p_k)
=
\sum_{s=1}^{i}
\left(
x_{k,s}-\ol x_{k,s}
\right)
-
(-1)^k(m_1-m_0),
\qquad
1\le i\le n$,

\vskip 2mm

\noindent
$\mathcal H_{\lambda, k}^{B_n^{(1)}}(p_{k+1},p_k)
=
\max(
\{
\ol x_{k+1,1}-x_{k+1,1},\allowbreak\
x_{k,1}-\ol x_{k,1}
\}
\cup
\{
\sum_{j=1}^{s-1}
(\ol x_{k+1,j}-\ol x_{k,j})
+
(\ol x_{k+1,s}-x_{k+1,s})_+,\allowbreak\
\sum_{j=1}^{s-1}
(x_{k,j}-x_{k+1,j})
+
(x_{k,s}-\ol x_{k,s})_+
\mid
2\le s\le n
\}
)
-
(-1)^k(m_1-m_0)$.

\vskip 2mm

Substituting the above expressions for
\(\mathcal P_X(\lambda)\), \(c_{k,i}^{X}(p_k)\), and
\(\mathcal H_{\lambda,k}^{X}(p_{k+1},p_k)\) into
\eqref{eq:uniform-coordinate-character-zq} yields a closed coordinate
formula for \(\ch L(\lambda)\) in each of the seven types.

\section{The specialized Rogers--Ramanujan-type identities}
\label{sec:specialized-rogers-ramanujan-type-identities}
In this section, we use the character formula in Proposition \ref{prop:closed-ch-formula} to obtain the specialized Rogers--Ramanujan-type identities.
We recall Kac's principal specialization formula.  

For
\(\mathbf{s}=(s_0,s_1,\ldots,s_n)\in\Z_{>0}^{n+1}\), define
\[
F_{\mathbf s}:
\Z\bigl[\bigl[
e^{-\frac12\alpha_0},e^{-\frac12\alpha_1},\ldots,e^{-\frac12\alpha_n}
\bigr]\bigr]
\longrightarrow \Z[[t^{1/2}]],
\qquad
e^{-\frac12\alpha_i}\longmapsto t^{s_i/2}
\quad(0\le i\le n).
\]

We use the same notation for the induced map on Laurent extensions,
with values in \(\Z((t^{1/2}))\), whenever only finitely many
negative powers occur. In particular,
\[
F_{\mathbf s}(e^{\frac12\alpha_i})=t^{-s_i/2},
\qquad
F_{\mathbf s}(e^{\alpha_i})=t^{-s_i}.
\]
We denote \(F_1:=F_{(1,\ldots,1)}\).

For an affine root system \(\Phi\), let \(\mathfrak g(\Phi)\) be the
corresponding affine Kac--Moody algebra, and set $\operatorname{mult}_{\Phi}(\alpha)
:=
\dim\mathfrak g(\Phi)_\alpha$ $(\alpha\in\Phi)$.
Define
\[
D(\Phi)
:=
\prod_{\alpha\in\Phi^+}
(1-e^{-\alpha})^{\operatorname{mult}_{\Phi}(\alpha)},
\]
where \(\Phi^+\) is the set of positive roots of \(\Phi\).

When \(F_{\mathbf s}\) is applied to 
\(D(\Phi^\vee)\) associated with the dual affine root system
\(\Phi^\vee\), we use the simple roots
\(\alpha_0^\vee,\alpha_1^\vee,\ldots,\alpha_n^\vee\) of
\(\Phi^\vee\) and set $F_{\mathbf s}(e^{-\alpha_i^\vee})=t^{s_i}$ for $0\le i\le n$.

\begin{theorem}[{\cite[Proposition~10.9]{Kac90}}]\label{thm:Kac-principal-specialization}
	For the integrable highest-weight module \(L(\lambda)\) over the affine Lie
	algebra \(\mathfrak g\), one has
	\[
	F_1\!\left(e^{-\lambda}\ch L(\lambda)\right)
	=
	\frac{
		F_{(\lambda(h_0)+1,\ldots,\lambda(h_n)+1)}D(\Phi^\vee)
	}{
		F_1D(\Phi^\vee)
	}.
	\]
\end{theorem}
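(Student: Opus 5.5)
This statement is Kac's principal specialization formula, quoted from \cite[Proposition~10.9]{Kac90}. The paper only cites it, so my plan is to recover it from the Weyl--Kac character formula and the denominator identity for the dual root system.

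First I would write the Weyl--Kac formula in the form
\[
e^{-\lambda}\ch L(\lambda)=\frac{\sum_{w\in W}\epsilon(w)e^{w(\lambda+\rho)-(\lambda+\rho)}}{\sum_{w\in W}\epsilon(w)e^{w\rho-\rho}}.
\]
Here the denominator is the Weyl--Kac denominator identity, equal to \(D(\Phi)=\prod_{\alpha\in\Phi^+}(1-e^{-\alpha})^{\operatorname{mult}(\alpha)}\). Both sides are expansions in \(e^{-\alpha_i}\), so \(F_1\) may be applied termwise.

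The key step is to reinterpret \(F_1\) on the exponents \(w\mu-\mu\) with \(\mu=\lambda+\rho\). Writing \(\mu-w\mu=\sum_i k_i\alpha_i\), we get \(F_1(e^{w\mu-\mu})=t^{\sum_i k_i}=t^{(\mu-w\mu,\rho^\vee)}\), where \(\rho^\vee\) satisfies \(\langle\rho^\vee,\alpha_i\rangle=1\). Using \(W\)-invariance of the form, \((\mu-w\mu,\rho^\vee)=(\mu,\rho^\vee-w^{-1}\rho^\vee)\). Now swap the roles of \(\mu\) and \(\rho^\vee\): regard \(\rho^\vee\) as the \(\rho\) of the dual root system \(\Phi^\vee\) (with \(W\) viewed as the dual Weyl group), and regard \(\mu\) as a coweight. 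Then the sum \(\sum_w\epsilon(w)t^{(\mu,\rho^\vee-w^{-1}\rho^\vee)}\) is exactly the specialization \(F_{\mathbf s}\) of the dual denominator \(\sum_w\epsilon(w)e^{w\rho^\vee-\rho^\vee}\), with \(s_i=\langle\mu,h_i\rangle=\lambda(h_i)+1\). By the denominator identity for \(\Phi^\vee\), this equals \(F_{(\lambda(h_0)+1,\ldots,\lambda(h_n)+1)}D(\Phi^\vee)\).

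Taking \(\lambda=0\) gives the denominator \(F_1D(\Phi^\vee)\), and the quotient yields the formula. The main obstacle is bookkeeping in the twisted cases. One has to check that the dual affine root system, its multiplicities, and the normalization \(F_{\mathbf s}(e^{-\alpha_i^\vee})=t^{s_i}\) match the identification of \(\alpha_i\) with \(h_i\) up to the factors \(r_i\). This requires using the pairing \(\langle h_i,\cdot\rangle\) rather than the bilinear form, so that no \(r_i\) scalings appear. The other point to settle is convergence of the infinite sums after specialization, which holds because there are only finitely many \(w\) with a given value of \((\mu,\rho^\vee-w^{-1}\rho^\vee)\).
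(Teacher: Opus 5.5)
Your argument is correct. It is exactly Kac's own proof of Proposition~10.9: the Weyl--Kac formula, the rewriting \(\langle\rho^\vee,\mu-w\mu\rangle=\langle\rho^\vee-w^{-1}\rho^\vee,\mu\rangle\), and the denominator identity for \(\mathfrak g(A^t)\) specialized at \(s_i=\langle h_i,\lambda+\rho\rangle\), with the case \(\lambda=0\) giving \(F_1D(\Phi)=F_1D(\Phi^\vee)\). The paper itself supplies no proof beyond citing that proposition, and the bookkeeping you flag disappears once \(\Phi^\vee\) is identified with the root system of \(\mathfrak g(A^t)\), whose simple roots are the \(h_i\) and whose simple coroots are the \(\alpha_i\).
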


Let \(X\) be one of the seven affine types considered in this paper, and let
\(\Phi_X\) denote its affine root system. Write $\delta_X=\sum_{i=0}^{n} d_i^X\alpha_i$.
Set $h_X:=\sum_{i=0}^{n} d_i^X$ and $\kappa_X:=\frac{h_X}{d_0^X}$.

Thus, under the principal specialization \(F_1\), one has
\begin{equation}\label{eq:F1eqz}
F_1(e^{-\delta_X})=t^{h_X},
\qquad
F_1(\mathfrak q)
=
F_1(e^{-\delta_X/d_0^X})
=
t^{\kappa_X},
\qquad
F_1(z_i)=t^{-1}
\quad(1\le i\le n).
\end{equation}

Let $\lambda=\sum_{i=0}^{n}m_i\Lambda_i\in (P_{\mathrm{cl}}^+)_l$. For
\(\mathbf{p}=(p_k)_{k\ge0}\in\mathcal P_X(\lambda)\), define
\[
\mathscr D_X(\mathbf{p})
:=
\kappa_X
\sum_{k\ge0}(k+1)\mathcal H_{\lambda, k}^X(p_{k+1},p_k)
-
\sum_{i=1}^{n}\sum_{k\ge0}c_{k,i}^X(p_k).
\]
The sums are finite because \(p_k=g_k\) for all sufficiently large \(k\).

\begin{proposition}\label{prop:nonnegative-path-degree}
	For every
	\(X\in\{A_n^{(1)},A_{2n}^{(2)},D_{n+1}^{(2)},C_n^{(1)},
	A_{2n-1}^{(2)},D_n^{(1)},B_n^{(1)}\}\)
	and every path
	\(\mathbf p\in\mathcal P_X(\lambda)\), one has $\mathscr D_X(\mathbf p)\in\mathbb Z_{\ge0}$.
\end{proposition}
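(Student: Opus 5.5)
The plan is to identify \(\mathscr D_X(\mathbf p)\) with the exponent of \(t\) in the principal specialization of a single monomial of the KMN character, that is, with the principal degree of \(\lambda-\wt\,\mathbf p\). Integrality and nonnegativity then both follow from the fact that \(\lambda-\wt\,\mathbf p\in Q_+\).

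\textbf{Step 1 (rewrite the exponent).} By Theorem~\ref{theorem:wtchar} and the definitions of \(\Xi_X\) and \(\mathcal H^X_{\lambda,k}\), we have
\[
\wt\,\mathbf p-\lambda=\sum_{k\ge0}\sum_{i=1}^n c^X_{k,i}(p_k)\alpha_i-\frac{\delta_X}{d_0^X}\sum_{k\ge0}(k+1)\mathcal H^X_{\lambda,k}(p_{k+1},p_k).
\]
This is exactly the computation in the proof of Proposition~\ref{prop:closed-ch-formula}, and the sums are finite. Applying \(F_1\) via \eqref{eq:F1eqz} gives
\[
F_1\bigl(e^{\wt\,\mathbf p-\lambda}\bigr)=t^{\mathscr D_X(\mathbf p)}.
\]
Equivalently, writing \(\lambda-\wt\,\mathbf p=\sum_{i=0}^n a_i\alpha_i\), we get \(\mathscr D_X(\mathbf p)=\sum_i a_i\), since \(\delta_X/d_0^X=\sum_i (d_i^X/d_0^X)\alpha_i\) has principal height \(\kappa_X\).

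\textbf{Step 2 (positivity).} By Proposition~\ref{prop:path-realization}, \(\mathbf p=\Psi_\lambda(b)\) for some \(b\in B(\lambda)\). Since \(\Psi_\lambda\) is a crystal isomorphism sending \(u_\lambda\) to the ground-state path, \(b=\ft{i_1}\cdots\ft{i_N}u_\lambda\). Hence \(\wt\,\mathbf p=\lambda-\sum_j\alpha_{i_j}\) as an affine weight, provided the affine weight of \eqref{eq:firsteq} is the one transported by \(\Psi_\lambda\). This is precisely the content of the KMN theorem (Theorem~\ref{theorem:wtchar}): the affine weights of paths are those of \(B(\lambda)\), which has all weights in \(\lambda-Q_+\). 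Therefore \(a_i\in\mathbb Z_{\ge0}\), and \(\mathscr D_X(\mathbf p)=\sum a_i\in\mathbb Z_{\ge0}\). The individual terms \(c^X_{k,i}\) may be half-integers, and \(\kappa_X\) may be non-integral, so integrality cannot be read off termwise. It comes for free from \(a_i\in\mathbb Z\).

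\textbf{Main obstacle.} The delicate point is justifying that \eqref{eq:firsteq} holds with the specific normalization used here. Two things must be checked. First, the ground-state correction makes the energy term independent of the additive constant; this was shown after \eqref{eq:uniform-ground-state-corrected-energy}. Second, the sign convention of Definition~\ref{def:energy} must match that of \cite{KMN2}, so that \(\wt\,\mathbf p\le\lambda\) and not the reverse. Theorem~\ref{thm:closed-form-energy-functions} guarantees \(H_X\) is an energy function in the sense of Definition~\ref{def:energy}, which is the convention under which Theorem~\ref{theorem:wtchar} is stated, so I would cite that.

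As a sanity check, and as a fallback independent argument, I would verify directly that \(\mathscr D_X(\mathbf p)\) is unchanged or increases by \(1\) under each \(\ft i\) on the path crystal. For \(i\ne0\), \(\sum c\) drops by one, since \(\ft i\) changes a single tensor factor's weight by \(-\alpha_i\), whose classical height is \(1\). For \(i=0\) one uses the energy recursion \eqref{eq:energy} and the fact that the \(\alpha_0\)-component is accounted for by \(\delta\) minus its classical part. Starting from \(\mathscr D_X(\text{ground state})=0\) and using connectivity of \(\mathcal P(\lambda)\) to the ground state, nonnegativity follows.
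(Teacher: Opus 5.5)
Your proposal is correct and follows the paper's proof. The paper also applies $F_1$ to \eqref{eq:firsteq}, using \eqref{eq:Xi_z_q} and \eqref{eq:F1eqz}, to get $F_1(e^{\wt(\mathbf p)-\lambda})=t^{\mathscr D_X(\mathbf p)}$, and then uses the path realization to write $\lambda-\wt(\mathbf p)=\sum_{i=0}^n N_i\alpha_i$ with $N_i\in\mathbb Z_{\ge0}$, so that $\mathscr D_X(\mathbf p)=\sum_i N_i$. Your fallback argument is not needed; if carried out, it would show that every $\widetilde f_i$, including $i=0$, raises $\mathscr D_X$ by exactly $1$ and never leaves it unchanged.
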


\begin{proof}
	By the path weight formula \eqref{eq:firsteq}, the expansion
	\eqref{eq:Xi_z_q}, the specialization identities
	\eqref{eq:F1eqz}, and the definition of
	\(\mathscr D_X(\mathbf p)\), the principal specialization gives
	\begin{equation}\label{eq:F1ewtp}
		F_1\!\left(e^{\wt(\mathbf p)-\lambda}\right)
		=
		t^{\mathscr D_X(\mathbf p)}.
	\end{equation}
	
	On the other hand, under the path realization of \(B(\lambda)\) in
	Proposition~\ref{prop:path-realization}, the path \(\mathbf p\)
	corresponds to an element of the highest-weight crystal \(B(\lambda)\).
	Hence $\lambda-\wt(\mathbf p)\in Q_+$.
	Thus
	\[
	\lambda-\wt(\mathbf p)
	=
	\sum_{i=0}^{n}N_i\alpha_i,
	\quad
	N_i\in\mathbb Z_{\ge0},
	\]
	and therefore 
	\[
	\begin{aligned}
		F_1\!\left(e^{\wt(\mathbf p)-\lambda}\right)
		=
		F_1\!\left(
		e^{-\sum_{i=0}^{n}N_i\alpha_i}
		\right)=
		t^{\sum_{i=0}^{n}N_i}.
	\end{aligned}
	\]
	Comparing this identity with \eqref{eq:F1ewtp}, we obtain $\mathscr D_X(\mathbf p)
	=
	\sum_{i=0}^{n}N_i
	\in
	\mathbb Z_{\ge0}$.
\end{proof}

\begin{theorem}
	\label{thm:principal-specialized-path-product-identity}
For each of the seven affine types \(X\) and
every
\(\lambda=\sum_{i=0}^n m_i\Lambda_i\in(P_{\mathrm{cl}}^+)_l\),
the following identity holds.	
	\[
	\sum_{\mathbf{p}\in\mathcal P_X(\lambda)}
	t^{\mathscr D_X(\mathbf{p})}
	=
	\frac{
		F_{(m_0+1,\ldots,m_n+1)}D(\Phi_X^\vee)
	}{
		F_1D(\Phi_X^\vee)
	}.
	\]
\end{theorem}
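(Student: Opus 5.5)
The plan is to apply the principal specialization $F_1$ to the normalized character $e^{-\lambda}\ch L(\lambda)$ in two ways and compare the results. On the path side, Theorem~\ref{theorem:wtchar} gives
\[
e^{-\lambda}\ch L(\lambda)=\sum_{\mathbf p\in\mathcal P_X(\lambda)}e^{\wt(\mathbf p)-\lambda}.
\]
Its coordinate form is Proposition~\ref{prop:closed-ch-formula}, with the energy normalized by the ground-state corrections $\mathcal H^X_{\lambda,k}$. On the product side, Theorem~\ref{thm:Kac-principal-specialization} evaluates the same specialization as $F_{(m_0+1,\ldots,m_n+1)}D(\Phi_X^\vee)/F_1D(\Phi_X^\vee)$, since $\lambda(h_i)=m_i$. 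So the whole identity reduces to computing $F_1$ termwise on the path sum.

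For the termwise computation I will use \eqref{eq:F1ewtp} from the proof of Proposition~\ref{prop:nonnegative-path-degree}. Combining the expansion \eqref{eq:Xi_z_q} with the specializations \eqref{eq:F1eqz}, namely $F_1(z_i)=t^{-1}$ and $F_1(\qfrak)=t^{\kappa_X}$, gives $F_1(e^{\wt(\mathbf p)-\lambda})=t^{\mathscr D_X(\mathbf p)}$. One should check that $F_1(e^{-\delta_X/d_0^X})=t^{h_X/d_0^X}$ is consistent with $F_1(e^{-\alpha_i})=t$. This holds because $\delta_X=\sum_i d_i^X\alpha_i$. It is compatible with fractional exponents, since $F_1$ is defined on half-integer powers and $\mathscr D_X(\mathbf p)\in\Z_{\ge0}$ by Proposition~\ref{prop:nonnegative-path-degree}.

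The main technical point is that $F_1$ may be applied termwise to the infinite sum, that is, that the right-hand sum converges in $\Z[[t]]$. For each $N$ only finitely many paths may satisfy $\mathscr D_X(\mathbf p)=N$. To see this, use the crystal isomorphism of Proposition~\ref{prop:path-realization}. The quantity $\mathscr D_X(\mathbf p)$ equals the height $\sum_i N_i$ of $\lambda-\wt(\mathbf p)\in Q_+$, and there are only finitely many $\beta\in Q_+$ of a given height. Each weight space of $L(\lambda)$ is finite-dimensional, so by \eqref{eq:charweight} only finitely many paths have a given weight. Hence the coefficient of $t^N$ is a finite count, namely the sum of $\dim L(\lambda)_{\lambda-\beta}$ over $\beta$ of height $N$. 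This coefficientwise finiteness justifies interchanging $F_1$ with the sum, as a specialization of formal series in $e^{-\alpha_i}$ with finitely many terms per degree.

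Equating the two evaluations then yields the stated identity. A remaining bookkeeping step is to confirm that Kac's formula uses the dual root system $\Phi_X^\vee$ with the same labeling of simple roots as in the paper's convention, including the $d_0=2$ normalization in type $A_{2n}^{(2)}$. This only affects the labeling on the product side, not the argument.
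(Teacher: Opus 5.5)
Your proposal is correct and follows essentially the same route as the paper: apply $F_1$ termwise to the KMN path sum using $F_1(e^{\wt(\mathbf p)-\lambda})=t^{\mathscr D_X(\mathbf p)}$, justify that the result lies in $\Z[[t]]$ by the finite-dimensionality of each principally graded piece of $L(\lambda)$, and compare with Kac's principal specialization formula. Your height-counting argument (finitely many $\beta\in Q_+$ of a given height, each weight space finite-dimensional) just spells out the paper's one-line finiteness remark.
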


\begin{proof}
Applying the principal specialization \(F_1\) to
\eqref{eq:uniform-coordinate-character-zq}, 
together with the definition of \(\mathscr D_X(\mathbf p)\), we obtain
\[
F_1\!\left(e^{-\lambda}\ch L(\lambda)\right)
=
\sum_{\mathbf p\in\mathcal P_X(\lambda)}
t^{\mathscr D_X(\mathbf p)}.
\]
By Proposition~\ref{prop:nonnegative-path-degree}, all the exponents
on the right-hand side are nonnegative integers. Moreover, for each
\(N\ge0\), only finitely many paths satisfy
\(\mathscr D_X(\mathbf p)=N\), since the corresponding principal
graded component of \(L(\lambda)\) is finite-dimensional. Hence the
right-hand side is a well-defined element of \(\mathbb Z[[t]]\).

On the other hand, since \(\lambda(h_i)=m_i\), 
Theorem~\ref{thm:Kac-principal-specialization} gives
\[
F_1\!\left(e^{-\lambda}\ch L(\lambda)\right)
=
\frac{
	F_{(m_0+1,\ldots,m_n+1)}D(\Phi_X^\vee)
}{
	F_1D(\Phi_X^\vee)
}.
\]
The desired identity follows.
\end{proof}

\subsection{The data for the sum side}
\label{subsec:The-sum-side}

\begin{proposition}
	\label{prop:adjacent-pair-degree-form}
	For each affine type \(X\), each dominant
	level-\(l\) weight \(\lambda\), and every path
	\(\mathbf p\in\mathcal P_X(\lambda)\), one has
	\[
	\mathscr D_X(\mathbf p)
	=
	\sum_{k\ge0}(k+1)E^X_{\lambda, k}(p_k,p_{k+1}).
	\]
	The adjacent contribution \(E^X_{\lambda, k}(p_k,p_{k+1})\) is given as follows.

	\begin{equation}\label{eq:E-An1}
	\begin{aligned}
		E_{\lambda, k}^{A_n^{(1)}}(p_k,p_{k+1})
		=
		(n+1)\mathcal H_{\lambda, k}^{A_n^{(1)}}(p_{k+1},p_k)-
		\sum_{s=1}^{n}
		(n-s+1)
		\Bigl[
		(x_{k,s}-m_{s+k})-(x_{k+1,s}-m_{s+k+1})
		\Bigr],
	\end{aligned}
	\end{equation}

	\begin{equation}\label{eq:E-A2n2}
	\begin{aligned}
		E_{\lambda, k}^{A_{2n}^{(2)}}(p_k,p_{k+1})
		=
		\frac{2n+1}{2}
		\mathcal H_{\lambda, k}^{A_{2n}^{(2)}}(p_{k+1},p_k) -
		\frac12
		\sum_{r=1}^{n}
		(2n-2r+1)
		\Bigl[
		(x_{k,r}-\ol x_{k,r})
		-
		(x_{k+1,r}-\ol x_{k+1,r})
		\Bigr].
	\end{aligned}
	\end{equation}

	\begin{equation}\label{eq:E-Dnplus12}
	\begin{aligned}
		E_{\lambda, k}^{D_{n+1}^{(2)}}(p_k,p_{k+1})
		=
		(n+1)\mathcal H_{\lambda, k}^{D_{n+1}^{(2)}}(p_{k+1},p_k) -
		\sum_{s=1}^{n}
		(n-s+1)
		\Bigl[
		(x_{k,s}-\ol x_{k,s})
		-
		(x_{k+1,s}-\ol x_{k+1,s})
		\Bigr].
	\end{aligned}
	\end{equation}

	\begin{equation}\label{eq:E-Cn1}
	\begin{aligned}
		E_{\lambda, k}^{C_n^{(1)}}(p_k,p_{k+1})
		=
		2n\mathcal H_{\lambda, k}^{C_n^{(1)}}(p_{k+1},p_k)-
		\frac12
		\sum_{r=1}^{n}
		(2n-2r+1)
		\Bigl[
		(x_{k,r}-\ol x_{k,r})
		-
		(x_{k+1,r}-\ol x_{k+1,r})
		\Bigr].
	\end{aligned}
	\end{equation}

	\begin{equation}\label{eq:E-A2nminus12}
	\begin{aligned}
		E_{\lambda, k}^{A_{2n-1}^{(2)}}(p_k,p_{k+1})
		&=
		(2n-1)\mathcal H_{\lambda, k}^{A_{2n-1}^{(2)}}(p_{k+1},p_k)  -
		\frac12(2n-1)
		\Bigl[
		x_{k,1}-\ol x_{k,1}-(-1)^k(m_1-m_0)
		\\
		&
		\quad-
		x_{k+1,1}+\ol x_{k+1,1}
		+(-1)^{k+1}(m_1-m_0)
		\Bigr]
		\\
		&\quad-
		\frac12
		\sum_{r=2}^{n}
		(2n-2r+1)
		\Bigl[
		(x_{k,r}-\ol x_{k,r})
		-
		(x_{k+1,r}-\ol x_{k+1,r})
		\Bigr].
	\end{aligned}
	\end{equation}

	\begin{equation}\label{eq:E-Dn1}
	\begin{aligned}
		E_{\lambda, k}^{D_n^{(1)}}(p_k,p_{k+1})
		&=
		(2n-2)\mathcal H_{\lambda, k}^{D_n^{(1)}}(p_{k+1},p_k) -
		(n-1)
		\Bigl[
		x_{k,1}-\ol x_{k,1}-(-1)^k(m_1-m_0)
		\\
		&\quad
		-
		x_{k+1,1}+\ol x_{k+1,1}
		+(-1)^{k+1}(m_1-m_0)
		\Bigr]
		\\
		&\quad
		-
		\sum_{r=2}^{n-1}
		(n-r)
		\Bigl[
		(x_{k,r}-\ol x_{k,r})
		-
		(x_{k+1,r}-\ol x_{k+1,r})
		\Bigr].
	\end{aligned}
	\end{equation}

	\begin{equation}\label{eq:E-Bn1}
	\begin{aligned}
		E_{\lambda, k}^{B_n^{(1)}}(p_k,p_{k+1})
		&=
		2n\mathcal H_{\lambda, k}^{B_n^{(1)}}(p_{k+1},p_k)-
		n
		\Bigl[
		x_{k,1}-\ol x_{k,1}-(-1)^k(m_1-m_0)
		\\
		&
		-
		x_{k+1,1}+\ol x_{k+1,1}
		+(-1)^{k+1}(m_1-m_0)
		\Bigr]
		\\
		&\quad
		-
		\sum_{s=2}^{n}
		(n-s+1)
		\Bigl[
		(x_{k,s}-\ol x_{k,s})
		-
		(x_{k+1,s}-\ol x_{k+1,s})
		\Bigr].
	\end{aligned}
	\end{equation}
\end{proposition}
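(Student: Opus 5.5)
The identity is a finite summation by parts. Write $\mathscr D_X(\mathbf p)=\kappa_X\sum_{k\ge0}(k+1)\mathcal H^X_{\lambda,k}(p_{k+1},p_k)-\sum_{k\ge0}\sigma_k$, where
\[
\sigma_k:=\sum_{i=1}^n c^X_{k,i}(p_k).
\]
Every sum is finite: $p_k=g_k$ for $k\gg0$, so $\sigma_k=0$ and $\mathcal H^X_{\lambda,k}=0$ there, the latter by \eqref{eq:ground-state-local-energy-values}. The first sum already has the required form $\sum_k(k+1)(\cdot)$. For the second, the telescoping identity
\[
\sum_{k\ge0}\sigma_k=\sum_{k\ge0}(k+1)(\sigma_k-\sigma_{k+1})
\]
holds, since $\sigma_k$ vanishes for large $k$. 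Both sums then carry the weight $(k+1)$, and
\[
E^X_{\lambda,k}(p_k,p_{k+1})=\kappa_X\mathcal H^X_{\lambda,k}(p_{k+1},p_k)-(\sigma_k-\sigma_{k+1}).
\]

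It remains to compute $\kappa_X$ and $\sigma_k$ in each type. For $\kappa_X=h_X/d_0^X$ I will use the standard null-root coefficients from Kac. These give $h=n+1$ for $A_n^{(1)}$; $2n$ for $C_n^{(1)}$ and $B_n^{(1)}$; $2n-2$ for $D_n^{(1)}$; $2n-1$ for $A_{2n-1}^{(2)}$; and $n+1$ for $D_{n+1}^{(2)}$. For $A_{2n}^{(2)}$ one has $h=2n+1$ and $d_0=2$, so $\kappa=(2n+1)/2$. These match the leading coefficients in \eqref{eq:E-An1}--\eqref{eq:E-Bn1}.

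To compute $\sigma_k$, I exchange the order of summation in $\sum_{i}\sum_{s\le i}\theta_{k,s}$, which gives $\sum_s(\#\{i\ge s\})\theta_{k,s}$, with the halved terms in \eqref{eq:XiA2n2-Cn1-A2nminus12} and \eqref{eq:XiDn1} included.
\begin{itemize}
\item For types $A_n^{(1)}$, $B_n^{(1)}$ and $D_{n+1}^{(2)}$, the coefficient of $\theta_{k,s}$ is $n-s+1$.
\item For type $C$ classical form, the coefficient is $(n-s)+\tfrac12=\tfrac12(2n-2s+1)$.
\item For type $D_n^{(1)}$, the coefficient of $\theta_{k,s}$ is $(n-2-s+1)+\tfrac12+\tfrac12=n-s$ for $s\le n-1$. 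The $\theta_{k,n}$ terms cancel, since they enter as $-\tfrac12+\tfrac12$. For $s=1$ this gives $n-1$.
\end{itemize}
Substituting the explicit $\theta$'s reproduces the brackets in the statement. For $A_n^{(1)}$ and the types with constant ground state, these are $\theta_{k,s}-\theta_{k+1,s}$ directly. For $A_{2n-1}^{(2)}$, $D_n^{(1)}$ and $B_n^{(1)}$, the $s=1$ bracket carries the ground-state shifts $-(-1)^k(m_1-m_0)$ and $-(-1)^{k+1}(m_1-m_0)$ from $g_k$ and $g_{k+1}$.

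I expect the main obstacle to be bookkeeping, not conceptual. It lies in matching the $\theta$-shifts and the halved root coefficients exactly to the displayed formulas, especially in $D_n^{(1)}$. There I must check that the $(-1)^k(m_n-m_{n-1})$ shifts in $\theta_{k,n}$ drop out of the $\alpha_{n-1}$ and $\alpha_n$ contributions together with $\theta_{k,n}$ itself. I would verify this by writing $c_{k,n-1}+c_{k,n}$ explicitly from \eqref{eq:XiDn1} before comparing with \eqref{eq:E-Dn1}.
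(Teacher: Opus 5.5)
Your proposal is correct and follows essentially the same route as the paper: you expand $\sum_i c^X_{k,i}$ by exchanging the order of summation to get the coefficients $n-s+1$, $\tfrac12(2n-2s+1)$, or $n-s$ (with the $\theta_{k,n}$-terms cancelling in $D_n^{(1)}$), read off $\kappa_X$ from the null-root labels, and apply the telescoping identity $\sum_k U_k=\sum_k(k+1)(U_k-U_{k+1})$. The paper applies that identity coordinate by coordinate rather than to $\sigma_k$ as a whole, and one small citation should be changed: the vanishing of $\mathcal H^X_{\lambda,k}(p_{k+1},p_k)$ for $k\gg0$ follows directly from the definition \eqref{eq:uniform-ground-state-corrected-energy} once $p_k=g_k$ and $p_{k+1}=g_{k+1}$, not from the ground-state values \eqref{eq:ground-state-local-energy-values}.
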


\begin{proof}
For a path $\mathbf{p}=(p_k)_{k\ge0}\in \mathcal P_{X}(\lambda)$, by the descriptions of \(\mathcal P_X(\lambda)\) and \(c_{k,i}^X\) given in Subsection~\ref{subsec:character-data}, the statistic \(\mathscr D_X(\mathbf{p})\) is described explicitly as follows.

\begin{equation}\label{eq:disc-A-n-1}
	\begin{aligned}
		\mathscr D_{A_n^{(1)}}(\mathbf{p})
		=
		(n+1)
		\sum_{k\ge0}(k+1)
		\mathcal H_{\lambda, k}^{A_n^{(1)}}(p_{k+1},p_k)
		-
		\sum_{k\ge0}\sum_{s=1}^{n}
		(n-s+1)(x_{k,s}-m_{s+k}).
	\end{aligned}
\end{equation}

\begin{equation}\label{eq:disc-A-2n-2}
	\begin{aligned}
		\mathscr D_{A_{2n}^{(2)}}(\mathbf{p})=
		\frac{2n+1}{2}
		\sum_{k\ge0}(k+1)
		\mathcal H_{\lambda, k}^{A_{2n}^{(2)}}(p_{k+1},p_k)
		-
		\frac12
		\sum_{k\ge0}\sum_{r=1}^{n}
		(2n-2r+1)(x_{k,r}-\ol x_{k,r}).
	\end{aligned}
\end{equation}

\begin{equation}\label{eq:disc-D-nplus1-2}
	\begin{aligned}
		\mathscr D_{D_{n+1}^{(2)}}(\mathbf{p})=
		(n+1)
		\sum_{k\ge0}(k+1)
		\mathcal H_{\lambda, k}^{D_{n+1}^{(2)}}(p_{k+1},p_k)
		-
		\sum_{k\ge0}\sum_{s=1}^{n}
		(n-s+1)(x_{k,s}-\ol x_{k,s}).
	\end{aligned}
\end{equation}

\begin{equation}\label{eq:disc-C-n-1}
	\begin{aligned}
		\mathscr D_{C_n^{(1)}}(\mathbf{p})=
		2n
		\sum_{k\ge0}(k+1)
		\mathcal H_{\lambda, k}^{C_n^{(1)}}(p_{k+1},p_k)
		-
		\frac12
		\sum_{k\ge0}\sum_{r=1}^{n}
		(2n-2r+1)(x_{k,r}-\ol x_{k,r}).
	\end{aligned}
\end{equation}

\begin{equation}\label{eq:disc-A-2nminus1-2}
	\begin{aligned}
		\mathscr D_{A_{2n-1}^{(2)}}(\mathbf{p})
		&=
		(2n-1)
		\sum_{k\ge0}(k+1)
		\mathcal H_{\lambda, k}^{A_{2n-1}^{(2)}}(p_{k+1},p_k) \\
		&\quad
		-
		\frac12
		\sum_{k\ge0}
		\left[
		\sum_{r=1}^{n}
		(2n-2r+1)(x_{k,r}-\ol x_{k,r})
		-
		(2n-1)(-1)^k(m_1-m_0)
		\right].
	\end{aligned}
\end{equation}

\begin{equation}\label{eq:disc-D-n-1}
	\begin{aligned}
		\mathscr D_{D_n^{(1)}}(\mathbf{p})
		&=
		(2n-2)
		\sum_{k\ge0}(k+1)
		\mathcal H_{\lambda, k}^{D_n^{(1)}}(p_{k+1},p_k) \\
		&\quad
		-
		\sum_{k\ge0}
		\left[
		\sum_{r=1}^{n-1}(n-r)(x_{k,r}-\ol x_{k,r})
		-(n-1)(-1)^k(m_1-m_0)
		\right].
	\end{aligned}
\end{equation}

\begin{equation}\label{eq:disc-B-n-1}
	\begin{aligned}
		\mathscr D_{B_n^{(1)}}(\mathbf{p})
		&=
		2n
		\sum_{k\ge0}(k+1)
		\mathcal H_{\lambda, k}^{B_n^{(1)}}(p_{k+1},p_k) \\
		&\quad-
		\sum_{k\ge0}
		\left[
		\sum_{s=1}^{n}
		(n-s+1)(x_{k,s}-\ol x_{k,s})
		-
		n(-1)^k(m_1-m_0)
		\right].
	\end{aligned}
\end{equation}		
	
We use the telescoping identity
\begin{equation}\label{eq:telescoping-identity}
\sum_{k\ge0}U_k
=
\sum_{k\ge0}(k+1)(U_k-U_{k+1}),
\end{equation}
valid whenever \(U_k=0\) for \(k\gg0\).

For type \(A_n^{(1)}\), the ground-state tail gives $x_{k,s}-m_{s+k}=0$ $(k\gg0)$,
where the index is read modulo \(n+1\).  By \eqref{eq:telescoping-identity}, for each \(1\le s\le n\),
\[
\sum_{k\ge0}(x_{k,s}-m_{s+k})
=
\sum_{k\ge0}(k+1)
\Bigl[
(x_{k,s}-m_{s+k})-(x_{k+1,s}-m_{s+k+1})
\Bigr].
\]
Substituting these identities into \eqref{eq:disc-A-n-1} gives the displayed
formula for \(E_{\lambda, k}^{A_n^{(1)}}(p_k,p_{k+1})\).

For the types \(A_{2n}^{(2)}\), \(D_{n+1}^{(2)}\), and \(C_n^{(1)}\), the
ground-state tails satisfy $x_{k,r}-\ol x_{k,r}=0$ $(k\gg0)$.

From \eqref{eq:telescoping-identity}, for each relevant \(r\), we have
\[
\sum_{k\ge0}(x_{k,r}-\ol x_{k,r})
=
\sum_{k\ge0}(k+1)
\Bigl[
(x_{k,r}-\ol x_{k,r})
-
(x_{k+1,r}-\ol x_{k+1,r})
\Bigr].
\]
Substituting these identities into
\eqref{eq:disc-A-2n-2}, \eqref{eq:disc-D-nplus1-2}, and
\eqref{eq:disc-C-n-1} gives the displayed formulas for
\(E_{\lambda, k}^{A_{2n}^{(2)}}\), \(E_{\lambda, k}^{D_{n+1}^{(2)}}\), and \(E_{\lambda, k}^{C_n^{(1)}}\).

For the types \(A_{2n-1}^{(2)}\), \(D_n^{(1)}\), and \(B_n^{(1)}\), the
two-periodic ground-state tail gives
\[
x_{k,1}-\ol x_{k,1}-(-1)^k(m_1-m_0)=0
\qquad(k\gg0),
\]
and, for the remaining barred coordinates appearing in the correction term, we have $x_{k,r}-\ol x_{k,r}=0$ for $k\gg0$.

It follows from \eqref{eq:telescoping-identity} that
\[
\begin{aligned}
	&\sum_{k\ge0}
	\bigl(x_{k,1}-\ol x_{k,1}-(-1)^k(m_1-m_0)\bigr) \\
	=&
	\sum_{k\ge0}(k+1)
	\Bigl[
	x_{k,1}-\ol x_{k,1}-(-1)^k(m_1-m_0)
	-
	x_{k+1,1}+\ol x_{k+1,1}
	+(-1)^{k+1}(m_1-m_0)
	\Bigr],
\end{aligned}
\]
and, for the remaining relevant coordinates,
\[
\sum_{k\ge0}(x_{k,r}-\ol x_{k,r})
=
\sum_{k\ge0}(k+1)
\Bigl[
(x_{k,r}-\ol x_{k,r})
-
(x_{k+1,r}-\ol x_{k+1,r})
\Bigr].
\]
Substituting these identities into
\eqref{eq:disc-A-2nminus1-2}, \eqref{eq:disc-D-n-1}, and
\eqref{eq:disc-B-n-1} gives the displayed formulas for
\(E_{\lambda, k}^{A_{2n-1}^{(2)}}\), \(E_{\lambda, k}^{D_n^{(1)}}\) and \(E_{\lambda, k}^{B_n^{(1)}}\).  Therefore $\mathscr D_X(\mathbf p)
=
\sum_{k\ge0}(k+1)E_{\lambda, k}^X(p_k,p_{k+1})$ holds for all seven cases.
\end{proof}

\vskip 2mm

\begin{proposition}\label{prop:adjacent-energy-independent-of-k}
For each of the seven affine types \(X\), each
	\(\lambda\in(P_{\mathrm{cl}}^+)_l\), every path
	\(\mathbf p=(p_k)_{k\ge0}\in\mathcal P_X(\lambda)\), and every
	\(k\ge0\), the value
	\(E_{\lambda,k}^X(p_k,p_{k+1})\) depends only on the ordered pair
	\((p_k,p_{k+1})\).
\end{proposition}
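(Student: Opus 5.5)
The plan is to expand each adjacent contribution \(E^X_{\lambda,k}(p_k,p_{k+1})\) of Proposition~\ref{prop:adjacent-pair-degree-form} into a part built only from the coordinates of \(p_k,p_{k+1}\) and a residual part in which \(k\) appears explicitly. The aim is to show that this residual part is independent of \(k\). By \eqref{eq:uniform-ground-state-corrected-energy}, \(\mathcal H^X_{\lambda,k}(p_{k+1},p_k)=H_X(p_{k+1}\otimes p_k)-H_X(g_{k+1}\otimes g_k)\). The first term depends only on the pair, by the closed formula \eqref{eq:uniform-energy-seven-types}. The second term is the explicit constant recorded in \eqref{eq:ground-state-local-energy-values}. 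So \(k\) enters only through that ground-state value and through the ground-state shifts (\(m_{s+k}\) or \((-1)^k(m_1-m_0)\)) inside the correction brackets of \eqref{eq:E-An1}--\eqref{eq:E-Bn1}. I would treat the three groups of types separately.

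\emph{Types \(A_{2n}^{(2)},D_{n+1}^{(2)},C_n^{(1)}\).} Here \(H_X(g_{k+1}\otimes g_k)=0\), and the brackets in \eqref{eq:E-A2n2}, \eqref{eq:E-Dnplus12}, \eqref{eq:E-Cn1} involve only \(x_{k,r}-\ol x_{k,r}\) and \(x_{k+1,r}-\ol x_{k+1,r}\). Hence \(E^X_{\lambda,k}\) is visibly a function of \((p_k,p_{k+1})\) alone, and nothing needs to be done.

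\emph{Types \(A_{2n-1}^{(2)},D_n^{(1)},B_n^{(1)}\).} Let \(C_X\) be the coefficient of \(\mathcal H\), namely \(2n-1\), \(2n-2\), \(2n\) respectively. The ground-state term contributes \(-C_X(-1)^k(m_1-m_0)\). In the first bracket the explicit shift is \(-(-1)^k(m_1-m_0)+(-1)^{k+1}(m_1-m_0)=-2(-1)^k(m_1-m_0)\). That bracket carries the coefficient \(-\tfrac12(2n-1)\), \(-(n-1)\), \(-n\) respectively, which in each case is exactly \(-C_X/2\), so it contributes \(+C_X(-1)^k(m_1-m_0)\). The two contributions cancel. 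The remaining brackets (\(r\ge2\)) contain no \(k\), so \(E^X_{\lambda,k}\) depends only on the pair. The only thing to check carefully is this coefficient match \(C_X=2\cdot\)(bracket coefficient) in each of the three types.

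\emph{Type \(A_n^{(1)}\).} This is the one genuine computation and the step I expect to need the most care, since the shifts \(m_{s+k}\) rotate with \(k\). The explicit \(k\)-dependent part of \eqref{eq:E-An1} is
\[
-(n+1)m_{k+1}+\sum_{s=1}^{n}(n-s+1)\bigl(m_{s+k}-m_{s+k+1}\bigr).
\]
Summation by parts, reindexing the second sum by \(s\mapsto s-1\), turns the sum into \(n\,m_{k+1}-\sum_{s=2}^{n+1}m_{s+k}\). Since indices are read modulo \(n+1\), the set \(\{m_{k+1},\dots,m_{k+n+1}\}\) is all of \(\{m_i\}\), and \(\sum_i m_i=l\). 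So the sum equals \((n+1)m_{k+1}-l\), and the whole expression collapses to \(-l\), a constant independent of \(k\). Hence \(E^{A_n^{(1)}}_{\lambda,k}(p_k,p_{k+1})=(n+1)H_X(p_{k+1}\otimes p_k)-\sum_{s=1}^n(n-s+1)(x_{k,s}-x_{k+1,s})-l\), which depends only on the ordered pair. This completes all seven cases.
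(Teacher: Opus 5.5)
Your proposal is correct and follows the paper's proof essentially step for step. You substitute the ground-state values \eqref{eq:ground-state-local-energy-values} into $\mathcal H^X_{\lambda,k}$, note that types $A_{2n}^{(2)},D_{n+1}^{(2)},C_n^{(1)}$ are immediate, cancel the $\pm C_X(-1)^k(m_1-m_0)$ terms in types $A_{2n-1}^{(2)},D_n^{(1)},B_n^{(1)}$, and use the identity $\sum_{s=1}^n(n-s+1)(m_{k+s}-m_{k+s+1})=(n+1)m_{k+1}-l$ in type $A_n^{(1)}$ to reduce the $k$-dependent part to the constant $-l$, exactly as in \eqref{eq:E-An1-pair-only}.
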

\begin{proof}
	Substituting \eqref{eq:ground-state-local-energy-values} into
	\eqref{eq:uniform-ground-state-corrected-energy} gives
	\[
	\mathcal H_{\lambda,k}^X(p_{k+1},p_k)
	=
	\begin{cases}
		H_X(p_{k+1}\otimes p_k),
		&
		X=A_{2n}^{(2)},D_{n+1}^{(2)},C_n^{(1)},\\[1mm]
		H_X(p_{k+1}\otimes p_k)-m_{k+1},
		&
		X=A_n^{(1)},\\[1mm]
		H_X(p_{k+1}\otimes p_k)-(-1)^k(m_1-m_0),
		&
		X=A_{2n-1}^{(2)},D_n^{(1)},B_n^{(1)}.
	\end{cases}
	\]
	
	For
	\(X=A_{2n}^{(2)},D_{n+1}^{(2)},C_n^{(1)}\), substituting the first
	case into \eqref{eq:E-A2n2}, \eqref{eq:E-Dnplus12}, and
	\eqref{eq:E-Cn1}, respectively, shows immediately that the resulting
	expressions depend only on the coordinates of \(p_k,p_{k+1}\) and on
	\(H_X(p_{k+1}\otimes p_k)\). Hence the assertion holds for these three
	types.
	
	It remains to consider the other four types. For \(X=A_n^{(1)}\),
	all indices of the \(m_i\) are read modulo \(n+1\), we have
	\begin{equation}\label{eq:nplus1mkplus1}
	\begin{aligned}
		\sum_{s=1}^{n}(n-s+1)(m_{k+s}-m_{k+s+1})
		=
		n m_{k+1}-\sum_{s=2}^{n+1}m_{k+s}
		=
		(n+1)m_{k+1}-l.
	\end{aligned}
	\end{equation}
	Substituting the second case above into \eqref{eq:E-An1} and using
	the identity \eqref{eq:nplus1mkplus1}, we obtain
	\begin{equation}\label{eq:E-An1-pair-only}
		\begin{aligned}
			E_{\lambda,k}^{A_n^{(1)}}(p_k,p_{k+1})
			&=
			(n+1)
			\bigl(
			H_{A_n^{(1)}}(p_{k+1}\otimes p_k)-m_{k+1}
			\bigr)\\
			&\quad-
			\sum_{s=1}^{n}(n-s+1)
			\Bigl[
			(x_{k,s}-x_{k+1,s})
			-
			(m_{k+s}-m_{k+s+1})
			\Bigr]\\
			&=
			(n+1)H_{A_n^{(1)}}(p_{k+1}\otimes p_k)
			-
			\sum_{s=1}^{n}(n-s+1)(x_{k,s}-x_{k+1,s})-l.
		\end{aligned}
	\end{equation}
	
	For \(X=A_{2n-1}^{(2)}\), substituting the third case above into
	\eqref{eq:E-A2nminus12} and using
	\((-1)^{k+1}=-(-1)^k\), we obtain
	\begin{equation}\label{eq:E-A2nminus12-pair-only}
		\begin{aligned}
			E_{\lambda,k}^{A_{2n-1}^{(2)}}(p_k,p_{k+1})
			&=
			(2n-1)
			\Bigl(
			H_{A_{2n-1}^{(2)}}(p_{k+1}\otimes p_k)
			-
			(-1)^k(m_1-m_0)
			\Bigr)\\
			&\quad-
			\frac{2n-1}{2}
			\Bigl[
			x_{k,1}-\ol x_{k,1}
			-
			x_{k+1,1}+\ol x_{k+1,1}
			-
			2(-1)^k(m_1-m_0)
			\Bigr]\\
			&\quad-
			\frac12
			\sum_{r=2}^{n}(2n-2r+1)
			\Bigl[
			(x_{k,r}-\ol x_{k,r})
			-
			(x_{k+1,r}-\ol x_{k+1,r})
			\Bigr]\\
			&=
			(2n-1)H_{A_{2n-1}^{(2)}}(p_{k+1}\otimes p_k)\\
			&\quad-
			\frac12
			\sum_{r=1}^{n}(2n-2r+1)
			\Bigl[
			(x_{k,r}-\ol x_{k,r})
			-
			(x_{k+1,r}-\ol x_{k+1,r})
			\Bigr].
		\end{aligned}
	\end{equation}
	
	For \(X=D_n^{(1)}\), substituting the third case above into
	\eqref{eq:E-Dn1} gives
	\begin{equation}\label{eq:E-Dn1-pair-only}
		\begin{aligned}
			E_{\lambda,k}^{D_n^{(1)}}(p_k,p_{k+1})
			&=
			(2n-2)
			\Bigl(
			H_{D_n^{(1)}}(p_{k+1}\otimes p_k)
			-
			(-1)^k(m_1-m_0)
			\Bigr)\\
			&\quad-
			(n-1)
			\Bigl[
			x_{k,1}-\ol x_{k,1}
			-
			x_{k+1,1}+\ol x_{k+1,1}
			-
			2(-1)^k(m_1-m_0)
			\Bigr]\\
			&\quad-
			\sum_{r=2}^{n-1}(n-r)
			\Bigl[
			(x_{k,r}-\ol x_{k,r})
			-
			(x_{k+1,r}-\ol x_{k+1,r})
			\Bigr]\\
			&=
			(2n-2)H_{D_n^{(1)}}(p_{k+1}\otimes p_k)\\
			&\quad-
			\sum_{r=1}^{n-1}(n-r)
			\Bigl[
			(x_{k,r}-\ol x_{k,r})
			-
			(x_{k+1,r}-\ol x_{k+1,r})
			\Bigr].
		\end{aligned}
	\end{equation}
	
	Finally, for \(X=B_n^{(1)}\), substituting the third case above into
	\eqref{eq:E-Bn1} gives
	\begin{equation}\label{eq:E-Bn1-pair-only}
		\begin{aligned}
			E_{\lambda,k}^{B_n^{(1)}}(p_k,p_{k+1})
			&=
			2n
			\Bigl(
			H_{B_n^{(1)}}(p_{k+1}\otimes p_k)
			-
			(-1)^k(m_1-m_0)
			\Bigr)\\
			&\quad-
			n
			\Bigl[
			x_{k,1}-\ol x_{k,1}
			-
			x_{k+1,1}+\ol x_{k+1,1}
			-
			2(-1)^k(m_1-m_0)
			\Bigr]\\
			&\quad-
			\sum_{r=2}^{n}(n-r+1)
			\Bigl[
			(x_{k,r}-\ol x_{k,r})
			-
			(x_{k+1,r}-\ol x_{k+1,r})
			\Bigr]\\
			&=
			2nH_{B_n^{(1)}}(p_{k+1}\otimes p_k)
			-
			\sum_{r=1}^{n}(n-r+1)
			\Bigl[
			(x_{k,r}-\ol x_{k,r})
			-
			(x_{k+1,r}-\ol x_{k+1,r})
			\Bigr].
		\end{aligned}
	\end{equation}
	
	The right-hand sides of
	\eqref{eq:E-An1-pair-only},
	\eqref{eq:E-A2nminus12-pair-only},
	\eqref{eq:E-Dn1-pair-only}, and
	\eqref{eq:E-Bn1-pair-only} depend only on
	\(H_X(p_{k+1}\otimes p_k)\), the coordinates of \(p_k,p_{k+1}\),
	and, in type \(A_n^{(1)}\), the fixed level \(l\). Thus they depend
	on \(k\) only through the ordered pair \((p_k,p_{k+1})\). Together with the first three cases, this proves the proposition.
\end{proof}

By Proposition~\ref{prop:adjacent-energy-independent-of-k},
\(E_{\lambda,k}^X(p_k,p_{k+1})\) is determined solely by the ordered
pair \((p_k,p_{k+1})\). We may therefore suppress the index \(k\) and
define the function \(E_\lambda^X\) on \(B\times B\) by
\begin{equation}\label{eq:ElambdaX}
	E_\lambda^X(b,b')
	:=
	E_{\lambda,k}^X(b,b'),
	\qquad b,b'\in B,
\end{equation}
where the right-hand side is independent of \(k\).

By Proposition~\ref{prop:adjacent-pair-degree-form}, the degree \(\mathscr D_X(\mathbf p)\) is determined by
the adjacent-pair matrix
\begin{equation*}
\bigl(E_\lambda^X(b,b')\bigr)_{b,b'\in B},
\end{equation*}
whose entries are obtained explicitly from the formulas \eqref{eq:E-An1}--\eqref{eq:E-Bn1} in
Proposition~\ref{prop:adjacent-pair-degree-form}.

\subsection{The data for the product sides}
\label{subsec:uniform-root-data-product-sides}

In the product side of
Theorem~\ref{thm:principal-specialized-path-product-identity}, the affine
root system which occurs is the dual affine root system \(\Phi_X^\vee\).
We record here the notation and root data used in the product evaluations
below.

Throughout this subsection, let $s_i=m_i+1$ for $0\leq i\leq n$.
Set  $M_{a,b}:=\sum_{j=a}^{b}s_j$ for $1\le a\le b\le n$.

For each type \(X\), the integer \(L\) is defined by
\[
F_{\mathbf s}(e^{-\delta_X^\vee})=F_{(m_0+1,\ldots,m_n+1)}(e^{-\delta_X^\vee})=t^L,
\]
where \(\delta_X^\vee\) is the null root of \(\Phi_X^\vee\).

For each affine type \(X\), we write $\Phi_X^{\vee,+,\mathrm{im}}$ and $\Phi_X^{\vee,+,\mathrm{re}}$
for the sets of positive imaginary roots and positive real roots of the dual
affine root system \(\Phi_X^\vee\), respectively.

We compute the product side by separating
imaginary and real roots:
\[
\begin{aligned}
	F_{\mathbf{s}}D(\Phi_X^\vee)
	=
	\prod_{\alpha\in\Phi_X^{\vee,+,\mathrm{im}}}
	\left(1-F_{\mathbf{s}}(e^{-\alpha})\right)^{\operatorname{mult}\alpha}\times
	\prod_{\alpha\in\Phi_X^{\vee,+,\mathrm{re}}}
	\left(1-F_{\mathbf{s}}(e^{-\alpha})\right),
\end{aligned}
\]
\[
\begin{aligned}
	F_1D(\Phi_X^\vee)
	&=
	\prod_{\alpha\in\Phi_X^{\vee,+,\mathrm{im}}}
	\left(1-F_1(e^{-\alpha})\right)^{\operatorname{mult}\alpha}\times
	\prod_{\alpha\in\Phi_X^{\vee,+,\mathrm{re}}}
	\left(1-F_1(e^{-\alpha})\right).
\end{aligned}
\]

We use the standard \(q\)-Pochhammer notation $(a;u)_\infty:=\prod_{j=0}^{\infty}(1-au^j)$,
and, for several parameters, we set $(a_1,\ldots,a_r;u)_\infty
:=
(a_1;u)_\infty\cdots(a_r;u)_\infty$.

Using the root data in Appendix~\ref{app:root-data}, we have the following results.

\vskip 2mm

\paragraph{\bf Type \(A_n^{(1)}\).}
\begin{equation}\label{eq:product-side-An1}
	\frac{
		F_{\mathbf{s}}D(\Phi_{A_n^{(1)}}^\vee)
	}{
		F_1D(\Phi_{A_n^{(1)}}^\vee)
	}
	=
	\frac{(t^L;t^L)_\infty^n}
	{(t^{n+1};t^{n+1})_\infty^n}
	\prod_{1\le a\le b\le n}
	\frac{
		(t^{M_{a,b}},t^{L-M_{a,b}};t^L)_\infty
	}{
		(t^{b-a+1},t^{n-b+a};t^{n+1})_\infty
	}.
\end{equation}

\vskip 2mm

\paragraph{\bf Type \(A_{2n}^{(2)}\).}
\begin{equation}
	\label{eq:product-side-A2n2}
	\begin{aligned}
		\frac{
			F_{\mathbf{s}}D(\Phi_{A_{2n}^{(2)}}^\vee)
		}{
			F_1D(\Phi_{A_{2n}^{(2)}}^\vee)
		}
		&=
		\frac{(t^L;t^L)_\infty^n}
		{(t^{2n+1};t^{2n+1})_\infty^n}
		\prod_{a=1}^{n}
		\frac{
			(t^{M_{a,n}},t^{L-M_{a,n}};t^L)_\infty
		}{
			(t^{n-a+1},t^{n+a};t^{2n+1})_\infty
		}
		\prod_{1\le a<b\le n}
		\frac{
			(t^{M_{a,b-1}},t^{L-M_{a,b-1}};t^L)_\infty
		}{
			(t^{b-a},t^{2n+1-b+a};t^{2n+1})_\infty
		}
		\\
		&\quad\times
		\prod_{1\le a<b\le n}
		\frac{
			(t^{M_{a,b-1}+2M_{b,n}},
			t^{L-M_{a,b-1}-2M_{b,n}};t^L)_\infty
		}{
			(t^{2n-a-b+2},t^{a+b-1};t^{2n+1})_\infty
		}
		\prod_{a=1}^{n}
		\frac{
			(t^{L+2M_{a,n}},t^{L-2M_{a,n}};t^{2L})_\infty
		}{
			(t^{4n-2a+3},t^{2a-1};t^{4n+2})_\infty
		}.
	\end{aligned}
\end{equation}

\vskip 2mm

\paragraph{\bf Type \(D_{n+1}^{(2)}\).}
\begin{equation}
	\label{eq:product-side-Dn12}
	\begin{aligned}
		\frac{
			F_{\mathbf{s}}D(\Phi_{D_{n+1}^{(2)}}^\vee)
		}{
			F_1D(\Phi_{D_{n+1}^{(2)}}^\vee)
		}
		&=
		\frac{(t^L;t^L)_\infty^n}
		{(t^{2n};t^{2n})_\infty^n}
		\prod_{1\le a<b\le n}
		\frac{
			(t^{M_{a,b-1}},t^{L-M_{a,b-1}};t^L)_\infty
		}{
			(t^{b-a},t^{2n-b+a};t^{2n})_\infty
		}
		\\
		&\quad\times
		\prod_{1\le a<b\le n}
		\frac{
			(t^{M_{a,b-1}+2M_{b,n}-(m_n+1)},
			t^{L-M_{a,b-1}-2M_{b,n}+(m_n+1)};t^L)_\infty
		}{
			(t^{2n-a-b+1},t^{a+b-1};t^{2n})_\infty
		}
		\\
		&\quad\times
		\prod_{a=1}^{n}
		\frac{
			(t^{2M_{a,n}-(m_n+1)},
			t^{L-2M_{a,n}+(m_n+1)};t^L)_\infty
		}{
			(t^{2n-2a+1},t^{2a-1};t^{2n})_\infty
		}.
	\end{aligned}
\end{equation}

\vskip 2mm

\paragraph{\bf Type \(C_n^{(1)}\).}
\begin{equation}
	\label{eq:product-side-Cn1}
	\begin{aligned}
		\frac{
			F_{\mathbf{s}}D(\Phi_{C_n^{(1)}}^\vee)
		}{
			F_1D(\Phi_{C_n^{(1)}}^\vee)
		}
		&=
		\frac{
			(t^{2L};t^{2L})_\infty^n
			(t^L;t^{2L})_\infty
		}{
			(t^{2n+2};t^{2n+2})_\infty^n
			(t^{n+1};t^{2n+2})_\infty
		}
		\prod_{a=1}^{n}
		\frac{
			(t^{M_{a,n}},t^{L-M_{a,n}};t^L)_\infty
		}{
			(t^{n-a+1},t^a;t^{n+1})_\infty
		}
		\\
		&\quad\times
		\prod_{1\le a<b\le n}
		\frac{
			(t^{M_{a,b-1}},t^{2L-M_{a,b-1}};t^{2L})_\infty
		}{
			(t^{b-a},t^{2n+2-b+a};t^{2n+2})_\infty
		}
		\\
		&\quad\times
		\prod_{1\le a<b\le n}
		\frac{
			(t^{M_{a,b-1}+2M_{b,n}},
			t^{2L-M_{a,b-1}-2M_{b,n}};t^{2L})_\infty
		}{
			(t^{2n-a-b+2},t^{a+b};t^{2n+2})_\infty
		}.
	\end{aligned}
\end{equation}

\vskip 2mm

\paragraph{\bf Type \(A_{2n-1}^{(2)}\).}
\begin{equation}
	\label{eq:product-side-A2n12}
	\begin{aligned}
		\frac{
			F_{\mathbf{s}}D(\Phi_{A_{2n-1}^{(2)}}^\vee)
		}{
			F_1D(\Phi_{A_{2n-1}^{(2)}}^\vee)
		}
		&=
		\frac{(t^L;t^L)_\infty^n}
		{(t^{2n};t^{2n})_\infty^n}
		\prod_{a=1}^{n}
		\frac{
			(t^{M_{a,n}},t^{L-M_{a,n}};t^L)_\infty
		}{
			(t^{n-a+1},t^{n+a-1};t^{2n})_\infty
		}
		\prod_{1\le a<b\le n}
		\frac{
			(t^{M_{a,b-1}},t^{L-M_{a,b-1}};t^L)_\infty
		}{
			(t^{b-a},t^{2n-b+a};t^{2n})_\infty
		}
		\\
		&\quad\times
		\prod_{1\le a<b\le n}
		\frac{
			(t^{M_{a,b-1}+2M_{b,n}},
			t^{L-M_{a,b-1}-2M_{b,n}};t^L)_\infty
		}{
			(t^{2n-a-b+2},t^{a+b-2};t^{2n})_\infty
		}.
	\end{aligned}
\end{equation}

\vskip 2mm

\paragraph{\bf Type \(D_n^{(1)}\).}
\begin{equation}
	\label{eq:product-side-Dn1}
	\begin{aligned}
		\frac{
			F_{\mathbf{s}}D(\Phi_{D_n^{(1)}}^\vee)
		}{
			F_1D(\Phi_{D_n^{(1)}}^\vee)
		}
		&=
		\frac{(t^L;t^L)_\infty^n}
		{(t^{2n-2};t^{2n-2})_\infty^n}
		\prod_{1\le a<b\le n}
		\frac{
			(t^{M_{a,b-1}},t^{L-M_{a,b-1}};t^L)_\infty
		}{
			(t^{b-a},t^{2n-2-b+a};t^{2n-2})_\infty
		}
		\\
		&\quad\times
		\prod_{a=1}^{n-1}
		\frac{
			(t^{M_{a,n}-(m_{n-1}+1)},
			t^{L-M_{a,n}+(m_{n-1}+1)};t^L)_\infty
		}{
			(t^{n-a},t^{n+a-2};t^{2n-2})_\infty
		}
		\\
		&\quad\times
		\prod_{1\le a<b\le n-1}
		\frac{
			(t^{M_{a,b-1}+2M_{b,n}-m_{n-1}-m_n-2},
			t^{L-M_{a,b-1}-2M_{b,n}+m_{n-1}+m_n+2};t^L)_\infty
		}{
			(t^{2n-a-b},t^{a+b-2};t^{2n-2})_\infty
		}.
	\end{aligned}
\end{equation}

\vskip 2mm

\paragraph{\bf Type \(B_n^{(1)}\).}
\begin{equation}
	\label{eq:product-side-Bn1}
	\begin{aligned}
		\frac{
			F_{\mathbf{s}}D(\Phi_{B_n^{(1)}}^\vee)
		}{
			F_1D(\Phi_{B_n^{(1)}}^\vee)
		}
		&=
		\frac{
			(t^{2L};t^{2L})_\infty^n
			(t^L;t^{2L})_\infty^{n-1}
		}{
			(t^{4n-2};t^{4n-2})_\infty^n
			(t^{2n-1};t^{4n-2})_\infty^{n-1}
		}
		\prod_{1\le a<b\le n}
		\frac{
			(t^{M_{a,b-1}},t^{L-M_{a,b-1}};t^L)_\infty
		}{
			(t^{b-a},t^{2n-1-b+a};t^{2n-1})_\infty
		}
		\\
		&\quad\times
		\prod_{1\le a<b\le n}
		\frac{
			(t^{M_{a,b-1}+2M_{b,n}-(m_n+1)},
			t^{L-M_{a,b-1}-2M_{b,n}+(m_n+1)};t^L)_\infty
		}{
			(t^{2n-a-b+1},t^{a+b-2};t^{2n-1})_\infty
		}
		\\
		&\quad\times
		\prod_{a=1}^{n}
		\frac{
			(t^{2M_{a,n}-(m_n+1)},
			t^{2L-2M_{a,n}+(m_n+1)};t^{2L})_\infty
		}{
			(t^{2n-2a+1},t^{2n+2a-3};t^{4n-2})_\infty
		}.
	\end{aligned}
\end{equation}

By Theorem~\ref{thm:principal-specialized-path-product-identity}
and Propositions~\ref{prop:adjacent-pair-degree-form}
and~\ref{prop:adjacent-energy-independent-of-k}, we obtain the
following corollary.

\begin{corollary}
	\label{cor:level-l-specialization}
	Let \(X\) be one of the seven affine types, and let
	\(B\) be the corresponding level-\(l\) perfect crystal. For $\lambda=\sum_{i=0}^{n}m_i\Lambda_i
	\in(P_{\mathrm{cl}}^+)_l$ and $\mathbf s=(m_0+1,\ldots,m_n+1)$,
	let \(\mathbf g_\lambda=(g_k^\lambda)_{k\ge0}\) be the corresponding
	ground-state path. Then
	\begin{equation}\label{eq:level-l-specialization}
		\sum_{\substack{(p_k)_{k\ge0}\\
				p_k\in B\\
				p_k=g_k^\lambda\text{ for }k\gg0}}
		t^{\sum_{k\ge0}(k+1)E_\lambda^X(p_k,p_{k+1})}
		=
		\frac{F_{\mathbf s}D(\Phi_X^\vee)}
		{F_1D(\Phi_X^\vee)}.
	\end{equation}
	Here \(E_\lambda^X\) is the \(k\)-independent adjacent contribution
	defined by \eqref{eq:ElambdaX}. The right-hand
	side is given in
	\eqref{eq:product-side-An1}--\eqref{eq:product-side-Bn1}.
\end{corollary}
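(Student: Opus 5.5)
The corollary follows by chaining three results already established. I would start from Theorem~\ref{thm:principal-specialized-path-product-identity}. For the given \(X\) and \(\lambda\) it states that \(\sum_{\mathbf p\in\mathcal P_X(\lambda)}t^{\mathscr D_X(\mathbf p)}\) equals \(F_{\mathbf s}D(\Phi_X^\vee)/F_1D(\Phi_X^\vee)\) with \(\mathbf s=(m_0+1,\ldots,m_n+1)\). Only the left-hand side needs to be rewritten.

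First I would identify the index sets. By \eqref{eq:Plambda}, \(\mathcal P_X(\lambda)\) consists exactly of the sequences \((p_k)_{k\ge0}\) with \(p_k\in B\) and \(p_k=b_k\) for \(k\gg0\). Here \((b_k)\) is the ground-state path \(\mathbf g_\lambda=(g_k^\lambda)\) produced by \eqref{eq:inductive_path}. So the summation range in \eqref{eq:level-l-specialization} is \(\mathcal P_X(\lambda)\) written out.

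Next I would replace the exponent termwise. Proposition~\ref{prop:adjacent-pair-degree-form} gives \(\mathscr D_X(\mathbf p)=\sum_{k\ge0}(k+1)E^X_{\lambda,k}(p_k,p_{k+1})\). Proposition~\ref{prop:adjacent-energy-independent-of-k} lets me replace each \(E^X_{\lambda,k}(p_k,p_{k+1})\) by \(E^X_\lambda(p_k,p_{k+1})\) as defined in \eqref{eq:ElambdaX}. The sum over \(k\) is finite: for large \(k\) both \(p_k\) and \(p_{k+1}\) agree with the ground state. In that regime \(\mathcal H^X_{\lambda,k}(g_{k+1},g_k)=0\), and the coordinate differences in \eqref{eq:E-An1}--\eqref{eq:E-Bn1} vanish by the telescoping normalizations, so the summands are zero.

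Finally, the right-hand side is read off from the evaluations \eqref{eq:product-side-An1}--\eqref{eq:product-side-Bn1}. These come from splitting \(D(\Phi_X^\vee)\) into real and imaginary root factors using the root data in the appendix.

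The only delicate point is the \(k\)-independence. For \(A_n^{(1)}\) and the two-periodic types, the formulas defining \(E_{\lambda,k}\) contain \(k\)-dependent terms \(m_{k+1}\) and \((-1)^k(m_1-m_0)\). Proposition~\ref{prop:adjacent-energy-independent-of-k} shows that these cancel, so \(E^X_\lambda\) is well defined on \(B\times B\). The convergence of the left-hand side as a power series in \(t\) is already contained in the proof of the theorem.
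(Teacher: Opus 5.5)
Your proposal is correct and follows the paper's argument. The paper also obtains the corollary directly from Theorem~\ref{thm:principal-specialized-path-product-identity}. It rewrites the exponent \(\mathscr D_X(\mathbf p)\) via Proposition~\ref{prop:adjacent-pair-degree-form} and replaces \(E^X_{\lambda,k}\) by the \(k\)-independent \(E^X_\lambda\) via Proposition~\ref{prop:adjacent-energy-independent-of-k}. Your remarks identifying the summation range with \(\mathcal P_X(\lambda)\) and checking that the exponent sum is finite make explicit what the paper leaves implicit.
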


\subsection{Level-two examples}
\label{subsec:level-two-examples}

In the low-rank examples below, we specialize
Corollary~\ref{cor:level-l-specialization} to level \(l=2\). For a
level-\(2\) weight \(\lambda\), let \(Z_\lambda^X(t)\) denote the
left-hand side of \eqref{eq:level-l-specialization}. The data determining
its sum side: the level-\(2\) perfect crystal
\(B=\{b_0,\ldots,b_N\}\), the ground-state path
\(\mathbf g_\lambda=(g_k^\lambda)_{k\ge0}\), and the matrix
\(\bigl(E_\lambda^X(b_i,b_j)\bigr)_{b_i,b_j\in B}\) are recorded,
respectively, in
Appendices~\ref{app:level-two-perfect-crystal-elements},
\ref{app:level-two-ground-state-paths}, and
\ref{app:level-two-degree-matrices}. The corresponding product side is
obtained by substituting the value of \(L\) from
Table~\ref{tab:dual-affine-root-systems-and-L} and the coefficients
\(m_i\) from Appendix~\ref{app:level-two-ground-state-paths} into the
appropriate formula in
\eqref{eq:product-side-An1}--\eqref{eq:product-side-Bn1}. We list only
the resulting identities with distinct product sides, grouping together
the weights that yield the same product.

	\medskip
	\noindent\textup{(1) Type \(A_1^{(1)}\).}
	\[
	Z_{\Lambda_0+\Lambda_1}^{A_1^{(1)}}(t)
	=
	\frac{(t^2;t^4)_\infty}{(t;t^2)_\infty^2},\quad Z_{2\Lambda_0}^{A_1^{(1)}}(t)
	=
	Z_{2\Lambda_1}^{A_1^{(1)}}(t)
	=
	\frac{1}{(t^2;t^4)_\infty(t;t^2)_\infty}.
	\]

	\medskip
	\noindent\textup{(2) Type \(A_2^{(2)}\).}
	\[
	Z_{2\Lambda_0}^{A_2^{(2)}}(t)
	=
	\frac{1}{
		(t^2,t^8;t^{10})_\infty
		(t,t^5;t^6)_\infty
	},\quad 
	Z_{\Lambda_1}^{A_2^{(2)}}(t)
	=
	\frac{1}{
		(t^4,t^6;t^{10})_\infty
		(t,t^5;t^6)_\infty
	}.
	\]
	
	\medskip
	\noindent\textup{(3) Type \(D_3^{(2)}\).}
	\[
	Z_{2\Lambda_0}^{D_3^{(2)}}(t)
	=
	Z_{\Lambda_1}^{D_3^{(2)}}(t)
	=
	Z_{2\Lambda_2}^{D_3^{(2)}}(t)
	=
	\frac{1}{(t,t^2,t^3,t^4,t^5;t^6)_\infty},\quad 
	Z_{\Lambda_0+\Lambda_2}^{D_3^{(2)}}(t)
	=
	\frac{1}{(t,t^5;t^6)_\infty^2(t^3;t^6)_\infty}.
	\]
	
	\medskip
	\noindent\textup{(4) Type \(C_2^{(1)}\).}
	\[
	Z_{2\Lambda_0}^{C_2^{(1)}}(t)
	=
	Z_{2\Lambda_2}^{C_2^{(1)}}(t)
	=
	\frac{1}{
		(t,t^2,t^3,t^4,t^6,t^7,t^8,t^9;t^{10})_\infty
		(t^5;t^{10})_\infty^2
	},
	\]
	\[
	Z_{\Lambda_0+\Lambda_1}^{C_2^{(1)}}(t)
	=
	Z_{\Lambda_1+\Lambda_2}^{C_2^{(1)}}(t)
	=
	\frac{1}{
		(t,t^3,t^5,t^7,t^9;t^{10})_\infty^2
	},
	\]
	\[
	Z_{\Lambda_0+\Lambda_2}^{C_2^{(1)}}(t)
	=
	\frac{1}{
		(t,t^4,t^6,t^9;t^{10})_\infty^2
		(t^3,t^7;t^{10})_\infty
	},\quad
	Z_{2\Lambda_1}^{C_2^{(1)}}(t)
	=
	\frac{1}{
		(t^2,t^3,t^7,t^8;t^{10})_\infty^2
		(t,t^9;t^{10})_\infty
	}.
	\]
	
	\medskip
	\noindent\textup{(5) Type \(A_5^{(2)}\).}
	\[
	Z_{2\Lambda_0}^{A_5^{(2)}}(t)
	=
	Z_{2\Lambda_1}^{A_5^{(2)}}(t)
	=
	\frac{1}{
		(t,t^2,t^3,t^4,t^5,t^6,t^7,
		t^9,t^{10},t^{11},t^{12},t^{13},t^{14},t^{15},
		t^{17},t^{18},t^{19},t^{20},t^{21},t^{22},t^{23};t^{24})_\infty
	},
	\]
	\[
	Z_{\Lambda_0+\Lambda_1}^{A_5^{(2)}}(t)
	=
	\frac{1}{
		(t,t^7,t^9,t^{15},t^{17},t^{23};t^{24})_\infty^2
		(t^3,t^4,t^5,t^{11},t^{12},t^{13},
		t^{19},t^{20},t^{21};t^{24})_\infty
	},
	\]
	\[
	Z_{\Lambda_2}^{A_5^{(2)}}(t)
	=
	\frac{
		(t^4,t^{12},t^{20};t^{24})_\infty
	}{
		(t^2,t^6,t^{10},t^{14},t^{18},t^{22};t^{24})_\infty^2
		(t,t^3,t^5,t^7,t^9,t^{11},t^{13},t^{15},
		t^{17},t^{19},t^{21},t^{23};t^{24})_\infty
	},
	\]
	\[
	Z_{\Lambda_3}^{A_5^{(2)}}(t)
	=
	\frac{1}{
		(t^3,t^5,t^{11},t^{13},t^{19},t^{21};t^{24})_\infty^2
		(t,t^4,t^7,t^9,t^{12},t^{15},t^{17},t^{20},t^{23};t^{24})_\infty
	}.
	\]
	
	\medskip
	\noindent\textup{(6) Type \(D_4^{(1)}\).}
	\[
	\begin{aligned}
		Z_{2\Lambda_0}^{D_4^{(1)}}(t)
		&=
		Z_{2\Lambda_1}^{D_4^{(1)}}(t)
		=
		Z_{2\Lambda_3}^{D_4^{(1)}}(t)
		=
		Z_{2\Lambda_4}^{D_4^{(1)}}(t)\\
		&=
		\frac{1}{
			(t,t^2,t^5,t^6,t^7,t^{10},t^{11},
			t^{13},t^{14},t^{17},t^{18},t^{19},t^{22},t^{23};t^{24})_\infty
			(t^3,t^4,t^9,t^{12},t^{15},t^{20},t^{21};t^{24})_\infty^2
		}.
	\end{aligned}
	\]
	\[
	\begin{aligned}
		Z_{\Lambda_0+\Lambda_1}^{D_4^{(1)}}(t)
		&=
		Z_{\Lambda_0+\Lambda_3}^{D_4^{(1)}}(t)
		=
		Z_{\Lambda_0+\Lambda_4}^{D_4^{(1)}}(t)
		=
		Z_{\Lambda_1+\Lambda_3}^{D_4^{(1)}}(t)=
		Z_{\Lambda_1+\Lambda_4}^{D_4^{(1)}}(t)
		=
		Z_{\Lambda_3+\Lambda_4}^{D_4^{(1)}}(t)\\
		&=
		\frac{1}{
			(t,t^5,t^7,t^{11},t^{13},t^{17},t^{19},t^{23};t^{24})_\infty^2
			(t^3,t^9,t^{15},t^{21};t^{24})_\infty^3
		}.
	\end{aligned}
	\]
	\[
	Z_{\Lambda_2}^{D_4^{(1)}}(t)
	=
	\frac{
		(t^4,t^{12},t^{20};t^{24})_\infty^2
	}{
		(t,t^5,t^7,t^{11},t^{13},t^{17},t^{19},t^{23};t^{24})_\infty
		(t^3,t^9,t^{15},t^{21};t^{24})_\infty^2
		(t^2,t^6,t^{10},t^{14},t^{18},t^{22};t^{24})_\infty^3
	}.
	\]
	
	\medskip
	\noindent\textup{(7) Type \(B_3^{(1)}\).}
	\[
	Z_{2\Lambda_0}^{B_3^{(1)}}(t)
	=
	Z_{2\Lambda_1}^{B_3^{(1)}}(t)
	=
	\frac{1}{
		(t,t^2,t^3,t^4,t^5,t^6,t^8,t^9,t^{10},t^{11},t^{12},t^{13};t^{14})_\infty
		(t^7;t^{14})_\infty^2
	},
	\]
	\[
	Z_{\Lambda_0+\Lambda_1}^{B_3^{(1)}}(t)
	=
	\frac{1}{
		(t,t^6,t^8,t^{13};t^{14})_\infty^2
		(t^3,t^4,t^5,t^9,t^{10},t^{11};t^{14})_\infty
	},
	\]
	\[
	Z_{\Lambda_0+\Lambda_3}^{B_3^{(1)}}(t)
	=
	Z_{\Lambda_1+\Lambda_3}^{B_3^{(1)}}(t)
	=
	\frac{1}{
		(t,t^3,t^5,t^7,t^9,t^{11},t^{13};t^{14})_\infty^2
	},
	\]
	\[
	Z_{2\Lambda_3}^{B_3^{(1)}}(t)
	=
	\frac{1}{
		(t^3,t^4,t^{10},t^{11};t^{14})_\infty^2
		(t,t^2,t^5,t^9,t^{12},t^{13};t^{14})_\infty
	},
	\]
	\[
	Z_{\Lambda_2}^{B_3^{(1)}}(t)
	=
	\frac{1}{
		(t^2,t^5,t^9,t^{12};t^{14})_\infty^2
		(t,t^3,t^6,t^8,t^{11},t^{13};t^{14})_\infty
	}.
	\]

\appendix
\section{Root data}
\label{app:root-data}

\begin{table}[H]
\caption{Dual affine root systems, their finite types, null roots, and the values of \(L\).}
\label{tab:dual-affine-root-systems-and-L}	
	\centering
	\renewcommand{\arraystretch}{1.35}
	\small
	\begin{tabular}{|c|c|c|c|c|}
		\hline
		\(X\) & \(\Phi_X^\vee\) & \(\Phi_{0,X}^\vee\) & \(\delta^\vee\) & \(L\) \\
		\hline
		\(A_n^{(1)}\)
		&
		\(A_n^{(1)}\)
		&
		\(A_n\)
		&
		\(\displaystyle \sum_{i=0}^{n}\alpha_i^\vee\)
		&
		\(l+n+1\)
		\\
		\hline
		\(A_{2n}^{(2)}\)
		&
		\(A_{2n}^{(2),\mathrm{tr}}\)
		&
		\(B_n\)
		&
		\makecell{\(\displaystyle
			\alpha_0^\vee+2\sum_{i=1}^{n}\alpha_i^\vee
			\)}
		&
		\(l+2n+1\)
		\\
		\hline
		\(D_{n+1}^{(2)}\)
		&
		\(C_n^{(1)}\)
		&
		\(C_n\)
		&
		\makecell{\(\displaystyle
			\alpha_0^\vee
			+2\sum_{i=1}^{n-1}\alpha_i^\vee
			+\alpha_n^\vee
			\)}
		&
		\(l+2n\)
		\\
		\hline
		\(C_n^{(1)}\)
		&
		\(D_{n+1}^{(2)}\)
		&
		\(B_n\)
		&
		\(\displaystyle \sum_{i=0}^{n}\alpha_i^\vee\)
		&
		\(l+n+1\)
		\\
		\hline
		\(A_{2n-1}^{(2)}\)
		&
		\(B_n^{(1)}\)
		&
		\(B_n\)
		&
		\makecell{\(\displaystyle
			\alpha_0^\vee+\alpha_1^\vee
			+2\sum_{i=2}^{n}\alpha_i^\vee
			\)}
		&
		\(l+2n\)
		\\
		\hline
		\(D_n^{(1)}\)
		&
		\(D_n^{(1)}\)
		&
		\(D_n\)
		&
		\makecell{\(\displaystyle
			\alpha_0^\vee+\alpha_1^\vee
			+2\sum_{i=2}^{n-2}\alpha_i^\vee
			\)\\
			\(\displaystyle
			+\alpha_{n-1}^\vee+\alpha_n^\vee
			\)}
		&
		\(l+2n-2\)
		\\
		\hline
		\(B_n^{(1)}\)
		&
		\(A_{2n-1}^{(2)}\)
		&
		\(C_n\)
		&
		\makecell{\(\displaystyle
			\alpha_0^\vee+\alpha_1^\vee
			+2\sum_{i=2}^{n-1}\alpha_i^\vee
			\)\\
			\(\displaystyle
			+\alpha_n^\vee
			\)}
		&
		\(l+2n-1\)
		\\
		\hline
	\end{tabular}
\end{table}
Here \(A_{2n}^{(2),\mathrm{tr}}\) denotes the affine root system associated with the transpose of the generalized Cartan matrix of type \(A_{2n}^{(2)}\).

We use the standard realizations of the finite root systems. For \(B_n\),
\[
\alpha_i^\vee=\epsilon_i-\epsilon_{i+1}\quad(1\le i<n),
\qquad
\alpha_n^\vee=\epsilon_n.
\]
For \(C_n\),
\[
\alpha_i^\vee=\epsilon_i-\epsilon_{i+1}\quad(1\le i<n),
\qquad
\alpha_n^\vee=2\epsilon_n.
\]
For \(D_n\),
\[
\alpha_i^\vee=\epsilon_i-\epsilon_{i+1}\quad(1\le i<n),
\qquad
\alpha_n^\vee=\epsilon_{n-1}+\epsilon_n.
\]
Throughout, an empty sum is understood to be \(0\).
All real roots have multiplicity one; the multiplicities of imaginary roots
are stated explicitly.

\paragraph{\bf Type \(A_n^{(1)}\).}
The positive imaginary roots are
\[
\Phi_{A_n^{(1)}}^{\vee,+,\mathrm{im}}
=
\{j\delta^\vee\mid j\ge 1\},
\]
each with multiplicity \(n\). The positive real roots are
\[
\Phi_{A_n^{(1)}}^{\vee,+,\mathrm{re}}
=
\{
\beta+j\delta^\vee,\ (j+1)\delta^\vee-\beta
\mid
\beta\in\Phi_{A_n}^{0,+},\ j\ge0
\},
\]
where
\[
\Phi_{A_n}^{0,+}
=
\left\{
\alpha_a^\vee+\alpha_{a+1}^\vee+\cdots+\alpha_b^\vee
\ \middle|\
1\le a\le b\le n
\right\}.
\]

\paragraph{\bf Type \(A_{2n}^{(2)}\).}
The positive imaginary roots are
\[
\Phi_{A_{2n}^{(2)}}^{\vee,+,\mathrm{im}}
=
\{j\delta^\vee\mid j\ge1\},
\]
each with multiplicity \(n\).

For the finite root system \(B_n\), let
\[
\Phi_{B_n,s}^{0,+}
=
\{
\sum_{i=a}^{n}\alpha_i^\vee
\mid
1\le a\le n
\}.
\]

\[
\Phi_{B_n,l}^{0,+}
=
\{
\sum_{i=a}^{b-1}\alpha_i^\vee
\mid
1\le a<b\le n
\}
\cup
\{
\sum_{i=a}^{b-1}\alpha_i^\vee
+
2\sum_{i=b}^{n}\alpha_i^\vee
\mid
1\le a<b\le n
\}.
\]
Then the positive real roots are
\[
\begin{aligned}
	\Phi_{A_{2n}^{(2)}}^{\vee,+,\mathrm{re}}
	=
	&\{
	\beta+j\delta^\vee,\ (j+1)\delta^\vee-\beta
	\mid
	\beta\in
	\Phi_{B_n,s}^{0,+}\cup\Phi_{B_n,l}^{0,+},
	\ j\ge0
	\}
	\\
	&\cup
	\{
	2\beta+(2j+1)\delta^\vee,\ (2j+1)\delta^\vee-2\beta
	\mid
	\beta\in\Phi_{B_n,s}^{0,+},
	\ j\ge0
	\}.
\end{aligned}
\]

\paragraph{\bf Type \(D_{n+1}^{(2)}\).}
The positive imaginary roots are
\[
\Phi_{D_{n+1}^{(2)}}^{\vee,+,\mathrm{im}}
=
\{j\delta^\vee\mid j\ge1\},
\]
each with multiplicity \(n\). Since the dual affine root system is untwisted,
the positive real roots are
\[
\Phi_{D_{n+1}^{(2)}}^{\vee,+,\mathrm{re}}
=
\{
\beta+j\delta^\vee,\ (j+1)\delta^\vee-\beta
\mid
\beta\in\Phi_{C_n}^{0,+},\ j\ge0
\},
\]
where  $\Phi_{C_n}^{0,+}
=
\Phi_{C_n,s}^{0,+}\cup\Phi_{C_n,l}^{0,+}$ and

\[
\Phi_{C_n,s}^{0,+}
=
\{
\sum_{i=a}^{b-1}\alpha_i^\vee
\mid
1\le a<b\le n
\}
\cup
\{
\sum_{i=a}^{b-1}\alpha_i^\vee
+
2\sum_{i=b}^{n-1}\alpha_i^\vee
+
\alpha_n^\vee
\mid
1\le a<b\le n
\},
\]

\[
\Phi_{C_n,l}^{0,+}
=
\{
2\sum_{i=a}^{n-1}\alpha_i^\vee+\alpha_n^\vee
\mid
1\le a\le n
\}.
\]

\paragraph{\bf Type \(C_n^{(1)}\).}
The positive imaginary roots are
\[
\Phi_{C_n^{(1)}}^{\vee,+,\mathrm{im}}
=
\{2j\delta^\vee\mid j\ge1\}
\cup
\{(2j+1)\delta^\vee\mid j\ge0\},
\]
where the roots \(2j\delta^\vee\) have multiplicity \(n\), and the roots
\((2j+1)\delta^\vee\) have multiplicity \(1\).

Then the positive real roots are
\[
\begin{aligned}
	\Phi_{C_n^{(1)}}^{\vee,+,\mathrm{re}}
	=
	&\{
	\beta+j\delta^\vee,\ (j+1)\delta^\vee-\beta
	\mid
	\beta\in\Phi_{B_n,s}^{0,+},\ j\ge0
	\}
	\\
	&\cup
	\{
	\beta+2j\delta^\vee,\ 2(j+1)\delta^\vee-\beta
	\mid
	\beta\in\Phi_{B_n,l}^{0,+},\ j\ge0
	\}.
\end{aligned}
\]

\paragraph{\bf Type \(A_{2n-1}^{(2)}\).}
The positive imaginary roots are
\[
\Phi_{A_{2n-1}^{(2)}}^{\vee,+,\mathrm{im}}
=
\{j\delta^\vee\mid j\ge1\},
\]
each with multiplicity \(n\).

Let $\Phi_{B_n}^{0,+}
=
\Phi_{B_n,s}^{0,+}\cup\Phi_{B_n,l}^{0,+}$.
Since the dual affine root system is untwisted, the positive real roots are
\[
\Phi_{A_{2n-1}^{(2)}}^{\vee,+,\mathrm{re}}
=
\{
\beta+j\delta^\vee,\ (j+1)\delta^\vee-\beta
\mid
\beta\in\Phi_{B_n}^{0,+},\ j\ge0
\}.
\]

\paragraph{\bf Type \(D_n^{(1)}\).}
The positive imaginary roots are
\[
\Phi_{D_n^{(1)}}^{\vee,+,\mathrm{im}}
=
\{j\delta^\vee\mid j\ge1\},
\]
each with multiplicity \(n\).

For the finite root system \(D_n\), let

\[
\begin{aligned}
	\Phi_{D_n}^{0,+}
	=
	&\{
	\sum_{i=a}^{b-1}\alpha_i^\vee
	\mid
	1\le a<b\le n
	\}
	\cup
	\{
	\sum_{i=a}^{n-2}\alpha_i^\vee+\alpha_n^\vee
	\mid
	1\le a<n
	\}
	\\
	&\cup
	\{
	\sum_{i=a}^{b-1}\alpha_i^\vee
	+
	2\sum_{i=b}^{n-2}\alpha_i^\vee
	+
	\alpha_{n-1}^\vee+\alpha_n^\vee
	\mid
	1\le a<b\le n-1
	\}.
\end{aligned}
\]

Since the dual affine root system is untwisted, the positive real roots are
\[
\Phi_{D_n^{(1)}}^{\vee,+,\mathrm{re}}
=
\{
\beta+j\delta^\vee,\ (j+1)\delta^\vee-\beta
\mid
\beta\in\Phi_{D_n}^{0,+},\ j\ge0
\}.
\]

\paragraph{\bf Type \(B_n^{(1)}\).}
The positive imaginary roots are
\[
\Phi_{B_n^{(1)}}^{\vee,+,\mathrm{im}}
=
\{2j\delta^\vee\mid j\ge1\}
\cup
\{(2j+1)\delta^\vee\mid j\ge0\},
\]
where the roots \(2j\delta^\vee\) have multiplicity \(n\), and the roots
\((2j+1)\delta^\vee\) have multiplicity \(n-1\).
The positive real roots are
\[
\begin{aligned}
	\Phi_{B_n^{(1)}}^{\vee,+,\mathrm{re}}
	=
	&\{
	\beta+j\delta^\vee,\ (j+1)\delta^\vee-\beta
	\mid
	\beta\in\Phi_{C_n,s}^{0,+},\ j\ge0
	\}
	\\
	&\cup
	\{
	\beta+2j\delta^\vee,\ 2(j+1)\delta^\vee-\beta
	\mid
	\beta\in\Phi_{C_n,l}^{0,+},\ j\ge0
	\}.
\end{aligned}
\]

\section{Elements of level-two perfect crystals}
\label{app:level-two-perfect-crystal-elements}

For \(A_1^{(1)}\) at level \(l=2\), set $B=\{b_0,b_1,b_2\}$, where
$$
b_0=(2,0),\ b_1=(1,1),\ b_2=(0,2).
$$

For \(A_2^{(2)}\) at level \(l=2\), set $B=\{b_0,b_1,\cdots, b_{5}\}$, where 
$$
b_0=(0\mid0),\ b_1=(1\mid0),\ b_2=(0\mid1),\ b_3=(2\mid0),\ b_4=(1\mid1),\ b_5=(0\mid2).
$$

For \(D_3^{(2)}\) at level \(l=2\), set $B=\{b_0,b_1,\cdots, b_{19}\}$, where
\[
\begin{array}{llll}
	b_0=(0,0\mid0\mid0,0)
	& b_5=(0,0\mid0\mid0,1)
	& b_{10}=(1,0\mid0\mid0,1)
	& b_{15}=(0,0\mid1\mid1,0)\\
	b_1=(1,0\mid0\mid0,0)
	& b_6=(2,0\mid0\mid0,0)
	& b_{11}=(0,2\mid0\mid0,0)
	& b_{16}=(0,0\mid1\mid0,1)\\
	b_2=(0,1\mid0\mid0,0)
	& b_7=(1,1\mid0\mid0,0)
	& b_{12}=(0,1\mid1\mid0,0)
	& b_{17}=(0,0\mid0\mid2,0)\\
	b_3=(0,0\mid1\mid0,0)
	& b_8=(1,0\mid1\mid0,0)
	& b_{13}=(0,1\mid0\mid1,0)
	& b_{18}=(0,0\mid0\mid1,1)\\
	b_4=(0,0\mid0\mid1,0)
	& b_9=(1,0\mid0\mid1,0)
	& b_{14}=(0,1\mid0\mid0,1)
	& b_{19}=(0,0\mid0\mid0,2).
\end{array}
\]

For \(C_2^{(1)}\) at level \(l=2\), set $B=\{b_0, b_1, \cdots, b_{45}\}$, where
\[
\begin{array}{llll}
	b_{0}=(0,0\mid0,0)      & b_{12}=(3,1\mid0,0)     & b_{24}=(1,1\mid2,0)     & b_{36}=(0,2\mid0,2)\\
	b_{1}=(2,0\mid0,0)      & b_{13}=(3,0\mid1,0)     & b_{25}=(1,1\mid1,1)     & b_{37}=(0,1\mid3,0)\\
	b_{2}=(1,1\mid0,0)      & b_{14}=(3,0\mid0,1)     & b_{26}=(1,1\mid0,2)     & b_{38}=(0,1\mid2,1)\\
	b_{3}=(1,0\mid1,0)      & b_{15}=(2,2\mid0,0)     & b_{27}=(1,0\mid3,0)     & b_{39}=(0,1\mid1,2)\\
	b_{4}=(1,0\mid0,1)      & b_{16}=(2,1\mid1,0)     & b_{28}=(1,0\mid2,1)     & b_{40}=(0,1\mid0,3)\\
	b_{5}=(0,2\mid0,0)      & b_{17}=(2,1\mid0,1)     & b_{29}=(1,0\mid1,2)     & b_{41}=(0,0\mid4,0)\\
	b_{6}=(0,1\mid1,0)      & b_{18}=(2,0\mid2,0)     & b_{30}=(1,0\mid0,3)     & b_{42}=(0,0\mid3,1)\\
	b_{7}=(0,1\mid0,1)      & b_{19}=(2,0\mid1,1)     & b_{31}=(0,4\mid0,0)     & b_{43}=(0,0\mid2,2)\\
	b_{8}=(0,0\mid2,0)      & b_{20}=(2,0\mid0,2)     & b_{32}=(0,3\mid1,0)     & b_{44}=(0,0\mid1,3)\\
	b_{9}=(0,0\mid1,1)      & b_{21}=(1,3\mid0,0)     & b_{33}=(0,3\mid0,1)     & b_{45}=(0,0\mid0,4)\\
	b_{10}=(0,0\mid0,2)     & b_{22}=(1,2\mid1,0)     & b_{34}=(0,2\mid2,0)     & \\
	b_{11}=(4,0\mid0,0)     & b_{23}=(1,2\mid0,1)     & b_{35}=(0,2\mid1,1)     &
\end{array}
\]

For \(A_5^{(2)}\) at level \(l=2\), set $B=\{b_0, b_1, \cdots, b_{20}\}$, where
\[
\begin{array}{lll}
	b_0=(2,0,0\mid0,0,0)      & b_1=(1,1,0\mid0,0,0)      & b_2=(1,0,1\mid0,0,0)\\
	b_3=(1,0,0\mid1,0,0)      & b_4=(1,0,0\mid0,1,0)      & b_5=(1,0,0\mid0,0,1)\\
	b_6=(0,2,0\mid0,0,0)      & b_7=(0,1,1\mid0,0,0)      & b_8=(0,1,0\mid1,0,0)\\
	b_9=(0,1,0\mid0,1,0)      & b_{10}=(0,1,0\mid0,0,1)   & b_{11}=(0,0,2\mid0,0,0)\\
	b_{12}=(0,0,1\mid1,0,0)   & b_{13}=(0,0,1\mid0,1,0)   & b_{14}=(0,0,1\mid0,0,1)\\
	b_{15}=(0,0,0\mid2,0,0)   & b_{16}=(0,0,0\mid1,1,0)   & b_{17}=(0,0,0\mid1,0,1)\\
	b_{18}=(0,0,0\mid0,2,0)   & b_{19}=(0,0,0\mid0,1,1)   & b_{20}=(0,0,0\mid0,0,2).
\end{array}
\]

For \(D_4^{(1)}\) at level \(l=2\),
let $B=\{b_0, b_1, \cdots, b_{34}\}$, where
\[
\begin{array}{lll}
	b_{0}=(2,0,0,0\mid0,0,0,0)       & b_{1}=(1,1,0,0\mid0,0,0,0)       & b_{2}=(1,0,1,0\mid0,0,0,0)\\
	b_{3}=(1,0,0,1\mid0,0,0,0)       & b_{4}=(1,0,0,0\mid1,0,0,0)       & b_{5}=(1,0,0,0\mid0,1,0,0)\\
	b_{6}=(1,0,0,0\mid0,0,1,0)       & b_{7}=(1,0,0,0\mid0,0,0,1)       & b_{8}=(0,2,0,0\mid0,0,0,0)\\
	b_{9}=(0,1,1,0\mid0,0,0,0)       & b_{10}=(0,1,0,1\mid0,0,0,0)      & b_{11}=(0,1,0,0\mid1,0,0,0)\\
	b_{12}=(0,1,0,0\mid0,1,0,0)      & b_{13}=(0,1,0,0\mid0,0,1,0)      & b_{14}=(0,1,0,0\mid0,0,0,1)\\
	b_{15}=(0,0,2,0\mid0,0,0,0)      & b_{16}=(0,0,1,1\mid0,0,0,0)      & b_{17}=(0,0,1,0\mid1,0,0,0)\\
	b_{18}=(0,0,1,0\mid0,1,0,0)      & b_{19}=(0,0,1,0\mid0,0,1,0)      & b_{20}=(0,0,1,0\mid0,0,0,1)\\
	b_{21}=(0,0,0,2\mid0,0,0,0)      & b_{22}=(0,0,0,1\mid0,1,0,0)      & b_{23}=(0,0,0,1\mid0,0,1,0)\\
	b_{24}=(0,0,0,1\mid0,0,0,1)      & b_{25}=(0,0,0,0\mid2,0,0,0)      & b_{26}=(0,0,0,0\mid1,1,0,0)\\
	b_{27}=(0,0,0,0\mid1,0,1,0)      & b_{28}=(0,0,0,0\mid1,0,0,1)      & b_{29}=(0,0,0,0\mid0,2,0,0)\\
	b_{30}=(0,0,0,0\mid0,1,1,0)      & b_{31}=(0,0,0,0\mid0,1,0,1)      & b_{32}=(0,0,0,0\mid0,0,2,0)\\
	b_{33}=(0,0,0,0\mid0,0,1,1)      & b_{34}=(0,0,0,0\mid0,0,0,2)      &
\end{array}
\]

For \(B_3^{(1)}\) at level \(l=2\), let
\(B=\{b_0,b_1,\ldots,b_{26}\}\), where
\[
\begin{array}{lll}
	b_0=(2,0,0\mid0\mid0,0,0)
	& b_1=(1,1,0\mid0\mid0,0,0)
	& b_2=(1,0,1\mid0\mid0,0,0)\\
	b_3=(1,0,0\mid0\mid1,0,0)
	& b_4=(1,0,0\mid0\mid0,1,0)
	& b_5=(1,0,0\mid0\mid0,0,1)\\
	b_6=(0,2,0\mid0\mid0,0,0)
	& b_7=(0,1,1\mid0\mid0,0,0)
	& b_8=(0,1,0\mid0\mid1,0,0)\\
	b_9=(0,1,0\mid0\mid0,1,0)
	& b_{10}=(0,1,0\mid0\mid0,0,1)
	& b_{11}=(0,0,2\mid0\mid0,0,0)\\
	b_{12}=(0,0,1\mid0\mid1,0,0)
	& b_{13}=(0,0,1\mid0\mid0,1,0)
	& b_{14}=(0,0,1\mid0\mid0,0,1)\\
	b_{15}=(0,0,0\mid0\mid2,0,0)
	& b_{16}=(0,0,0\mid0\mid1,1,0)
	& b_{17}=(0,0,0\mid0\mid1,0,1)\\
	b_{18}=(0,0,0\mid0\mid0,2,0)
	& b_{19}=(0,0,0\mid0\mid0,1,1)
	& b_{20}=(0,0,0\mid0\mid0,0,2)\\
	b_{21}=(1,0,0\mid1\mid0,0,0)
	& b_{22}=(0,1,0\mid1\mid0,0,0)
	& b_{23}=(0,0,1\mid1\mid0,0,0)\\
	b_{24}=(0,0,0\mid1\mid1,0,0)
	& b_{25}=(0,0,0\mid1\mid0,1,0)
	& b_{26}=(0,0,0\mid1\mid0,0,1).
\end{array}
\]

\vspace{-5pt}

\section{The level-two ground-state paths}
\label{app:level-two-ground-state-paths}
In the constant cases occurring below, the ground-state path is given by
\[
g_k^\lambda=b_\lambda=b^\lambda
\qquad(k\geq0).
\]
In the two-periodic cases, since \(g_0^\lambda=b_\lambda\), the ground-state path is given by
\[
g_{2r}^\lambda=b_\lambda,
\qquad
g_{2r+1}^\lambda=b^\lambda
\qquad(r\geq0).
\]

\begin{table}[H]
	\centering
	\caption{Ground-state conditions for the level-2 perfect crystal of type \(A_1^{(1)}\).}
	\label{tab:A11-level-two-ground-state-conditions}
	\begin{tabular}{|c|c|c|}
		\hline
		\(\lambda\) & \((m_0,m_1)\) & ground-state condition \\
		\hline
		\(2\Lambda_0\) & \((2,0)\) &
		\(g_{2r}^\lambda=b_0,\ g_{2r+1}^\lambda=b_2\) \\
		\hline
		\(\Lambda_0+\Lambda_1\) & \((1,1)\) &
		\(g_k^\lambda=b_1\) \\
		\hline
		\(2\Lambda_1\) & \((0,2)\) &
		\(g_{2r}^\lambda=b_2,\ g_{2r+1}^\lambda=b_0\) \\
		\hline
	\end{tabular}
\end{table}

\begin{table}[H]
	\centering	
	\caption{Ground-state conditions for the level-2 perfect crystal of type
		\(A_2^{(2)}\).}
	\label{tab:A22-level-two-ground-state-conditions}
	\begin{tabular}{|c|c|c|c|}
		\hline
		\(\lambda\) & \((m_0,m_1)\) & \(b_\lambda=b^\lambda\) &
		ground-state condition \\
		\hline
		\(2\Lambda_0\) & \((2,0)\) &
		\((0\mid0)=b_0\) &
		\(g_k^\lambda=b_0\) \\
		\hline
		\(\Lambda_1\) & \((0,1)\) &
		\((1\mid1)=b_4\) &
		\(g_k^\lambda=b_4\) \\
		\hline
	\end{tabular}
\end{table}

\begin{table}[H]
	\centering	
	\caption{Ground-state conditions for the level-2 perfect crystal of type
		\(D_3^{(2)}\).}
	\label{tab:D32-level-two-ground-state-conditions}
	\begin{tabular}{|c|c|c|c|}
		\hline
		\(\lambda\) & \((m_0,m_1,m_2)\) & \(b_\lambda=b^\lambda\) &
		ground-state condition \\
		\hline
		\(2\Lambda_0\) & \((2,0,0)\) &
		\((0,0\mid0\mid0,0)=b_0\) &
		\(g_k^\lambda=b_0\) \\
		\hline
		\(\Lambda_1\) & \((0,1,0)\) &
		\((1,0\mid0\mid0,1)=b_{10}\) &
		\(g_k^\lambda=b_{10}\) \\
		\hline
		\(\Lambda_0+\Lambda_2\) & \((1,0,1)\) &
		\((0,0\mid1\mid0,0)=b_3\) &
		\(g_k^\lambda=b_3\) \\
		\hline
		\(2\Lambda_2\) & \((0,0,2)\) &
		\((0,1\mid0\mid1,0)=b_{13}\) &
		\(g_k^\lambda=b_{13}\) \\
		\hline
	\end{tabular}
\end{table}

\begin{table}[H]
	\centering
\caption{Ground-state conditions for the level-2 perfect crystal of type
	\(C_2^{(1)}\).}
\label{tab:C21-level-two-ground-state-conditions}	
	\begin{tabular}{|c|c|c|c|}
		\hline
		\(\lambda\) & \((m_0,m_1,m_2)\) & \(b_\lambda=b^\lambda\) &
		ground-state condition \\
		\hline
		\(2\Lambda_0\) & \((2,0,0)\) &
		\((0,0\mid0,0)=b_0\) &
		\(g_k^\lambda=b_0\) \\
		\hline
		\(\Lambda_0+\Lambda_1\) & \((1,1,0)\) &
		\((1,0\mid0,1)=b_4\) &
		\(g_k^\lambda=b_4\) \\
		\hline
		\(\Lambda_0+\Lambda_2\) & \((1,0,1)\) &
		\((0,1\mid1,0)=b_6\) &
		\(g_k^\lambda=b_6\) \\
		\hline
		\(2\Lambda_1\) & \((0,2,0)\) &
		\((2,0\mid0,2)=b_{20}\) &
		\(g_k^\lambda=b_{20}\) \\
		\hline
		\(\Lambda_1+\Lambda_2\) & \((0,1,1)\) &
		\((1,1\mid1,1)=b_{25}\) &
		\(g_k^\lambda=b_{25}\) \\
		\hline
		\(2\Lambda_2\) & \((0,0,2)\) &
		\((0,2\mid2,0)=b_{34}\) &
		\(g_k^\lambda=b_{34}\) \\
		\hline
	\end{tabular}
\end{table}

\begin{table}[H]
	\centering	
	\caption{Ground-state conditions for the level-2 perfect crystal of type
		\(A_5^{(2)}\).}
	\label{tab:A52-level-two-ground-state-conditions}
	\resizebox{\textwidth}{!}{%
		\begin{tabular}{|c|c|c|c|c|}
			\hline
			\(\lambda\) & \((m_0,m_1,m_2,m_3)\) & \(b_\lambda\) & \(b^\lambda\) &
			ground-state condition \\
			\hline
			\(2\Lambda_0\) & \((2,0,0,0)\) &
			\((0,0,0\mid0,0,2)=b_{20}\) &
			\((2,0,0\mid0,0,0)=b_0\) &
			\(g_{2r}^\lambda=b_{20},\ g_{2r+1}^\lambda=b_0\) \\
			\hline
			\(\Lambda_0+\Lambda_1\) & \((1,1,0,0)\) &
			\((1,0,0\mid0,0,1)=b_5\) &
			\((1,0,0\mid0,0,1)=b_5\) &
			\(g_k^\lambda=b_5\) \\
			\hline
			\(2\Lambda_1\) & \((0,2,0,0)\) &
			\((2,0,0\mid0,0,0)=b_0\) &
			\((0,0,0\mid0,0,2)=b_{20}\) &
			\(g_{2r}^\lambda=b_0,\ g_{2r+1}^\lambda=b_{20}\) \\
			\hline
			\(\Lambda_2\) & \((0,0,1,0)\) &
			\((0,1,0\mid0,1,0)=b_9\) &
			\((0,1,0\mid0,1,0)=b_9\) &
			\(g_k^\lambda=b_9\) \\
			\hline
			\(\Lambda_3\) & \((0,0,0,1)\) &
			\((0,0,1\mid1,0,0)=b_{12}\) &
			\((0,0,1\mid1,0,0)=b_{12}\) &
			\(g_k^\lambda=b_{12}\) \\
			\hline
		\end{tabular}%
	}
\end{table}

\begin{table}[H]
	\centering
\caption{Ground-state conditions for the level-\(2\) perfect crystal of type \(D_4^{(1)}\).}
\label{tab:D41-level-two-ground-state-conditions}	
	\resizebox{\textwidth}{!}{%
		\begin{tabular}{|c|c|c|c|c|}
			\hline
			\(\lambda\) & \((m_0,m_1,m_2,m_3,m_4)\) &
			\(b_\lambda\) & \(b^\lambda\) &
			ground-state condition \\
			\hline
			\(2\Lambda_0\) & \((2,0,0,0,0)\) &
			\((0,0,0,0\mid0,0,0,2)=b_{34}\) &
			\((2,0,0,0\mid0,0,0,0)=b_0\) &
			\(g_{2r}^\lambda=b_{34},\ g_{2r+1}^\lambda=b_0\) \\
			\hline
			\(\Lambda_0+\Lambda_1\) & \((1,1,0,0,0)\) &
			\((1,0,0,0\mid0,0,0,1)=b_7\) &
			\((1,0,0,0\mid0,0,0,1)=b_7\) &
			\(g_k^\lambda=b_7\) \\
			\hline
			\(2\Lambda_1\) & \((0,2,0,0,0)\) &
			\((2,0,0,0\mid0,0,0,0)=b_0\) &
			\((0,0,0,0\mid0,0,0,2)=b_{34}\) &
			\(g_{2r}^\lambda=b_0,\ g_{2r+1}^\lambda=b_{34}\) \\
			\hline
			\(\Lambda_0+\Lambda_3\) & \((1,0,0,1,0)\) &
			\((0,0,0,0\mid1,0,0,1)=b_{28}\) &
			\((1,0,0,1\mid0,0,0,0)=b_3\) &
			\(g_{2r}^\lambda=b_{28},\ g_{2r+1}^\lambda=b_3\) \\
			\hline
			\(\Lambda_0+\Lambda_4\) & \((1,0,0,0,1)\) &
			\((0,0,0,1\mid0,0,0,1)=b_{24}\) &
			\((1,0,0,0\mid1,0,0,0)=b_4\) &
			\(g_{2r}^\lambda=b_{24},\ g_{2r+1}^\lambda=b_4\) \\
			\hline
			\(\Lambda_1+\Lambda_3\) & \((0,1,0,1,0)\) &
			\((1,0,0,0\mid1,0,0,0)=b_4\) &
			\((0,0,0,1\mid0,0,0,1)=b_{24}\) &
			\(g_{2r}^\lambda=b_4,\ g_{2r+1}^\lambda=b_{24}\) \\
			\hline
			\(\Lambda_1+\Lambda_4\) & \((0,1,0,0,1)\) &
			\((1,0,0,1\mid0,0,0,0)=b_3\) &
			\((0,0,0,0\mid1,0,0,1)=b_{28}\) &
			\(g_{2r}^\lambda=b_3,\ g_{2r+1}^\lambda=b_{28}\) \\
			\hline
			\(2\Lambda_3\) & \((0,0,0,2,0)\) &
			\((0,0,0,0\mid2,0,0,0)=b_{25}\) &
			\((0,0,0,2\mid0,0,0,0)=b_{21}\) &
			\(g_{2r}^\lambda=b_{25},\ g_{2r+1}^\lambda=b_{21}\) \\
			\hline
			\(\Lambda_3+\Lambda_4\) & \((0,0,0,1,1)\) &
			\((0,0,1,0\mid0,1,0,0)=b_{18}\) &
			\((0,0,1,0\mid0,1,0,0)=b_{18}\) &
			\(g_k^\lambda=b_{18}\) \\
			\hline
			\(2\Lambda_4\) & \((0,0,0,0,2)\) &
			\((0,0,0,2\mid0,0,0,0)=b_{21}\) &
			\((0,0,0,0\mid2,0,0,0)=b_{25}\) &
			\(g_{2r}^\lambda=b_{21},\ g_{2r+1}^\lambda=b_{25}\) \\
			\hline
			\(\Lambda_2\) & \((0,0,1,0,0)\) &
			\((0,1,0,0\mid0,0,1,0)=b_{13}\) &
			\((0,1,0,0\mid0,0,1,0)=b_{13}\) &
			\(g_k^\lambda=b_{13}\) \\
			\hline
		\end{tabular}%
	}
\end{table}

\begin{table}[H]
	\centering
\caption{Ground-state conditions for the level-2 perfect crystal of type \(B_3^{(1)}\).}
\label{tab:B31-level-two-ground-state-conditions}	
	\resizebox{\textwidth}{!}{%
		\begin{tabular}{|c|c|c|c|c|}
			\hline
			\(\lambda\) & \((m_0,m_1,m_2,m_3)\) &
			\(b_\lambda\) & \(b^\lambda\) &
			ground-state condition \\
			\hline
			\(2\Lambda_0\) & \((2,0,0,0)\) &
			\((0,0,0\mid0\mid0,0,2)=b_{20}\) &
			\((2,0,0\mid0\mid0,0,0)=b_0\) &
			\(g_{2r}^\lambda=b_{20},\
			g_{2r+1}^\lambda=b_0\) \\
			\hline
			\(\Lambda_0+\Lambda_1\) & \((1,1,0,0)\) &
			\((1,0,0\mid0\mid0,0,1)=b_5\) &
			\((1,0,0\mid0\mid0,0,1)=b_5\) &
			\(g_k^\lambda=b_5\) \\
			\hline
			\(2\Lambda_1\) & \((0,2,0,0)\) &
			\((2,0,0\mid0\mid0,0,0)=b_0\) &
			\((0,0,0\mid0\mid0,0,2)=b_{20}\) &
			\(g_{2r}^\lambda=b_0,\
			g_{2r+1}^\lambda=b_{20}\) \\
			\hline
			\(\Lambda_0+\Lambda_3\) & \((1,0,0,1)\) &
			\((0,0,0\mid1\mid0,0,1)=b_{26}\) &
			\((1,0,0\mid1\mid0,0,0)=b_{21}\) &
			\(g_{2r}^\lambda=b_{26},\
			g_{2r+1}^\lambda=b_{21}\) \\
			\hline
			\(\Lambda_1+\Lambda_3\) & \((0,1,0,1)\) &
			\((1,0,0\mid1\mid0,0,0)=b_{21}\) &
			\((0,0,0\mid1\mid0,0,1)=b_{26}\) &
			\(g_{2r}^\lambda=b_{21},\
			g_{2r+1}^\lambda=b_{26}\) \\
			\hline
			\(2\Lambda_3\) & \((0,0,0,2)\) &
			\((0,0,1\mid0\mid1,0,0)=b_{12}\) &
			\((0,0,1\mid0\mid1,0,0)=b_{12}\) &
			\(g_k^\lambda=b_{12}\) \\
			\hline
			\(\Lambda_2\) & \((0,0,1,0)\) &
			\((0,1,0\mid0\mid0,1,0)=b_9\) &
			\((0,1,0\mid0\mid0,1,0)=b_9\) &
			\(g_k^\lambda=b_9\) \\
			\hline
		\end{tabular}%
	}
\end{table}

\vspace{-7pt}

\section{Degree matrices for level-two examples}
\label{app:level-two-degree-matrices}

\begin{table}[H]
	\centering
	\caption{The adjacent-pair degree matrices \(E(b_i,b_j)\) at level \(l=2\).}
	\label{tab:level-two-degree-matrices}
	
	\resizebox{0.82\textwidth}{!}{%
		\begin{minipage}{\textwidth}
			\centering
			
			\begin{subtable}[b]{0.25\textwidth}
				\centering
				\begingroup
				\small
				\setlength{\tabcolsep}{6pt}
				\renewcommand{\arraystretch}{1.05}
				\begin{tabular}{|c|*{3}{c|}}
					\hline
					\diagbox{\(b_i\)}{\(b_j\)}
					& \(b_0\)& \(b_1\)& \(b_2\)
					\\
					\hline
					\(b_0\)&2&1&0\\
					\hline
					\(b_1\)&1&0&1\\
					\hline
					\(b_2\)&0&1&2\\
					\hline
				\end{tabular}
				\endgroup
				\caption{Type \(A_1^{(1)}\).}
				\label{tab:A11-level-two-degree-matrix}
			\end{subtable}
			\hspace{-0.5em}%
			\begin{subtable}[b]{0.56\textwidth}
				\centering
				\begingroup
				\small
				\setlength{\tabcolsep}{5pt}
				\renewcommand{\arraystretch}{1.05}
				\begin{tabular}{|c|*{6}{c|}}
					\hline
					\diagbox{\(b_i\)}{\(b_j\)}
					& \(b_0\)& \(b_1\)& \(b_2\)& \(b_3\)& \(b_4\)& \(b_5\)
					\\
					\hline
					\(b_0\)&0&2&1&4&3&2\\
					\hline
					\(b_1\)&1&3&2&5&4&3\\
					\hline
					\(b_2\)&2&1&3&3&2&4\\
					\hline
					\(b_3\)&2&4&3&6&5&4\\
					\hline
					\(b_4\)&3&2&4&1&0&5\\
					\hline
					\(b_5\)&4&3&5&2&1&6\\
					\hline
				\end{tabular}
				\endgroup
				\caption{Type \(A_2^{(2)}\).}
				\label{tab:A22-level-two-degree-matrix}
			\end{subtable}
			
		\end{minipage}%
	}
\end{table}

\begin{table}[H]
	\centering
	\caption{The adjacent-pair degree matrix \(E(b_i,b_j)\) for \(D_3^{(2)}\) at level \(l=2\).}
	\label{tab:D32-level-two-degree-matrix}
	\begingroup
	\setlength{\tabcolsep}{2pt}
	\renewcommand{\arraystretch}{0.8}
	\resizebox{0.6\textwidth}{!}{%
		\begin{tabular}{|c|*{20}{c|}}
			\hline
			\diagbox{\(b_i\)}{\(b_j\)}
			& \(b_{0}\)& \(b_{1}\)& \(b_{2}\)& \(b_{3}\)& \(b_{4}\)
			& \(b_{5}\)& \(b_{6}\)& \(b_{7}\)& \(b_{8}\)& \(b_{9}\)
			& \(b_{10}\)& \(b_{11}\)& \(b_{12}\)& \(b_{13}\)& \(b_{14}\)
			& \(b_{15}\)& \(b_{16}\)& \(b_{17}\)& \(b_{18}\)& \(b_{19}\)
			\\
			\hline
			\(b_{0}\)&0&5&4&3&2&1&10&9&8&7&6&8&7&6&5&5&4&4&3&2\\
			\hline
			\(b_{1}\)&1&6&5&4&3&2&11&10&9&8&7&9&8&7&6&6&5&5&4&3\\
			\hline
			\(b_{2}\)&2&1&6&5&4&3&6&5&4&3&2&10&9&8&7&7&6&6&5&4\\
			\hline
			\(b_{3}\)&3&2&1&0&5&4&7&6&5&4&3&5&4&3&2&2&1&7&6&5\\
			\hline
			\(b_{4}\)&4&3&2&1&6&5&8&7&6&5&4&6&5&4&3&3&2&8&7&6\\
			\hline
			\(b_{5}\)&5&4&3&2&1&6&9&8&7&6&5&7&6&5&4&4&3&3&2&7\\
			\hline
			\(b_{6}\)&2&7&6&5&4&3&12&11&10&9&8&10&9&8&7&7&6&6&5&4\\
			\hline
			\(b_{7}\)&3&2&7&6&5&4&7&6&5&4&3&11&10&9&8&8&7&7&6&5\\
			\hline
			\(b_{8}\)&4&3&2&1&6&5&8&7&6&5&4&6&5&4&3&3&2&8&7&6\\
			\hline
			\(b_{9}\)&5&4&3&2&7&6&9&8&7&6&5&7&6&5&4&4&3&9&8&7\\
			\hline
			\(b_{10}\)&6&5&4&3&2&7&4&3&2&1&0&8&7&6&5&5&4&4&3&8\\
			\hline
			\(b_{11}\)&4&3&8&7&6&5&2&7&6&5&4&12&11&10&9&9&8&8&7&6\\
			\hline
			\(b_{12}\)&5&4&3&2&7&6&3&2&1&6&5&7&6&5&4&4&3&9&8&7\\
			\hline
			\(b_{13}\)&6&5&4&3&8&7&4&3&2&7&6&2&1&0&5&5&4&10&9&8\\
			\hline
			\(b_{14}\)&7&6&5&4&3&8&5&4&3&2&1&9&8&7&6&6&5&5&4&9\\
			\hline
			\(b_{15}\)&7&6&5&4&9&8&5&4&3&8&7&3&2&1&6&6&5&11&10&9\\
			\hline
			\(b_{16}\)&8&7&6&5&4&9&6&5&4&3&2&4&3&2&7&1&6&6&5&10\\
			\hline
			\(b_{17}\)&8&7&6&5&10&9&6&5&4&9&8&4&3&2&7&7&6&12&11&10\\
			\hline
			\(b_{18}\)&9&8&7&6&5&10&7&6&5&4&3&5&4&3&8&2&7&7&6&11\\
			\hline
			\(b_{19}\)&10&9&8&7&6&11&8&7&6&5&4&6&5&4&9&3&8&2&7&12\\
			\hline
		\end{tabular}%
	}
	\endgroup
\end{table}

\begin{table}[H]
	\centering
	\caption{The adjacent-pair degree matrix \(E(b_i,b_j)\) for \(C_2^{(1)}\) at level \(2\).}
	\label{tab:C2-1-level-two-energy-matrix}
	\begingroup
	\tiny
	\setlength{\tabcolsep}{1.2pt}
	\renewcommand{\arraystretch}{0.75}
	\resizebox{\textwidth}{!}{%
		\begin{tabular}{|c|*{46}{c|}}
			\hline
			\diagbox{\(b_i\)}{\(b_j\)}
			& \(b_{0}\)& \(b_{1}\)& \(b_{2}\)& \(b_{3}\)& \(b_{4}\)& \(b_{5}\)& \(b_{6}\)& \(b_{7}\)& \(b_{8}\)& \(b_{9}\)
			& \(b_{10}\)& \(b_{11}\)& \(b_{12}\)& \(b_{13}\)& \(b_{14}\)& \(b_{15}\)& \(b_{16}\)& \(b_{17}\)& \(b_{18}\)& \(b_{19}\)
			& \(b_{20}\)& \(b_{21}\)& \(b_{22}\)& \(b_{23}\)& \(b_{24}\)& \(b_{25}\)& \(b_{26}\)& \(b_{27}\)& \(b_{28}\)& \(b_{29}\)
			& \(b_{30}\)& \(b_{31}\)& \(b_{32}\)& \(b_{33}\)& \(b_{34}\)& \(b_{35}\)& \(b_{36}\)& \(b_{37}\)& \(b_{38}\)& \(b_{39}\)
			& \(b_{40}\)& \(b_{41}\)& \(b_{42}\)& \(b_{43}\)& \(b_{44}\)& \(b_{45}\)\\
			\hline
			\(b_{0}\)&0&7&6&5&4&5&4&3&3&2&1&14&13&12&11&12&11&10&10&9&8&11&10&9&9&8&7&8&7&6&5&10&9&8&8&7&6&7&6&5&4&6&5&4&3&2\\
			\hline
			\(b_{1}\)&1&8&7&6&5&6&5&4&4&3&2&15&14&13&12&13&12&11&11&10&9&12&11&10&10&9&8&9&8&7&6&11&10&9&9&8&7&8&7&6&5&7&6&5&4&3\\
			\hline
			\(b_{2}\)&2&5&4&3&2&7&6&5&5&4&3&12&11&10&9&10&9&8&8&7&6&9&8&7&7&6&5&6&5&4&3&12&11&10&10&9&8&9&8&7&6&8&7&6&5&4\\
			\hline
			\(b_{3}\)&3&6&5&4&3&4&3&2&6&5&4&13&12&11&10&11&10&9&9&8&7&10&9&8&8&7&6&7&6&5&4&9&8&7&7&6&5&6&5&4&3&9&8&7&6&5\\
			\hline
			\(b_{4}\)&4&3&2&1&0&5&4&3&3&2&5&10&9&8&7&8&7&6&6&5&4&7&6&5&5&4&3&4&3&2&1&10&9&8&8&7&6&7&6&5&4&6&5&4&3&6\\
			\hline
			\(b_{5}\)&3&2&5&4&3&8&7&6&6&5&4&9&8&7&6&7&6&5&5&4&3&10&9&8&8&7&6&7&6&5&4&13&12&11&11&10&9&10&9&8&7&9&8&7&6&5\\
			\hline
			\(b_{6}\)&4&3&2&5&4&1&0&3&7&6&5&10&9&8&7&8&7&6&6&5&4&7&6&5&5&4&3&8&7&6&5&6&5&4&4&3&2&3&2&1&4&10&9&8&7&6\\
			\hline
			\(b_{7}\)&5&4&3&2&1&6&5&4&4&3&6&11&10&9&8&9&8&7&7&6&5&8&7&6&6&5&4&5&4&3&2&11&10&9&9&8&7&8&7&6&5&7&6&5&4&7\\
			\hline
			\(b_{8}\)&5&4&3&6&5&2&1&4&8&7&6&11&10&9&8&9&8&7&7&6&5&8&7&6&6&5&4&9&8&7&6&7&6&5&5&4&3&4&3&2&5&11&10&9&8&7\\
			\hline
			\(b_{9}\)&6&5&4&3&2&3&2&5&5&4&7&12&11&10&9&10&9&8&8&7&6&9&8&7&7&6&5&6&5&4&3&8&7&6&6&5&4&5&4&3&6&8&7&6&5&8\\
			\hline
			\(b_{10}\)&7&6&5&4&3&4&3&6&2&5&8&13&12&11&10&11&10&9&9&8&7&10&9&8&8&7&6&7&6&5&4&9&8&7&7&6&5&6&5&4&7&5&4&3&6&9\\
			\hline
			\(b_{11}\)&2&9&8&7&6&7&6&5&5&4&3&16&15&14&13&14&13&12&12&11&10&13&12&11&11&10&9&10&9&8&7&12&11&10&10&9&8&9&8&7&6&8&7&6&5&4\\
			\hline
			\(b_{12}\)&3&6&5&4&3&8&7&6&6&5&4&13&12&11&10&11&10&9&9&8&7&10&9&8&8&7&6&7&6&5&4&13&12&11&11&10&9&10&9&8&7&9&8&7&6&5\\
			\hline
			\(b_{13}\)&4&7&6&5&4&5&4&3&7&6&5&14&13&12&11&12&11&10&10&9&8&11&10&9&9&8&7&8&7&6&5&10&9&8&8&7&6&7&6&5&4&10&9&8&7&6\\
			\hline
			\(b_{14}\)&5&4&3&2&1&6&5&4&4&3&6&11&10&9&8&9&8&7&7&6&5&8&7&6&6&5&4&5&4&3&2&11&10&9&9&8&7&8&7&6&5&7&6&5&4&7\\
			\hline
			\(b_{15}\)&4&3&6&5&4&9&8&7&7&6&5&10&9&8&7&8&7&6&6&5&4&11&10&9&9&8&7&8&7&6&5&14&13&12&12&11&10&11&10&9&8&10&9&8&7&6\\
			\hline
			\(b_{16}\)&5&4&3&6&5&2&1&4&8&7&6&11&10&9&8&9&8&7&7&6&5&8&7&6&6&5&4&9&8&7&6&7&6&5&5&4&3&4&3&2&5&11&10&9&8&7\\
			\hline
			\(b_{17}\)&6&5&4&3&2&7&6&5&5&4&7&8&7&6&5&6&5&4&4&3&2&9&8&7&7&6&5&6&5&4&3&12&11&10&10&9&8&9&8&7&6&8&7&6&5&8\\
			\hline
			\(b_{18}\)&6&5&4&7&6&3&2&5&9&8&7&12&11&10&9&10&9&8&8&7&6&9&8&7&7&6&5&10&9&8&7&8&7&6&6&5&4&5&4&3&6&12&11&10&9&8\\
			\hline
			\(b_{19}\)&7&6&5&4&3&4&3&6&6&5&8&9&8&7&6&7&6&5&5&4&3&6&5&4&4&3&2&7&6&5&4&9&8&7&7&6&5&6&5&4&7&9&8&7&6&9\\
			\hline
			\(b_{20}\)&8&7&6&5&4&5&4&7&3&6&9&6&5&4&3&4&3&2&2&1&0&7&6&5&5&4&3&4&3&2&5&10&9&8&8&7&6&7&6&5&8&6&5&4&7&10\\
			\hline
			\(b_{21}\)&5&4&7&6&5&10&9&8&8&7&6&7&6&5&4&9&8&7&7&6&5&12&11&10&10&9&8&9&8&7&6&15&14&13&13&12&11&12&11&10&9&11&10&9&8&7\\
			\hline
			\(b_{22}\)&6&5&4&7&6&3&2&5&9&8&7&8&7&6&5&6&5&4&8&7&6&5&4&3&3&2&5&10&9&8&7&8&7&6&6&5&4&5&4&3&6&12&11&10&9&8\\
			\hline
			\(b_{23}\)&7&6&5&4&3&8&7&6&6&5&8&5&4&3&2&7&6&5&5&4&3&10&9&8&8&7&6&7&6&5&4&13&12&11&11&10&9&10&9&8&7&9&8&7&6&9\\
			\hline
			\(b_{24}\)&7&6&5&8&7&4&3&6&10&9&8&9&8&7&6&7&6&5&9&8&7&6&5&4&4&3&6&11&10&9&8&5&4&3&3&2&5&6&5&4&7&13&12&11&10&9\\
			\hline
			\(b_{25}\)&8&7&6&5&4&5&4&7&7&6&9&6&5&4&3&4&3&2&6&5&4&3&2&1&1&0&3&8&7&6&5&6&5&4&4&3&6&3&2&5&8&10&9&8&7&10\\
			\hline
			\(b_{26}\)&9&8&7&6&5&6&5&8&4&7&10&7&6&5&4&5&4&3&3&2&1&8&7&6&6&5&4&5&4&3&6&11&10&9&9&8&7&8&7&6&9&7&6&5&8&11\\
			\hline
			\(b_{27}\)&8&7&6&9&8&5&4&7&11&10&9&10&9&8&7&8&7&6&10&9&8&7&6&5&5&4&7&12&11&10&9&6&5&4&4&3&6&7&6&5&8&14&13&12&11&10\\
			\hline
			\(b_{28}\)&9&8&7&6&5&6&5&8&8&7&10&7&6&5&4&5&4&3&7&6&5&4&3&2&2&1&4&9&8&7&6&7&6&5&5&4&7&4&3&6&9&11&10&9&8&11\\
			\hline
			\(b_{29}\)&10&9&8&7&6&7&6&9&5&8&11&8&7&6&5&6&5&4&4&3&2&5&4&3&3&2&5&6&5&4&7&8&7&6&6&5&8&5&4&7&10&8&7&6&9&12\\
			\hline
			\(b_{30}\)&11&10&9&8&7&8&7&10&6&9&12&9&8&7&6&7&6&5&5&4&3&6&5&4&4&3&6&3&2&5&8&9&8&7&7&6&9&6&5&8&11&5&4&7&10&13\\
			\hline
			\(b_{31}\)&6&5&8&7&6&11&10&9&9&8&7&4&7&6&5&10&9&8&8&7&6&13&12&11&11&10&9&10&9&8&7&16&15&14&14&13&12&13&12&11&10&12&11&10&9&8\\
			\hline
			\(b_{32}\)&7&6&5&8&7&4&3&6&10&9&8&5&4&7&6&3&2&5&9&8&7&6&5&4&4&3&6&11&10&9&8&9&8&7&7&6&5&6&5&4&7&13&12&11&10&9\\
			\hline
			\(b_{33}\)&8&7&6&5&4&9&8&7&7&6&9&6&5&4&3&8&7&6&6&5&4&11&10&9&9&8&7&8&7&6&5&14&13&12&12&11&10&11&10&9&8&10&9&8&7&10\\
			\hline
			\(b_{34}\)&8&7&6&9&8&5&4&7&11&10&9&6&5&8&7&4&3&6&10&9&8&3&2&5&5&4&7&12&11&10&9&2&1&4&0&3&6&7&6&5&8&14&13&12&11&10\\
			\hline
			\(b_{35}\)&9&8&7&6&5&6&5&8&8&7&10&7&6&5&4&5&4&3&7&6&5&4&3&2&2&1&4&9&8&7&6&7&6&5&5&4&7&4&3&6&9&11&10&9&8&11\\
			\hline
			\(b_{36}\)&10&9&8&7&6&7&6&9&5&8&11&8&7&6&5&6&5&4&4&3&2&9&8&7&7&6&5&6&5&4&7&12&11&10&10&9&8&9&8&7&10&8&7&6&9&12\\
			\hline
			\(b_{37}\)&9&8&7&10&9&6&5&8&12&11&10&7&6&9&8&5&4&7&11&10&9&4&3&6&6&5&8&13&12&11&10&3&2&5&1&4&7&8&7&6&9&15&14&13&12&11\\
			\hline
			\(b_{38}\)&10&9&8&7&6&7&6&9&9&8&11&8&7&6&5&6&5&4&8&7&6&5&4&3&3&2&5&10&9&8&7&4&3&6&2&5&8&5&4&7&10&12&11&10&9&12\\
			\hline
			\(b_{39}\)&11&10&9&8&7&8&7&10&6&9&12&9&8&7&6&7&6&5&5&4&3&6&5&4&4&3&6&7&6&5&8&5&4&7&3&6&9&2&5&8&11&9&8&7&10&13\\
			\hline
			\(b_{40}\)&12&11&10&9&8&9&8&11&7&10&13&10&9&8&7&8&7&6&6&5&4&7&6&5&5&4&7&4&3&6&9&10&9&8&8&7&10&7&6&9&12&6&5&8&11&14\\
			\hline
			\(b_{41}\)&10&9&8&11&10&7&6&9&13&12&11&8&7&10&9&6&5&8&12&11&10&5&4&7&7&6&9&14&13&12&11&4&3&6&2&5&8&9&8&7&10&16&15&14&13&12\\
			\hline
			\(b_{42}\)&11&10&9&8&7&8&7&10&10&9&12&9&8&7&6&7&6&5&9&8&7&6&5&4&4&3&6&11&10&9&8&5&4&7&3&6&9&6&5&8&11&13&12&11&10&13\\
			\hline
			\(b_{43}\)&12&11&10&9&8&9&8&11&7&10&13&10&9&8&7&8&7&6&6&5&4&7&6&5&5&4&7&8&7&6&9&6&5&8&4&7&10&3&6&9&12&10&9&8&11&14\\
			\hline
			\(b_{44}\)&13&12&11&10&9&10&9&12&8&11&14&11&10&9&8&9&8&7&7&6&5&8&7&6&6&5&8&5&4&7&10&7&6&9&5&8&11&4&7&10&13&7&6&9&12&15\\
			\hline
			\(b_{45}\)&14&13&12&11&10&11&10&13&9&12&15&12&11&10&9&10&9&8&8&7&6&9&8&7&7&6&9&6&5&8&11&8&7&10&6&9&12&5&8&11&14&4&7&10&13&16\\
			\hline
		\end{tabular}%
	}
	\endgroup
\end{table}

\begin{table}[H]
	\centering
	\caption{The adjacent-pair degree matrix \(E(b_i,b_j)\) for \(A_5^{(2)}\) at level \(2\).}
	\label{tab:A5-2-level-two-energy-matrix}
	\begingroup
	\setlength{\tabcolsep}{2pt}
	\renewcommand{\arraystretch}{0.8}
	\resizebox{0.5\textwidth}{!}{%
		\begin{tabular}{|c|*{21}{c|}}
			\hline
			\diagbox{\(b_i\)}{\(b_j\)}
			& \(b_{0}\)& \(b_{1}\)& \(b_{2}\)& \(b_{3}\)& \(b_{4}\)& \(b_{5}\)
			& \(b_{6}\)& \(b_{7}\)& \(b_{8}\)& \(b_{9}\)& \(b_{10}\)
			& \(b_{11}\)& \(b_{12}\)& \(b_{13}\)& \(b_{14}\)& \(b_{15}\)
			& \(b_{16}\)& \(b_{17}\)& \(b_{18}\)& \(b_{19}\)& \(b_{20}\)
			\\
			\hline
			\(b_{0}\)&10&9&8&7&6&5&8&7&6&5&4&6&5&4&3&4&3&2&2&1&0\\
			\hline
			\(b_{1}\)&6&5&4&3&2&1&9&8&7&6&5&7&6&5&4&5&4&3&3&2&1\\
			\hline
			\(b_{2}\)&7&6&5&4&3&2&5&4&3&2&1&8&7&6&5&6&5&4&4&3&2\\
			\hline
			\(b_{3}\)&8&7&6&5&4&3&6&5&4&3&2&4&3&2&1&7&6&5&5&4&3\\
			\hline
			\(b_{4}\)&9&8&7&6&5&4&7&6&5&4&3&5&4&3&2&3&2&1&6&5&4\\
			\hline
			\(b_{5}\)&5&4&3&2&1&0&8&7&6&5&4&6&5&4&3&4&3&2&2&1&5\\
			\hline
			\(b_{6}\)&2&6&5&4&3&2&10&9&8&7&6&8&7&6&5&6&5&4&4&3&2\\
			\hline
			\(b_{7}\)&3&2&6&5&4&3&6&5&4&3&2&9&8&7&6&7&6&5&5&4&3\\
			\hline
			\(b_{8}\)&4&3&2&6&5&4&7&6&5&4&3&5&4&3&2&8&7&6&6&5&4\\
			\hline
			\(b_{9}\)&5&4&3&2&6&5&3&2&1&0&4&6&5&4&3&4&3&2&7&6&5\\
			\hline
			\(b_{10}\)&1&5&4&3&2&1&9&8&7&6&5&7&6&5&4&5&4&3&3&2&6\\
			\hline
			\(b_{11}\)&4&3&7&6&5&4&2&6&5&4&3&10&9&8&7&8&7&6&6&5&4\\
			\hline
			\(b_{12}\)&5&4&3&7&6&5&3&2&6&5&4&1&0&4&3&9&8&7&7&6&5\\
			\hline
			\(b_{13}\)&6&5&4&3&7&6&4&3&2&1&5&7&6&5&4&5&4&3&8&7&6\\
			\hline
			\(b_{14}\)&2&1&5&4&3&2&5&4&3&2&6&8&7&6&5&6&5&4&4&3&7\\
			\hline
			\(b_{15}\)&6&5&4&8&7&6&4&3&7&6&5&2&1&5&4&10&9&8&8&7&6\\
			\hline
			\(b_{16}\)&7&6&5&4&8&7&5&4&3&2&6&3&2&6&5&6&5&4&9&8&7\\
			\hline
			\(b_{17}\)&3&2&1&5&4&3&6&5&4&3&7&4&3&2&6&7&6&5&5&4&8\\
			\hline
			\(b_{18}\)&8&7&6&5&9&8&6&5&4&3&7&4&3&7&6&2&6&5&10&9&8\\
			\hline
			\(b_{19}\)&4&3&2&1&5&4&7&6&5&4&8&5&4&3&7&3&2&6&6&5&9\\
			\hline
			\(b_{20}\)&0&4&3&2&1&5&8&7&6&5&9&6&5&4&8&4&3&7&2&6&10\\
			\hline
		\end{tabular}%
	}
	\endgroup
\end{table}

\begin{table}[H]
	\centering
	\caption{The adjacent-pair degree matrix \(E(b_i,b_j)\) for \(D_4^{(1)}\) at level \(2\).}
	\label{tab:D4-1-level-two-energy-matrix}
	\begingroup
	\tiny
	\setlength{\tabcolsep}{1.2pt}
	\renewcommand{\arraystretch}{0.75}
	\resizebox{0.8\textwidth}{!}{%
		\begin{tabular}{|c|*{35}{c|}}
			\hline
			\diagbox{\(b_i\)}{\(b_j\)}
			& \(b_{0}\)& \(b_{1}\)& \(b_{2}\)& \(b_{3}\)& \(b_{4}\)
			& \(b_{5}\)& \(b_{6}\)& \(b_{7}\)& \(b_{8}\)& \(b_{9}\)
			& \(b_{10}\)& \(b_{11}\)& \(b_{12}\)& \(b_{13}\)& \(b_{14}\)
			& \(b_{15}\)& \(b_{16}\)& \(b_{17}\)& \(b_{18}\)& \(b_{19}\)
			& \(b_{20}\)& \(b_{21}\)& \(b_{22}\)& \(b_{23}\)& \(b_{24}\)
			& \(b_{25}\)& \(b_{26}\)& \(b_{27}\)& \(b_{28}\)& \(b_{29}\)
			& \(b_{30}\)& \(b_{31}\)& \(b_{32}\)& \(b_{33}\)& \(b_{34}\)
			\\
			\hline
			\(b_{0}\)&12&11&10&9&9&8&7&6&10&9&8&8&7&6&5&8&7&7&6&5&4&6&5&4&3&6&5&4&3&4&3&2&2&1&0\\
			\hline
			\(b_{1}\)&7&6&5&4&4&3&2&1&11&10&9&9&8&7&6&9&8&8&7&6&5&7&6&5&4&7&6&5&4&5&4&3&3&2&1\\
			\hline
			\(b_{2}\)&8&7&6&5&5&4&3&2&6&5&4&4&3&2&1&10&9&9&8&7&6&8&7&6&5&8&7&6&5&6&5&4&4&3&2\\
			\hline
			\(b_{3}\)&9&8&7&6&6&5&4&3&7&6&5&5&4&3&2&5&4&4&3&2&1&9&8&7&6&3&2&1&0&7&6&5&5&4&3\\
			\hline
			\(b_{4}\)&9&8&7&6&6&5&4&3&7&6&5&5&4&3&2&5&4&4&3&2&1&3&2&1&0&9&8&7&6&7&6&5&5&4&3\\
			\hline
			\(b_{5}\)&10&9&8&7&7&6&5&4&8&7&6&6&5&4&3&6&5&5&4&3&2&4&3&2&1&4&3&2&1&8&7&6&6&5&4\\
			\hline
			\(b_{6}\)&11&10&9&8&8&7&6&5&9&8&7&7&6&5&4&7&6&6&5&4&3&5&4&3&2&5&4&3&2&3&2&1&7&6&5\\
			\hline
			\(b_{7}\)&6&5&4&3&3&2&1&0&10&9&8&8&7&6&5&8&7&7&6&5&4&6&5&4&3&6&5&4&3&4&3&2&2&1&6\\
			\hline
			\(b_{8}\)&2&7&6&5&5&4&3&2&12&11&10&10&9&8&7&10&9&9&8&7&6&8&7&6&5&8&7&6&5&6&5&4&4&3&2\\
			\hline
			\(b_{9}\)&3&2&7&6&6&5&4&3&7&6&5&5&4&3&2&11&10&10&9&8&7&9&8&7&6&9&8&7&6&7&6&5&5&4&3\\
			\hline
			\(b_{10}\)&4&3&2&7&1&6&5&4&8&7&6&6&5&4&3&6&5&5&4&3&2&10&9&8&7&4&3&2&1&8&7&6&6&5&4\\
			\hline
			\(b_{11}\)&4&3&2&1&7&6&5&4&8&7&6&6&5&4&3&6&5&5&4&3&2&4&3&2&1&10&9&8&7&8&7&6&6&5&4\\
			\hline
			\(b_{12}\)&5&4&3&2&2&7&6&5&9&8&7&7&6&5&4&7&6&6&5&4&3&5&4&3&2&5&4&3&2&9&8&7&7&6&5\\
			\hline
			\(b_{13}\)&6&5&4&3&3&2&7&6&4&3&2&2&1&0&5&8&7&7&6&5&4&6&5&4&3&6&5&4&3&4&3&2&8&7&6\\
			\hline
			\(b_{14}\)&1&6&5&4&4&3&2&1&11&10&9&9&8&7&6&9&8&8&7&6&5&7&6&5&4&7&6&5&4&5&4&3&3&2&7\\
			\hline
			\(b_{15}\)&4&3&8&7&7&6&5&4&2&7&6&6&5&4&3&12&11&11&10&9&8&10&9&8&7&10&9&8&7&8&7&6&6&5&4\\
			\hline
			\(b_{16}\)&5&4&3&8&2&7&6&5&3&2&7&1&6&5&4&7&6&6&5&4&3&11&10&9&8&5&4&3&2&9&8&7&7&6&5\\
			\hline
			\(b_{17}\)&5&4&3&2&8&7&6&5&3&2&1&7&6&5&4&7&6&6&5&4&3&5&4&3&2&11&10&9&8&9&8&7&7&6&5\\
			\hline
			\(b_{18}\)&6&5&4&3&3&8&7&6&4&3&2&2&7&6&5&2&1&1&0&5&4&6&5&4&3&6&5&4&3&10&9&8&8&7&6\\
			\hline
			\(b_{19}\)&7&6&5&4&4&3&8&7&5&4&3&3&2&1&6&9&8&8&7&6&5&7&6&5&4&7&6&5&4&5&4&3&9&8&7\\
			\hline
			\(b_{20}\)&2&1&6&5&5&4&3&2&6&5&4&4&3&2&7&10&9&9&8&7&6&8&7&6&5&8&7&6&5&6&5&4&4&3&8\\
			\hline
			\(b_{21}\)&6&5&4&9&3&8&7&6&4&3&8&2&7&6&5&2&7&1&6&5&4&12&11&10&9&0&5&4&3&10&9&8&8&7&6\\
			\hline
			\(b_{22}\)&7&6&5&4&4&9&8&7&5&4&3&3&8&7&6&3&2&2&1&6&5&7&6&5&4&1&6&5&4&11&10&9&9&8&7\\
			\hline
			\(b_{23}\)&8&7&6&5&5&4&9&8&6&5&4&4&3&2&7&4&3&3&2&7&6&8&7&6&5&2&1&6&5&6&5&4&10&9&8\\
			\hline
			\(b_{24}\)&3&2&1&6&0&5&4&3&7&6&5&5&4&3&8&5&4&4&3&2&7&9&8&7&6&3&2&1&6&7&6&5&5&4&9\\
			\hline
			\(b_{25}\)&6&5&4&3&9&8&7&6&4&3&2&8&7&6&5&2&1&7&6&5&4&0&5&4&3&12&11&10&9&10&9&8&8&7&6\\
			\hline
			\(b_{26}\)&7&6&5&4&4&9&8&7&5&4&3&3&8&7&6&3&2&2&1&6&5&1&6&5&4&7&6&5&4&11&10&9&9&8&7\\
			\hline
			\(b_{27}\)&8&7&6&5&5&4&9&8&6&5&4&4&3&2&7&4&3&3&2&7&6&2&1&6&5&8&7&6&5&6&5&4&10&9&8\\
			\hline
			\(b_{28}\)&3&2&1&0&6&5&4&3&7&6&5&5&4&3&8&5&4&4&3&2&7&3&2&1&6&9&8&7&6&7&6&5&5&4&9\\
			\hline
			\(b_{29}\)&8&7&6&5&5&10&9&8&6&5&4&4&9&8&7&4&3&3&2&7&6&2&7&6&5&2&7&6&5&12&11&10&10&9&8\\
			\hline
			\(b_{30}\)&9&8&7&6&6&5&10&9&7&6&5&5&4&3&8&5&4&4&3&8&7&3&2&7&6&3&2&7&6&7&6&5&11&10&9\\
			\hline
			\(b_{31}\)&4&3&2&1&1&6&5&4&8&7&6&6&5&4&9&6&5&5&4&3&8&4&3&2&7&4&3&2&7&8&7&6&6&5&10\\
			\hline
			\(b_{32}\)&10&9&8&7&7&6&11&10&8&7&6&6&5&4&9&6&5&5&4&9&8&4&3&8&7&4&3&8&7&2&7&6&12&11&10\\
			\hline
			\(b_{33}\)&5&4&3&2&2&1&6&5&9&8&7&7&6&5&10&7&6&6&5&4&9&5&4&3&8&5&4&3&8&3&2&7&7&6&11\\
			\hline
			\(b_{34}\)&0&5&4&3&3&2&1&6&10&9&8&8&7&6&11&8&7&7&6&5&10&6&5&4&9&6&5&4&9&4&3&8&2&7&12\\
			\hline
		\end{tabular}%
	}
	\endgroup
\end{table}

\begin{table}[H]
	\centering
	\caption{The adjacent-pair degree matrix \(E(b_i,b_j)\) for \(B_3^{(1)}\) at level \(2\).}
	\label{tab:B3-1-level-two-energy-matrix}
	\begingroup
	\setlength{\tabcolsep}{1.6pt}
	\renewcommand{\arraystretch}{0.75}
	\resizebox{0.7\textwidth}{!}{%
		\begin{tabular}{|c|*{27}{c|}}
			\hline
			\diagbox{\(b_i\)}{\(b_j\)}
			& \(b_0\)& \(b_1\)& \(b_2\)& \(b_3\)& \(b_4\)& \(b_5\)
			& \(b_6\)& \(b_7\)& \(b_8\)& \(b_9\)& \(b_{10}\)
			& \(b_{11}\)& \(b_{12}\)& \(b_{13}\)& \(b_{14}\)
			& \(b_{15}\)& \(b_{16}\)& \(b_{17}\)& \(b_{18}\)& \(b_{19}\)& \(b_{20}\)
			& \(b_{21}\)& \(b_{22}\)& \(b_{23}\)& \(b_{24}\)& \(b_{25}\)& \(b_{26}\)
			\\
			\hline
			\(b_0\)&12&11&10&8&7&6&10&9&7&6&5&8&6&5&4&4&3&2&2&1&0&9&8&7&5&4&3\\
			\hline
			\(b_1\)&7&6&5&3&2&1&11&10&8&7&6&9&7&6&5&5&4&3&3&2&1&4&9&8&6&5&4\\
			\hline
			\(b_2\)&8&7&6&4&3&2&6&5&3&2&1&10&8&7&6&6&5&4&4&3&2&5&4&9&7&6&5\\
			\hline
			\(b_3\)&10&9&8&6&5&4&8&7&5&4&3&6&4&3&2&8&7&6&6&5&4&7&6&5&3&2&1\\
			\hline
			\(b_4\)&11&10&9&7&6&5&9&8&6&5&4&7&5&4&3&3&2&1&7&6&5&8&7&6&4&3&2\\
			\hline
			\(b_5\)&6&5&4&2&1&0&10&9&7&6&5&8&6&5&4&4&3&2&2&1&6&3&8&7&5&4&3\\
			\hline
			\(b_6\)&2&7&6&4&3&2&12&11&9&8&7&10&8&7&6&6&5&4&4&3&2&5&10&9&7&6&5\\
			\hline
			\(b_7\)&3&2&7&5&4&3&7&6&4&3&2&11&9&8&7&7&6&5&5&4&3&6&5&10&8&7&6\\
			\hline
			\(b_8\)&5&4&3&7&6&5&9&8&6&5&4&7&5&4&3&9&8&7&7&6&5&2&7&6&4&3&2\\
			\hline
			\(b_9\)&6&5&4&2&7&6&4&3&1&0&5&8&6&5&4&4&3&2&8&7&6&3&2&7&5&4&3\\
			\hline
			\(b_{10}\)&1&6&5&3&2&1&11&10&8&7&6&9&7&6&5&5&4&3&3&2&7&4&9&8&6&5&4\\
			\hline
			\(b_{11}\)&4&3&8&6&5&4&2&7&5&4&3&12&10&9&8&8&7&6&6&5&4&7&6&11&9&8&7\\
			\hline
			\(b_{12}\)&6&5&4&8&7&6&4&3&7&6&5&2&0&5&4&10&9&8&8&7&6&3&2&1&5&4&3\\
			\hline
			\(b_{13}\)&7&6&5&3&8&7&5&4&2&1&6&9&7&6&5&5&4&3&9&8&7&4&3&8&6&5&4\\
			\hline
			\(b_{14}\)&2&1&6&4&3&2&6&5&3&2&7&10&8&7&6&6&5&4&4&3&8&5&4&9&7&6&5\\
			\hline
			\(b_{15}\)&8&7&6&10&9&8&6&5&9&8&7&4&2&7&6&12&11&10&10&9&8&5&4&3&7&6&5\\
			\hline
			\(b_{16}\)&9&8&7&5&10&9&7&6&4&3&8&5&3&8&7&7&6&5&11&10&9&6&5&4&2&7&6\\
			\hline
			\(b_{17}\)&4&3&2&6&5&4&8&7&5&4&9&6&4&3&8&8&7&6&6&5&10&1&6&5&3&2&7\\
			\hline
			\(b_{18}\)&10&9&8&6&11&10&8&7&5&4&9&6&4&9&8&2&7&6&12&11&10&7&6&5&3&8&7\\
			\hline
			\(b_{19}\)&5&4&3&1&6&5&9&8&6&5&10&7&5&4&9&3&2&7&7&6&11&2&7&6&4&3&8\\
			\hline
			\(b_{20}\)&0&5&4&2&1&6&10&9&7&6&11&8&6&5&10&4&3&8&2&7&12&3&8&7&5&4&9\\
			\hline
			\(b_{21}\)&9&8&7&5&4&3&7&6&4&3&2&5&3&2&1&7&6&5&5&4&3&6&5&4&2&1&0\\
			\hline
			\(b_{22}\)&4&3&2&6&5&4&8&7&5&4&3&6&4&3&2&8&7&6&6&5&4&1&6&5&3&2&1\\
			\hline
			\(b_{23}\)&5&4&3&7&6&5&3&2&6&5&4&7&5&4&3&9&8&7&7&6&5&2&1&6&4&3&2\\
			\hline
			\(b_{24}\)&7&6&5&9&8&7&5&4&8&7&6&3&1&6&5&11&10&9&9&8&7&4&3&2&6&5&4\\
			\hline
			\(b_{25}\)&8&7&6&4&9&8&6&5&3&2&7&4&2&7&6&6&5&4&10&9&8&5&4&3&1&6&5\\
			\hline
			\(b_{26}\)&3&2&1&5&4&3&7&6&4&3&8&5&3&2&7&7&6&5&5&4&9&0&5&4&2&1&6\\
			\hline
		\end{tabular}%
	}
	\endgroup
\end{table}

\section*{Statements and Declarations}

\noindent\textbf{Competing interests.}
The author declares that there are no competing interests.



\end{document}